\numberwithin{equation}{section} 
\newtheorem{theorem}{\bf Theorem}[section]
\newtheorem{example}{\bf Example}[section]
\newtheorem{remark}{\bf Remark}[section]
\newtheorem{lemma}{\bf Lemma}[section]
\newtheorem{proposition}{\bf Proposition}[section]
\newcommand{\norm}[1]{\left\lVert #1\right\rVert}
\newsavebox{\savepar}
\begin{document}
\title{ On Kirchhoff's Model of Parabolic Type}
\author{ Sudeep Kundu, Amiya K. Pani\\
Department of Mathematics\\
IIT Bombay, Powai, Mumbai-400076 (India)\\
Email: sudeep.kundu85@gmail.com, akp@math.iitb.ac.in\\
and\\
Morrakot Khebchareon\\
Department of Mathematics, Faculty of Science\\ 
Chiang Mai University, Chiang Mai-50200 (Thailand)\\
Email:{mkhebcha@gmail.com}.}

\maketitle
\abstract{ In this paper,
existence of a  strong global solution for all finite time is derived for the 
Kirchhoff's model of parabolic type. 
Based on exponential weight function, some new regularity results which reflect the exponential decay property 
are obtained for the exact solution. For the related dynamics, existence of a global attractor is shown to hold for 
the problem, when the non-homogeneous  forcing function is either 
independent of time or in  $L^{\infty} (L^2).$  With finite element Galerkin method applied in spatial
direction keeping time variable continuous, a semidiscrete scheme is analyzed and it is, further, established
that the semi-discrete system has a global discrete attractor.
 Optimal error estimates in $L^\infty(H^1_0)$-norm are derived which are valid uniformly in time. Further, 
 based on a Backward Euler method, a completely discrete scheme is developed and error estimates are 
 derived. It is further observed that in case $f=0$ or $f=O(e^{-\gamma_0 t})$ with $\gamma_0>0$, the discrete solutions and also error estimates decay exponentially. Finally, some numerical experiments are discussed 
 which confirm our theoretical findings.}
\section{Inroduction.}
In this article, we consider the following nonlocal nonlinear boundary value problem of Kirchhoff's model of parabolic type: 
Find $u=u(x,t)$, $x\in \Omega$ and $t>0$ which satisfies 
 \begin{align}
  u_t-\left(1+\norm{\nabla u(t)}^2_{L^2(\Omega)}\right)\Delta u &=f \qquad\text{in }\Omega\times(0,\infty)\label{eq1},\\
  u(x,t) &=0 \qquad\text{on }\partial\Omega\times(0,\infty)\label{eq2},\\
  u(x,0) &=u_0 \qquad\text{in }\Omega\label{eq3},
 \end{align} 
 where $\Omega$ is a convex polygonal or polyhedral domain in $\mathbb{R}^d(d=2\hspace{0.1cm}\text{or}\hspace{0.1cm} 3)$ with boundary $\partial\Omega$, $f=f(x,t)$ and $u_0$ are given functions in 
 their respective domain of definitions. Here, $u_t=\frac{\partial u}{\partial t}$ and $\norm{.}_{L^2(\Omega)}$ the $L^2$-norm. Such problem arises in the model describing the evolution of a population density 
 subjected to a diffusion rate proportional to $(1+\norm{\nabla u(t)}^2)$ with   
 the forcing function $f$ representing the rate of supply.  
For details of the physical application and its complete mathematical modelling of such type of problems, 
we refer to \cite{chipot0} and \cite{Alves}.
More general nonlinear parabolic equations with nonlocal terms is of the form
 \begin{equation}\label{extra1}
  u_t-a(\norm{\nabla u}^2_{L^2(\Omega)})\Delta u=f(x,t), \quad (x,t)\in\Omega\times(0,\infty),
 \end{equation}
 where $a$ is a nonlinear nonlocal form in $u$, which includes the problem \eqref{eq1} are considered in the literature, see 
  \cite{chipot1}, \cite{chipot2}, \cite{chipot3} and \cite{chipot4}. In these articles, the focus is on proving well-posedness and on the study 
  of asymptotic behavior of solutions of the nonlocal problem \eqref{extra1}-\eqref{eq3} under various conditions on the nonlinearity.

 In recent years, numerical approximation to the stationary problem of \eqref{eq1}-\eqref{eq3} has been studied in \cite{gudi} using $C^0$-conforming finite element method and optimal error estimates in $H^1$ are derived. 
 However, there is hardly any result on  the numerical approximations to 
\eqref{eq1}-\eqref{eq3}. When the forcing function $f$ is either independent of time or is in $L^\infty(L^2)$, 
it plays a crucial role in the dynamics of this problem, therefore, in this paper,  
global existence of a unique strong  solution to the problem \eqref{eq1}-\eqref{eq3} for all $t\in [0,T]$ with any finite positive $T>0$ is proved using Bubnov Galerkin method and compactness arguments. New regularity results 
using exponential weight function are also established. As a consequence,  this problem admits the 
existence of a global attractor both in $L^2$ and $H^1_0$-spaces.
When $C^0$- conforming finite element method is applied to approximate the solution of \eqref{eq1} 
keeping time variable continuous, a semidiscrete scheme is derived and it is shown that the discrete problem 
has a discrete global attractor. Further, optimal  {\it {priori}} error estimates in $L^{\infty}(H^1)$-norm
are established which are even valid uniformly in time. Then 
based on backward Euler method, a discrete scheme is analyzed and it is, further, shown that 
the discrete problem has a solution using a variant of Brouwer fixed point argument. Moreover,
optimal error estimates are derived.  When either $f=0$ or $f=O(e^{-\gamma_0 t})$ with some 
$\gamma_0 >0,$ exponential decay property for the exact as well as  the discrete solution and for 
error estimates is shown to hold. Finally some numerical experiments are conducted which confirm our theoretical
results.

The main contributions of this article are to
\begin{itemize}
\item derive regularity results using exponential weight functions and establish
global existence of a unique strong solution to problem \eqref{eq1}-\eqref{eq3}.
\item prove optimal error estimates of the  semidiscrete Galerkin approximation, which are valid 
uniformly in time and with right kind of regularity for  the problem with convex polygonal or polyhedral domains.
\item show the existence of a global attractor in both continuous and semidiscrete cases.
\item prove exponential decay property for the exact solution, discrete solution and even for the error
when the forcing function is either zero or of decaying exponentially in time.
\item provide error analysis for the completely discrete scheme which is based on backward Euler method without
using discrete Gronwall's inequality.
\end{itemize}
%When the forcing function is either independent of time or in $L^{\infty}(L^2),$ exponential 

The rest of the article is organized as follows. Section $2$ is devoted to the global existence and uniqueness of strong solution and new a {\it{priori}} bounds for the solutions of \eqref{eq1}-\eqref{eq3} are derived. Section $3$ deals with error estimates for the semidiscrete solutions. Section $4$, focuses on the existence and uniqueness of the discrete solution and error estimates. Finally, in section $5$ some numerical results 
are discussed to confirm theoretical results.

\section{Global Existence, Uniqueness and Regularity Results}
 This section deals with weak formulation, existence of unique global solution and some regularity results. Denote by $H^m(\Omega)$ as the standard Sobolev spaces with norm $\norm{.}_m.$ \\
 Set ${H^1_0(\Omega)}=\{v\in H^1(\Omega): v=0 \quad \text{on} \quad \partial\Omega\}$. Let $H^{-1}$ be the dual space of $H^1_0(\Omega)$.
The space $L^p([0,T];X)$  $1\leq p\leq\infty, $ consists of all strongly measurable functions $v:[0,T] \rightarrow X $ with norm
 $$\norm{v}_{L^p([0,T];X)}:=\left(\int_{0}^{T}\norm{v(t)}^p dt\right)^\frac{1}{p}<\infty \quad \text{for} \quad 1\leq p<\infty,$$ and 
% $$\norm{u}_{L^\infty([0,T];X)}:=\operatorname{ess}\sup\limits_{0\leq t\leq T}\norm{u(t)}$$
 $$\norm{v}_{L^\infty([0,T];X)}:=\operatorname*{ess\,sup}\limits_{0\leq t\leq T}\norm{v(t)}<\infty.$$
 The weak formulation related to the problem \eqref{eq1}-\eqref{eq3} is to seek $v(t)\in H^1_0(\Omega), $ $t>0$ such that 
 \begin{equation}\label{temp1}
 (u_t,v)+\left(1+\norm{\nabla u}^2_{L^2(\Omega)}\right)(\nabla u,\nabla v)=(f,v) \qquad \forall v\in H^1_0(\Omega)\qquad\text{with }  u(0)=u_0.
 \end{equation}
 \subsection{A priori bounds}
 This subsection focuses on {\it{a priori}} bounds for the problem \eqref{temp1} which are valid uniformly in time
  using exponential weight functions in time.
\begin{lemma}\label{2.1}
 Assume that $f\in L^\infty(H^{-1})$ and $u_0\in L^2(\Omega).$ Then, there holds for $ 0<\alpha<\dfrac{\lambda_1}{2}$
  \begin{align}
   \norm{u(t)}^2+\beta& e^{-2\alpha t}\int_{0}^{t} e^{2\alpha s}\norm{\nabla u(s)}^2 ds
   +2e^{-2\alpha t}\int_{0}^{t} e^{2\alpha s}\norm{\nabla u(s)}^4ds\notag\\
   &\leq e^{-2\alpha t}\norm{u_0}^2+\frac{1}{2\alpha}\norm{f}^2_{L^\infty(H^{-1})}(1-e^{-2\alpha t})=\widehat{K}_0(t)\label{temp2}\\
   &\leq\norm{u_0}^2+\frac{1}{2\alpha}\norm{f}^2_{L^\infty(H^{-1})}=K_0,\notag
  \end{align}
  where $\beta=(1-\dfrac{2\alpha}{\lambda_1})>0$, and $\lambda_1>0$ is the minimum eigenvalue of the Dirichlet eigenvalue problem for the Laplace operator.
\end{lemma}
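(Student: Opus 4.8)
The plan is to obtain the bound from the natural energy identity for \eqref{temp1}, weighted by the exponential factor $e^{2\alpha t}$. Working first on the finite-dimensional Bubnov--Galerkin system (so that $u_t$ is an admissible test function and every manipulation below is legitimate), choosing $v=u(t)$ gives
\[
\frac12\frac{d}{dt}\norm{u}^2 + \norm{\nabla u}^2 + \norm{\nabla u}^4 = (f,u).
\]
Multiplying by $2e^{2\alpha t}$ and using $e^{2\alpha t}\tfrac{d}{dt}\norm{u}^2=\tfrac{d}{dt}\!\big(e^{2\alpha t}\norm{u}^2\big)-2\alpha e^{2\alpha t}\norm{u}^2$, this becomes
\[
\frac{d}{dt}\Big(e^{2\alpha t}\norm{u}^2\Big) - 2\alpha e^{2\alpha t}\norm{u}^2 + 2e^{2\alpha t}\norm{\nabla u}^2 + 2e^{2\alpha t}\norm{\nabla u}^4 = 2e^{2\alpha t}(f,u).
\]

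The indefinite term $-2\alpha e^{2\alpha t}\norm{u}^2$ is controlled by the Poincar\'e--Friedrichs inequality $\lambda_1\norm{u}^2\le\norm{\nabla u}^2$, at the cost of $\tfrac{2\alpha}{\lambda_1}e^{2\alpha t}\norm{\nabla u}^2$, while the right-hand side is estimated by duality and Young's inequality, $2(f,u)\le 2\norm{f}_{H^{-1}}\norm{\nabla u}\le\norm{\nabla u}^2+\norm{f}_{H^{-1}}^2$. Since $0<\alpha<\lambda_1/2$ forces $\beta=1-\tfrac{2\alpha}{\lambda_1}>0$, the three gradient contributions combine as $\big(2-\tfrac{2\alpha}{\lambda_1}-1\big)\norm{\nabla u}^2=\beta\norm{\nabla u}^2$, leaving
\[
\frac{d}{dt}\Big(e^{2\alpha t}\norm{u}^2\Big) + \beta e^{2\alpha t}\norm{\nabla u}^2 + 2e^{2\alpha t}\norm{\nabla u}^4 \le e^{2\alpha t}\norm{f}_{L^\infty(H^{-1})}^2.
\]
Integrating over $(0,t)$, evaluating $\int_0^t e^{2\alpha s}\,ds=(e^{2\alpha t}-1)/2\alpha$, and multiplying by $e^{-2\alpha t}$ yields exactly the bound by $\widehat K_0(t)$ in \eqref{temp2}; the bound by $K_0$ then follows from $e^{-2\alpha t}\le1$ and $1-e^{-2\alpha t}\le1$.

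Finally, to promote this to the weak (strong) solution one passes to the limit in the Galerkin approximation, retaining the left-hand side by weak lower semicontinuity of the $L^2$ and $L^4(0,t;L^2)$-type norms; no Gronwall argument is needed. I expect the only real care to be in the bookkeeping of constants --- in particular, using the multiplier $2e^{2\alpha t}$ (rather than $e^{2\alpha t}$) so that the Young leftover $\norm{\nabla u}^2$ and the Poincar\'e penalty cancel against the available $2\norm{\nabla u}^2$ to produce precisely the coefficient $\beta$, and keeping the full term $2e^{2\alpha t}\norm{\nabla u}^4$ on the left throughout rather than discarding it as nonnegative.
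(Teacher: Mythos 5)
Your proof is correct and follows essentially the same route as the paper: test with $u$ weighted by $e^{2\alpha t}$ (the paper takes $v=e^{2\alpha t}u$ directly, which is the same computation), absorb the $\alpha$-term via Poincar\'e, split $(f,u)$ by duality and Young, and integrate before multiplying by $e^{-2\alpha t}$; the constants come out identically to $\beta$, $2$, and $\tfrac{1}{2\alpha}$. Your added remark about carrying out the estimate at the Galerkin level and passing to the limit by weak lower semicontinuity is a small extra point of rigor the paper leaves implicit, but it does not change the argument.
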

\begin{proof}
 Set $v=e^{2\alpha t}u$ in \eqref{temp1} and obtain
  \begin{align*}
 \frac{1}{2}\frac{d}{dt}(e^{2\alpha t}\norm{u(t)}^2)-\alpha e^{2\alpha t}\norm{u(t)}^2+e^{2\alpha t}(1+\norm{\nabla u(t)}^2)\norm{\nabla u}^2
 &\leq e^{2 \alpha t}\norm{f}_{H^{-1}}\norm{\nabla u}\\
 &\leq\frac{1}{2}e^{2\alpha t}\norm{f}^2_{-1}+\frac{1}{2}e^{2\alpha t}\norm{\nabla u}^2.
  \end{align*}
  Apply Poincare's inequality : $\norm{\varphi}\leq\dfrac{1}{\sqrt{\lambda_1}}\norm{\nabla\varphi}$ for $\varphi \in H^1_0$, where $\lambda_1$ is 
  the minimum eigenvalue of the Laplace operator with Dirichlet boundary condition. Thus, on integration with respect to time and using 
  $ab\leq \frac{a^2}{2}+\frac{b^2}{2}$, we obtain
   \begin{align*}
    e^{2\alpha t}\norm{u(t)}^2+ \left(1-\dfrac{2\alpha}{\lambda_1}\right)\int_{0}^{t}e^{2\alpha s}\norm{\nabla u(s)}^2 ds
    &+2\int_{0}^{t}e^{2\alpha s}\norm{\nabla u(s)}^4ds \\
    &\leq \norm{u_0}^2+\frac{1}{2\alpha}(e^{2\alpha t}-1)\norm{f}^2_{L^\infty(H^{-1})}.
 \end{align*}
 Since $\alpha$ can be chosen so that $0<\alpha<\dfrac{\lambda_1}{2}$ and $\beta=(1-\dfrac{2\alpha}{\lambda_1})>0$. Then, it follows that
 \begin{align*}
   \norm{u(t)}^2+\beta e^{-2\alpha t}\int_{0}^{t} e^{2\alpha s}\norm{\nabla u(s)}^2 ds
  & +2e^{-2\alpha t}\int_{0}^{t} e^{2\alpha s}\norm{\nabla u(s)}^4ds\\
   &\leq e^{-2\alpha t}\norm{u_0}^2+\frac{1}{2\alpha}\norm{f}^2_{L^\infty(H^{-1})}(1-e^{-2\alpha t}),
 \end{align*}
 and this concludes the rest of the proof.
\end{proof}
\begin{remark}
 If $f\in{L^\infty(L^2)}$, then rewrite using Poincare's inequality
\begin{align*}
e^{2\alpha t}(f,u)\leq e^{2\alpha t}\norm{f}_{L^\infty(L^2)}\norm{u}&\leq \dfrac{e^{2\alpha t}}{\sqrt{\lambda_1}}\norm{f}_{L^\infty(L^2)}\norm{\nabla u}\\
&\leq \dfrac{e^{2\alpha t}}{2\lambda_1}\norm{f}^2_{L^\infty(L^2)}+\frac{1}{2}\norm{\nabla u}^2
 \end{align*}
Following the part of Lemma \ref{2.1}, it now follows that 
\begin{equation}\label{rm1}
 \norm{u(t)}\leq e^{-2\alpha t}\norm{u_0}^2+\dfrac{1}{2\lambda_1\alpha}\norm{f}^2_{L^\infty(L^2)}(1-e^{-2\alpha t}).
\end{equation}
\end{remark}

\begin{lemma}\label{2.2}
 Let $u_0\in H^1_0(\Omega)$ and $f\in L^\infty(L^2)$. Then for $0<\alpha<\dfrac{\lambda_1}{2}$, there holds:
 \begin{align}
  \norm{\nabla u(t)}^2+\beta& e^{-2\alpha t}\int_{0}^{t} e^{2\alpha s}\norm{\Delta u(s)}^2 ds
  +2e^{-2\alpha t}\int_{0}^{t}e^{2\alpha s}\norm{\nabla u(s)}^2\norm{\Delta u(s)}^2ds\notag\\
  &\leq e^{-2\alpha t}\norm{\nabla u_0}^2+\frac{1}{2\alpha}\norm{f}^2_{L^\infty(L^2)}(1-e^{-2\alpha t})=\widehat K_1(t)\label{temp3}\\
  &\leq\norm{\nabla u_0}^2+\frac{1}{2\alpha}\norm{f}^2_{L^\infty(L^2)}=K_1.\notag
 \end{align}
\end{lemma}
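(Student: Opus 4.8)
The plan is to run the weighted-energy argument of Lemma \ref{2.1} again, but now testing \eqref{temp1} against a multiple of $-\Delta u$ instead of $u$, so that $\norm{\nabla u}$ and $\norm{\Delta u}$ take over the roles played there by $\norm{u}$ and $\norm{\nabla u}$. Formally, put $v=-e^{2\alpha t}\Delta u$ in \eqref{temp1}; using that $u_t$ vanishes on $\partial\Omega$ (so $(u_t,-\Delta u)=(\nabla u_t,\nabla u)=\tfrac12\frac{d}{dt}\norm{\nabla u}^2$), that $(\nabla u,-\nabla\Delta u)=\norm{\Delta u}^2$, and Cauchy--Schwarz together with Young's inequality on $(f,-\Delta u)$, one obtains
\begin{equation*}
\tfrac12\frac{d}{dt}\big(e^{2\alpha t}\norm{\nabla u}^2\big)-\alpha e^{2\alpha t}\norm{\nabla u}^2+e^{2\alpha t}\big(1+\norm{\nabla u}^2\big)\norm{\Delta u}^2\le \tfrac12 e^{2\alpha t}\norm{f}^2+\tfrac12 e^{2\alpha t}\norm{\Delta u}^2 .
\end{equation*}

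Next I would absorb the indefinite term $-\alpha e^{2\alpha t}\norm{\nabla u}^2$ using the bound $\norm{\nabla u}^2\le \lambda_1^{-1}\norm{\Delta u}^2$, which follows from $\norm{\nabla u}^2=(-\Delta u,u)\le\norm{\Delta u}\,\norm{u}\le\lambda_1^{-1/2}\norm{\Delta u}\,\norm{\nabla u}$. Moving it to the right and combining with the Young term leaves the net coefficient $1-\tfrac12-\alpha/\lambda_1=\tfrac\beta2$ in front of $e^{2\alpha t}\norm{\Delta u}^2$ on the left, with $\beta=1-2\alpha/\lambda_1>0$ since $0<\alpha<\lambda_1/2$; thus
\begin{equation*}
\tfrac12\frac{d}{dt}\big(e^{2\alpha t}\norm{\nabla u}^2\big)+\tfrac\beta2 e^{2\alpha t}\norm{\Delta u}^2+e^{2\alpha t}\norm{\nabla u}^2\norm{\Delta u}^2\le\tfrac12 e^{2\alpha t}\norm{f}^2 .
\end{equation*}
Multiplying by $2$, integrating over $(0,t)$, bounding $\int_0^t e^{2\alpha s}\norm{f(s)}^2\,ds\le\tfrac{1}{2\alpha}(e^{2\alpha t}-1)\norm{f}^2_{L^\infty(L^2)}$, and finally multiplying through by $e^{-2\alpha t}$ yields precisely \eqref{temp3}; estimating $e^{-2\alpha t}\le1$ and $1-e^{-2\alpha t}\le1$ gives the $t$-independent bound $K_1$.

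The only delicate point — and the main obstacle — is that $-\Delta u$ is not an admissible test function in $H^1_0(\Omega)$, so the computation above is merely formal. I would make it rigorous by carrying it out on the Bubnov--Galerkin approximations $u_m$ built from the eigenfunctions of the Dirichlet Laplacian: for those, $-\Delta u_m$ lies in the approximating subspace, every step above is legitimate, and the resulting estimate is uniform in $m$. Since $\Omega$ is convex polygonal (polyhedral), elliptic regularity guarantees $u\in H^2(\Omega)$ and that $w\mapsto\norm{\Delta w}$ is a norm on $H^1_0\cap H^2$ equivalent to the $H^2$-norm, so one may pass to the limit — using weak/weak-$*$ lower semicontinuity of the left-hand norms and $u_0\in H^1_0(\Omega)$ for the data term — to recover \eqref{temp3} for the strong solution $u$ of \eqref{eq1}--\eqref{eq3}. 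The weighted algebra itself is routine once the Poincaré absorption is set up exactly as in Lemma \ref{2.1}.
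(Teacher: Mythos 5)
Your proposal is correct and follows essentially the same route as the paper: test \eqref{temp1} with $-e^{2\alpha t}\Delta u$, bound $(f,-\Delta u)$ by Young's inequality, absorb the $-\alpha e^{2\alpha t}\norm{\nabla u}^2$ term via $\norm{\nabla u}^2\le\lambda_1^{-1}\norm{\Delta u}^2$, integrate, and multiply by $e^{-2\alpha t}$. The only difference is that you additionally justify the admissibility of the test function through the Galerkin/eigenfunction construction, a point the paper passes over silently.
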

\begin{proof}
Forming $L^2$- inner product between \eqref{eq1} and $-e^{2\alpha t} \Delta u$ yields 
 \begin{align*}
   \frac{1}{2}\frac{d}{dt}(e^{2\alpha t}\norm{\nabla u(t)}^2)-\alpha e^{2\alpha t}\norm{\nabla u(t)}^2
   & +(1+\norm{\nabla u}^2)e^{2\alpha t}\norm{\Delta u(t)}^2\\
   & =-e^{2\alpha t}(f,\Delta u)\\
   &\leq \frac{1}{2}e^{2\alpha t}\norm{f}^2+\frac{1}{2} e^{2\alpha t}\norm{\Delta u}^2.
 \end{align*}
Using Poincare's inequality, it follows using integration with respect to time that
\begin{align*}
 e^{2\alpha t}\norm{\nabla u(t)}^2+(1-\dfrac{2\alpha}{\lambda_1})\int_{0}^{t} e^{2\alpha s}\norm{\Delta u(s)}^2 ds
 & +2\int_{0}^{t}e^{2\alpha s}\norm{\nabla u(s)}^2\norm{\Delta u(s)}^2ds\\
 &\leq \norm{\nabla u_0}^2+\frac{1}{2\alpha}\norm{f}^2_{L^\infty(L^2)}(e^{2\alpha t}-1).
\end{align*}
Multiplying by $e^{-2\alpha t}$ and with $\beta=(1-\dfrac{2\alpha}{\lambda_1})>0, $ it completes the rest of the proof.
\end{proof}

\begin{lemma}\label{2.3}
  Let $u_0\in H^1_0(\Omega)$ and $f\in L^\infty(L^2)$. Then for $0<\alpha<\dfrac{\lambda_1}{2}$, there holds:
 \begin{align*}
   2e^{-2\alpha t}\int_{0}^{t}&e^{2\alpha s}\norm{u_s}^2ds+(2+\norm{\nabla u(t)}^2)\norm{\nabla u(t)}^2\\
  &\leq(2+\norm{\nabla u_0}^2)\norm{\nabla u_0}^2e^{-2\alpha t}+\frac{1}{\alpha}\norm{f}^2_{L^\infty(L^2)}(1-e^{-2\alpha t})
  +\alpha(1+\dfrac{4}{\beta})\widehat K_0(t)=\widehat K_3(t)\\
  &\leq(2+\norm{\nabla u_0}^2)\norm{\nabla u_0}^2+\frac{1}{\alpha}\norm{f}^2_{L^\infty(L^2)}+\alpha(1+\dfrac{4}{\beta})K_0=K_3.
 \end{align*}
\end{lemma}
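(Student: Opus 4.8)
\emph{Proof sketch.} The plan is to form the $L^2$-inner product of \eqref{eq1} with $e^{2\alpha t}u_t$. Since $-(\Delta u,u_t)=(\nabla u,\nabla u_t)=\frac12\frac{d}{dt}\norm{\nabla u(t)}^2$, the nonlinear term becomes $\bigl(1+\norm{\nabla u}^2\bigr)\frac12\frac{d}{dt}\norm{\nabla u}^2$, and the crucial algebraic observation is that this is an exact time derivative:
\[
\left(1+\norm{\nabla u(t)}^2\right)\frac12\frac{d}{dt}\norm{\nabla u(t)}^2=\frac14\frac{d}{dt}\left(2\norm{\nabla u(t)}^2+\norm{\nabla u(t)}^4\right).
\]
Hence the energy identity reads $e^{2\alpha t}\norm{u_t}^2+\frac14 e^{2\alpha t}\frac{d}{dt}\bigl(2\norm{\nabla u}^2+\norm{\nabla u}^4\bigr)=e^{2\alpha t}(f,u_t)$.

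Next I would move the exponential weight inside the derivative, writing $e^{2\alpha t}\frac{d}{dt}\Phi=\frac{d}{dt}\bigl(e^{2\alpha t}\Phi\bigr)-2\alpha e^{2\alpha t}\Phi$ with the abbreviation $\Phi(t):=2\norm{\nabla u(t)}^2+\norm{\nabla u(t)}^4=(2+\norm{\nabla u(t)}^2)\norm{\nabla u(t)}^2$. Bounding the right-hand side by $(f,u_t)\le\frac12\norm{f}^2+\frac12\norm{u_t}^2$, absorbing $\frac12 e^{2\alpha t}\norm{u_t}^2$ on the left, and multiplying through by $4$, one arrives at
\[
2e^{2\alpha t}\norm{u_t}^2+\frac{d}{dt}\bigl(e^{2\alpha t}\Phi(t)\bigr)\le 2e^{2\alpha t}\norm{f}^2+2\alpha e^{2\alpha t}\Phi(t).
\]
Integrating from $0$ to $t$, the forcing term is controlled by $\frac{1}{\alpha}\norm{f}^2_{L^\infty(L^2)}(e^{2\alpha t}-1)$, while $2\alpha\int_0^t e^{2\alpha s}\Phi(s)\,ds=4\alpha\int_0^t e^{2\alpha s}\norm{\nabla u}^2 ds+2\alpha\int_0^t e^{2\alpha s}\norm{\nabla u}^4 ds$, and these last two integrals are precisely what Lemma \ref{2.1} supplies: from \eqref{temp2}, $\int_0^t e^{2\alpha s}\norm{\nabla u(s)}^2 ds\le \beta^{-1}e^{2\alpha t}\widehat K_0(t)$ and $\int_0^t e^{2\alpha s}\norm{\nabla u(s)}^4 ds\le\frac12 e^{2\alpha t}\widehat K_0(t)$, so the whole term is bounded by $\alpha e^{2\alpha t}\bigl(1+\frac4\beta\bigr)\widehat K_0(t)$.

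Collecting the pieces and multiplying by $e^{-2\alpha t}$ then yields exactly $\widehat K_3(t)$, and the bound by the time-independent constant $K_3$ follows from $\widehat K_0(t)\le K_0$ together with dropping the factors $e^{-2\alpha t}\le 1$. I do not expect a genuine analytic obstacle here; the only point requiring care is rigor, since testing with $u_t$ presupposes $u_t\in L^2$. Strictly, the estimate should first be carried out on the Bubnov--Galerkin approximations (for which $u_t^m$ lies in the finite-dimensional space) and then transferred to the limit, using weak (lower semicontinuous) convergence of the norm terms and weak convergence of $\int_0^t e^{2\alpha s}\norm{u^m_s}^2 ds$; this is the standard a priori/compactness bookkeeping already invoked for the existence result.
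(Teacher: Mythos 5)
Your proposal is correct and follows essentially the same route as the paper: test with $u_t$, recognize $(1+\norm{\nabla u}^2)\tfrac12\tfrac{d}{dt}\norm{\nabla u}^2$ as $\tfrac14\tfrac{d}{dt}\bigl((2+\norm{\nabla u}^2)\norm{\nabla u}^2\bigr)$, introduce the weight $e^{2\alpha t}$, integrate, and absorb the term $2\alpha\int_0^t e^{2\alpha s}(2+\norm{\nabla u}^2)\norm{\nabla u}^2\,ds$ via Lemma \ref{2.1} to obtain exactly the coefficient $\alpha(1+\tfrac4\beta)\widehat K_0(t)$. Your closing remark about first carrying out the estimate on the Galerkin approximations is a point of rigor the paper leaves implicit, but it does not change the argument.
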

\begin{proof}
 Set $v=u_t$ in \eqref{temp1} and obtain 
\begin{equation*}
 \norm{u_t}^2+\frac{1}{4}\frac{d}{dt}((2+\norm{\nabla u}^2)\norm{\nabla u}^2)\leq \frac{1}{2}\norm{f}^2+\frac{1}{2}\norm{u_t}^2,
\end{equation*}
and hence, 
\begin{equation*}
 \norm{u_t}^2+\frac{1}{2}\frac{d}{dt}(2+\norm{\nabla u}^2)\norm{\nabla u}^2\leq \norm{f}^2.
\end{equation*}
Multiplying by $2e^{2\alpha t}$ and then, rewrite it as 
\begin{equation*}
 2e^{2\alpha t}\norm{u_t}^2+\frac{d}{dt}\left(e^{2\alpha t}(2+\norm{\nabla u}^2)\norm{\nabla u}^2\right)\leq2e^{2\alpha t}\norm{f}^2+2\alpha
 e^{2\alpha t}(2+\norm{\nabla u}^2)\norm{\nabla u}^2.
\end{equation*}
Thus, on integration with respect time from $0$ to $t$  and then, multiplying the resulting inequality by $e^{-2\alpha t}$ to obtain
\begin{align*}
 2e^{-2\alpha t}\int_{0}^{t} e^{2\alpha s}\norm{u_s}^2ds+&(2+\norm{\nabla u}^2)\norm{\nabla u}^2\\&\leq(2+\norm{\nabla u_0}^2)\norm{\nabla u_0}^2
 e^{-2\alpha t}+\dfrac{1}{\alpha}\norm{f}^2_{L^\infty(L^2)}(1-e^{-2\alpha t})\\
 &+2\alpha e^{-2\alpha t}\int_{0}^{t}e^{2\alpha s}(2+\norm{\nabla u}^2)\norm{\nabla u}^2 ds.
\end{align*}
Using Lemma \ref{temp1}, we note that
\begin{align*}
 2\alpha e^{-2\alpha t}\int_{0}^{t}e^{2\alpha s}(2+\norm{\nabla u}^2)\norm{\nabla u}^2 ds \leq\alpha\left(1+\dfrac{4}{\beta}\right)\widehat K_0(t).
\end{align*}
Thus, we arrive at
\begin{align*}
  2e^{-2\alpha t}\int_{0}^{t} e^{2\alpha s}\norm{u_s}^2ds+&(2+\norm{\nabla u(t)}^2)\norm{\nabla u(t)}^2\\
  &\leq(2+\norm{\nabla u_0}^2)\norm{\nabla u_0}^2e^{-2\alpha t}+\dfrac{1}{\alpha}\norm{f}^2_{L^\infty(L^2)}(1-e^{-2\alpha t})\\
  &+\alpha\left(1+\dfrac{4}{\beta}\right)\widehat K_0(t)=\widehat K_3(t)\\
  &\leq(2+\norm{\nabla u_0}^2)\norm{\nabla u_0}^2+\frac{1}{\alpha}\norm{f}^2_{L^\infty(L^2)}+\alpha\left(1+\dfrac{4}{\beta}\right)K_0=K_3.
\end{align*}
This concludes the rest of the proof.
\end{proof}
\begin{lemma}\label{2.4}
   Let $u_0\in H^2\cap H^1_0(\Omega)$, $f\in L^\infty(L^2)$ and $f_t\in L^\infty(H^{-1})$. Then there holds
 \begin{align*}
  &\norm{u_t(t)}^2+e^{-2\alpha t}\int_{0}^{t}e^{2\alpha s}(\beta+2\norm{\nabla u}^2)\norm{\nabla u_s}^2ds+4e^{-2\alpha t}\int_{0}^{t}e^{2\alpha s}(\nabla u_s,\nabla u)^2ds\\
  &\leq e^{-2\alpha t}\left((1+\norm{\nabla u_0}^2)\norm{u_0}_{H^2}+\norm{f_0}\right)^2+e^{-2\alpha t}\int_{0}^{t}e^{2\alpha s}\norm{f_s}^2_{-1}\;ds.
 \end{align*}
 \end{lemma}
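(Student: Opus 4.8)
The plan is to differentiate the weak formulation \eqref{temp1} with respect to $t$, test the resulting identity with an exponentially weighted multiple of $u_t$, and integrate. Since $u_t$ is time dependent and a priori one only knows $u\in L^2(H^1_0)$ with $u_t\in L^2(L^2)$, the computation below should be carried out rigorously on the Bubnov--Galerkin approximations (where all quantities are smooth in $t$ and the finite-dimensional test functions may be differentiated and reinserted; an eigenfunction basis also makes $\Delta u_0^m\to\Delta u_0$ in $L^2$ available), and then one passes to the limit using the bounds of Lemmas \ref{2.1}--\ref{2.3}; I describe only the formal argument.

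First, differentiating \eqref{temp1} in time and using $\frac{d}{dt}\norm{\nabla u}^2=2(\nabla u_t,\nabla u)$ gives, for every fixed $v\in H^1_0(\Omega)$,
\begin{equation*}
(u_{tt},v)+2(\nabla u_t,\nabla u)(\nabla u,\nabla v)+(1+\norm{\nabla u}^2)(\nabla u_t,\nabla v)=(f_t,v).
\end{equation*}
I would then take $v=e^{2\alpha t}u_t$, write $e^{2\alpha t}(u_{tt},u_t)=\frac12\frac{d}{dt}(e^{2\alpha t}\norm{u_t}^2)-\alpha e^{2\alpha t}\norm{u_t}^2$, and absorb the term $\alpha e^{2\alpha t}\norm{u_t}^2$ by Poincar\'e's inequality, $\alpha\norm{u_t}^2\le\frac{\alpha}{\lambda_1}\norm{\nabla u_t}^2$. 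On the right-hand side, $(f_t,u_t)\le\norm{f_t}_{-1}\norm{\nabla u_t}\le\frac12\norm{f_t}^2_{-1}+\frac12\norm{\nabla u_t}^2$. Splitting $(1+\norm{\nabla u}^2)\norm{\nabla u_t}^2=\norm{\nabla u_t}^2+\norm{\nabla u}^2\norm{\nabla u_t}^2$ and collecting terms, the coefficient of $e^{2\alpha t}\norm{\nabla u_t}^2$ becomes $1-\frac{\alpha}{\lambda_1}-\frac12=\frac12\bigl(1-\frac{2\alpha}{\lambda_1}\bigr)=\frac{\beta}{2}>0$, so after multiplying by $2$,
\begin{equation*}
\frac{d}{dt}\bigl(e^{2\alpha t}\norm{u_t}^2\bigr)+e^{2\alpha t}(\beta+2\norm{\nabla u}^2)\norm{\nabla u_t}^2+4e^{2\alpha t}(\nabla u_t,\nabla u)^2\le e^{2\alpha t}\norm{f_t}^2_{-1}.
\end{equation*}

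Integrating from $0$ to $t$ and multiplying by $e^{-2\alpha t}$ then yields precisely the asserted inequality, once $\norm{u_t(0)}$ is controlled. For that I would evaluate \eqref{eq1} at $t=0$: since $u_0\in H^2\cap H^1_0(\Omega)$ and $f_0\in L^2(\Omega)$, one has $u_t(0)=(1+\norm{\nabla u_0}^2)\Delta u_0+f_0$ in $L^2(\Omega)$, hence $\norm{u_t(0)}\le(1+\norm{\nabla u_0}^2)\norm{u_0}_{H^2}+\norm{f_0}$, and $e^{-2\alpha t}\norm{u_t(0)}^2$ is exactly the first term on the right of the claimed bound; the hypothesis $f_t\in L^\infty(H^{-1})$ makes $e^{-2\alpha t}\int_0^t e^{2\alpha s}\norm{f_s}^2_{-1}\,ds$ finite.

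The only genuinely delicate point is the legitimacy of differentiating the equation in time and of reinserting the time-dependent function $u_t$ as a test function; as noted, this is handled at the Galerkin level, the passage to the limit being supplied by the uniform-in-time a priori estimates already established (in particular Lemmas \ref{2.2}--\ref{2.3}, which control $\norm{\nabla u(t)}$ and $e^{-2\alpha t}\int_0^t e^{2\alpha s}\norm{u_s}^2\,ds$) together with the weak/weak-$*$ compactness these bounds provide. Everything else reduces to routine applications of Young's and Poincar\'e's inequalities.
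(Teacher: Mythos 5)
Your argument is correct and is essentially identical to the paper's proof: differentiate the weak form in time, test with $e^{2\alpha t}u_t$, absorb the lower-order terms via Poincar\'e and Young to obtain the coefficient $\beta+2\norm{\nabla u}^2$, integrate, and bound $\norm{u_t(0)}$ by evaluating the equation at $t=0$. Your additional remark about carrying the computation out at the Galerkin level before passing to the limit is a point the paper leaves implicit, but it does not change the substance of the argument.
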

 \begin{proof}
  Differentiating of \eqref{temp1} with respect to time yields
  \begin{align}
   (u_{tt},v)+\left((1+\norm{\nabla u}^2)\nabla u_t,\nabla v\right)+\left(2(\nabla u_t,\nabla u)\nabla u,\nabla v\right)=(f_t,v).\label{eq2.4}
  \end{align}
 Substitute $v=u_t$ in \eqref{eq2.4} to obtain
 \begin{align}
  \frac{1}{2}\frac{d}{dt}\norm{u_t}^2+(1+\norm{\nabla u}^2)\norm{\nabla u_t}^2+2
  (\nabla u_t,\nabla u)^2=(f_t,u_t).\label{eq2.5}
 \end{align}
 Multiplying \eqref{eq2.5} by $2e^{2\alpha t}$, $\alpha>0$ and using Poincare's inequality it follows that
 \begin{align*}
  \frac{d}{dt}(e^{2\alpha t}\norm{u_t}^2)+e^{2\alpha t}\left(1-\frac{2\alpha}{\lambda_1}\right)\norm{\nabla u_t}^2+2e^{2\alpha t}(1+\norm{\nabla u}^2)\norm{\nabla u_t}^2+&4e^{2\alpha t}(\nabla u_t,\nabla u)^2\\
  &\leq e^{2\alpha t}\norm{f_t}^2_{-1}.
 \end{align*}
 Integrating with respect to time from $0$ to $t$ and multiplying the resulting inequality by $e^{-2\alpha t}$ to obtain
 \begin{align}
  &\norm{u_t(t)}^2+e^{-2\alpha t}\int_{0}^{t}e^{2\alpha s}(\beta+2\norm{\nabla u}^2)\norm{\nabla u_s}^2ds+4e^{-2\alpha t}\int_{0}^{t}e^{2\alpha s}(\nabla u_s,\nabla u)^2ds\notag\\
  &\leq e^{-2\alpha t}\norm{u_t(0)}^2+e^{-2\alpha t}\int_{0}^{t}e^{2\alpha s}\norm{f_s}^2_{-1}ds\label{eq2.6}\\
  &\leq e^{-2\alpha t}\left((1+\norm{\nabla u_0}^2)\norm{u_0}_{H^2}+\norm{f_0}\right)^2+e^{-2\alpha t}\int_{0}^{t}e^{2\alpha s}\norm{f_s}^2_{-1} ds.\notag
 \end{align}
This completes the rest of the proof.
 \end{proof}
 \begin{lemma}\label{2.5}
  Let $u_0\in H^2\cap H^1_0(\Omega)$, $f\in L^\infty(L^2)$ and $f_t\in L^\infty(H^{-1})$. Then, there holds:
\begin{align*}
(1+\norm{\nabla u}^2)\norm{\Delta u}^2\leq2\norm{f}^2_{L^\infty(L^2)}+2e^{-2\alpha t}\Big(\left((1+\norm{\nabla u_0}^2)\norm{u_0}_{H^2}+\norm{f_0}\right)^2&+\\
 \int_{0}^{t}e^{2\alpha s}\norm{f_s}^2_{-1}\;ds\Big).
\end{align*}
\end{lemma}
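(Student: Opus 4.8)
The plan is to read the estimate directly off the differential equation, using only the pointwise-in-time bound for $\norm{u_t(t)}^2$ already established in Lemma \ref{2.4}. First I would invoke the global strong solvability proved in this section together with $H^2$-elliptic regularity on the convex polygonal/polyhedral domain $\Omega$ to justify that, for a.e.\ $t$, $u(t)\in H^2\cap H^1_0(\Omega)$ and \eqref{eq1} holds as an identity in $L^2(\Omega)$; rearranging it gives
\[
\left(1+\norm{\nabla u(t)}^2\right)\Delta u(t) \;=\; u_t(t)-f(t)\qquad\text{in }L^2(\Omega).
\]
Taking the $L^2$-norm of both sides and applying $\norm{a-b}^2\le 2\norm{a}^2+2\norm{b}^2$ yields
\[
\left(1+\norm{\nabla u(t)}^2\right)^2\norm{\Delta u(t)}^2 \;\le\; 2\norm{u_t(t)}^2+2\norm{f(t)}^2 .
\]

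Next, since $1+\norm{\nabla u(t)}^2\ge 1$, one has $\left(1+\norm{\nabla u(t)}^2\right)\norm{\Delta u(t)}^2 \le \left(1+\norm{\nabla u(t)}^2\right)^2\norm{\Delta u(t)}^2$, so dropping the extra factor on the left and bounding $\norm{f(t)}\le\norm{f}_{L^\infty(L^2)}$ gives
\[
\left(1+\norm{\nabla u(t)}^2\right)\norm{\Delta u(t)}^2 \;\le\; 2\norm{f}^2_{L^\infty(L^2)}+2\norm{u_t(t)}^2 .
\]
Finally I would substitute the bound \eqref{eq2.6} from Lemma \ref{2.4},
\[
\norm{u_t(t)}^2\le e^{-2\alpha t}\left((1+\norm{\nabla u_0}^2)\norm{u_0}_{H^2}+\norm{f_0}\right)^2+e^{-2\alpha t}\int_{0}^{t}e^{2\alpha s}\norm{f_s}^2_{-1}\,ds,
\]
valid precisely under the present hypotheses $u_0\in H^2\cap H^1_0(\Omega)$, $f\in L^\infty(L^2)$, $f_t\in L^\infty(H^{-1})$, and collect the terms to arrive at the asserted inequality.

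There is essentially no analytic obstacle in the argument itself; it is a two-line consequence of the equation and Lemma \ref{2.4}. The only point that genuinely needs care is the regularity justification in the first step, namely that $\Delta u(t)$ is a well-defined element of $L^2(\Omega)$ for a.e.\ $t$ and that \eqref{eq1} holds pointwise in time rather than merely in the weak sense of \eqref{temp1}. This rests on the strong global existence theorem of Section 2 and on the fact that, for convex polygonal or polyhedral $\Omega$, the Dirichlet Laplacian maps $H^2\cap H^1_0$ onto $L^2$ with a bound, so that $\norm{\Delta u(t)}$ is controlled by $\norm{u(t)}_{H^2}$ and the manipulation above is legitimate.
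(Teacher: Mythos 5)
Your proposal is correct and follows essentially the same route as the paper: both reduce the claim to the intermediate bound $(1+\norm{\nabla u}^2)\norm{\Delta u}^2\leq 2\norm{f}^2_{L^\infty(L^2)}+2\norm{u_t}^2$ and then invoke Lemma \ref{2.4}. The only cosmetic difference is that you square the pointwise identity $(1+\norm{\nabla u}^2)\Delta u=u_t-f$ (which in fact yields the slightly stronger left-hand factor $(1+\norm{\nabla u}^2)^2$), whereas the paper tests the weak form against $-\Delta u$ and absorbs $\tfrac12\norm{\Delta u}^2$ by Young's inequality; your remark on justifying $\Delta u(t)\in L^2$ is a welcome point the paper glosses over.
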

\begin{proof}
 Substitute $v=-\Delta u$ in the Weak formulation \eqref{temp1} to obtain 
 \begin{align*}
  (1+\norm{\nabla u}^2)\norm{\Delta u}^2=-(f,\Delta u)+(u_t,\Delta u).
 \end{align*}
 Using Young's inequality, we bound
 \begin{align*}
  (1+\norm{\nabla u}^2)\norm{\Delta u}^2&\leq\norm{f}^2_{L^\infty(L^2)}+\norm{u_t}^2_{L^\infty(L^2)}+\frac{1}{2}\norm{\Delta u}^2\\
  &\leq\norm{f}^2_{L^\infty(L^2)}+\norm{u_t}^2_{L^\infty(L^2)}+\frac{1}{2}(1+\norm{\nabla u}^2)\norm{\Delta u}^2 .                                       
 \end{align*}
 Therefore, we arrive at
 \begin{align}
  (1+\norm{\nabla u}^2)\norm{\Delta u}^2\leq2\norm{f}^2_{L^\infty(L^2)}+2\norm{u_t}^2_{L^\infty(L^2)}.
 \end{align}
 From Lemma \ref{2.4} applying the bound of $\norm{u_t}$ we obtain bound for $\norm{\Delta u}.$ This 
 completes the proof.
\end{proof}
% In order to esimate $\norm{\Delta u}$ take $L^2$ inner product between \eqref{eq1} and $-\Delta u_t$
  Since $\Omega$ is a convex polygonal bounded domain, hence $\norm{u}^2_{H^2}\leq C_R\norm{\Delta u}^2$. Now from  Lemmas \ref{2.4}-\ref{2.5}, it follows that
$$\norm{u(t)}^2_{H^2}\leq2C_R\left(\norm{f}^2_{L^\infty(L^2)}+e^{-2\alpha t}\left((1+\norm{\nabla u_0}^2)^2\norm{u_0}^2_{H^2}+\norm{f_0}^2\right)+e^{-2\alpha t}\int_{0}^{t}e^{2\alpha s}\norm{f_s}^2_{-1}ds\right).$$
%  Since $\Omega$ is a convex polygonal bounded domain, hence $\norm{u}^2_{H^2}\leq C_R\norm{\Delta u}^2$. Hence, from  Lemmas \ref{2.4} , \ref{2.5}, it follows that
%  $$\norm{u(t)}^2_{H^2}\leq\frac{C_R(K_3)}{\tau}\left(\norm{f}^2_{L^\infty(L^2)}+e^{-2\alpha t}\int_{0}^{t}e^{2\alpha s}\norm{f_s}^2_{-1}ds\right)$$
 \begin{lemma}\label{nl1}
  Let $u_0\in H^3$, $f\in L^\infty(L^2)$ and $f_t\in L^\infty(H^{-1})$. Then, there holds
  \begin{align*}
  \norm{\nabla u_t}^2+&e^{-2\alpha t}\int_{0}^{t}e^{2\alpha s}(1+\norm{\nabla u}^2)\norm{\Delta u_t(s)}^2 ds\notag\\
  &\leq C\Big(e^{-2\alpha t}\norm{u_0}^2_{H^3}+e^{-2\alpha t}\left((1+\norm{\nabla u_0}^2)^2\norm{u_0}^2_{H^2}+\norm{f_0}^2\right)\notag\\
  &+e^{-2\alpha t}\int_{0}^{t}e^{2\alpha s}\norm{f_s}^2_{-1}ds\Big).
%     \norm{\nabla u_t}^2+&e^{-2\alpha t}\int_{0}^{t}e^{2\alpha s}\Big((1+\norm{\nabla u}^2)(1-2\dfrac{\alpha}{\lambda_1})-\norm{\Delta u}^2\Big)\norm{\Delta u_t(s)}^2 ds\\
%      &\leq e^{-2\alpha t}\norm{u_0}^2_{H^3}+e^{-2\alpha t}\left((1+\norm{\nabla u_0}^2)\norm{u_0}_{H^2}+\norm{f_0}\right)+2e^{-2\alpha t}\int_{0}^{t}e^{2\alpha s}\norm{f_s}^2_{-1}ds.
  \end{align*}

 \end{lemma}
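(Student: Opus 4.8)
The plan is to push the exponentially weighted energy method of Lemmas \ref{2.1}--\ref{2.4} one derivative higher for $u_t$. First I would differentiate \eqref{eq1} in time,
$$u_{tt}-(1+\norm{\nabla u}^2)\Delta u_t-2(\nabla u_t,\nabla u)\Delta u=f_t,$$
and form the $L^2$-inner product with $-\Delta u_t$. Since $u_t=0$ (hence $u_{tt}=0$) on $\partial\Omega$ for all $t$, one has $(u_{tt},-\Delta u_t)=(\nabla u_{tt},\nabla u_t)=\tfrac12\tfrac{d}{dt}\norm{\nabla u_t}^2$, and the diffusion term contributes $(1+\norm{\nabla u}^2)\norm{\Delta u_t}^2$, so that
$$\frac12\frac{d}{dt}\norm{\nabla u_t}^2+(1+\norm{\nabla u}^2)\norm{\Delta u_t}^2+2(\nabla u_t,\nabla u)(\Delta u,\Delta u_t)=(f_t,-\Delta u_t).$$
As in the earlier lemmas this identity is to be justified on the Bubnov--Galerkin level, where $-\Delta u^m_t$ sits in the discrete space; the hypothesis $u_0\in H^3$ supplies the extra regularity used to pass to the limit and to make sense of the pairing on the right, and the initial quantity $\norm{\nabla u_t(0)}^2$ is estimated through \eqref{eq1} at $t=0$ (i.e. $u_t(0)=(1+\norm{\nabla u_0}^2)\Delta u_0+f_0$) by a constant times $\norm{u_0}^2_{H^3}+(1+\norm{\nabla u_0}^2)^2\norm{u_0}^2_{H^2}+\norm{f_0}^2$.

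Next I would multiply by $2e^{2\alpha t}$, rewrite the leading term as $\tfrac{d}{dt}(e^{2\alpha t}\norm{\nabla u_t}^2)-2\alpha e^{2\alpha t}\norm{\nabla u_t}^2$, and, recalling $\tfrac{2\alpha}{\lambda_1}=1-\beta$, use Poincaré's inequality $\norm{\nabla u_t}^2\le\lambda_1^{-1}\norm{\Delta u_t}^2$ to absorb $2\alpha e^{2\alpha t}\norm{\nabla u_t}^2$ into the $\norm{\Delta u_t}^2$-part of the dissipation; this leaves $\big((1+\beta)+2\norm{\nabla u}^2\big)e^{2\alpha t}\norm{\Delta u_t}^2$ on the left. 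For the cubic term I would use Cauchy--Schwarz and Young,
$$4\,|(\nabla u_t,\nabla u)|\,\norm{\Delta u}\,\norm{\Delta u_t}\le\norm{\Delta u_t}^2+4(\nabla u_t,\nabla u)^2\norm{\Delta u}^2,$$
swallowing the $\norm{\Delta u_t}^2$ into the spare unit of $(1+\beta)$ and, in the surviving term, invoking Lemma \ref{2.5} to replace $\norm{\Delta u(s)}^2$ by a data constant $M$, which turns it into $4M\,e^{2\alpha s}(\nabla u_s,\nabla u)^2$. The forcing term I would bound by $2\,e^{2\alpha s}(f_s,-\Delta u_s)\le\tfrac{\beta}{4}e^{2\alpha s}(1+\norm{\nabla u}^2)\norm{\Delta u_s}^2+C\,e^{2\alpha s}\norm{f_s}^2_{-1}$ (Young plus $H^{-1}$--$H^1_0$ duality), again absorbing the dissipative part; after all absorptions the coefficient in front of $\norm{\Delta u_s}^2$ is still a positive multiple of $(1+\norm{\nabla u}^2)$.

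Integrating from $0$ to $t$ and multiplying by $e^{-2\alpha t}$ then gives
$$\norm{\nabla u_t(t)}^2+c\,\beta\,e^{-2\alpha t}\!\int_0^t\! e^{2\alpha s}(1+\norm{\nabla u}^2)\norm{\Delta u_s}^2\,ds\le e^{-2\alpha t}\norm{\nabla u_t(0)}^2+C\,e^{-2\alpha t}\!\int_0^t\! e^{2\alpha s}\norm{f_s}^2_{-1}\,ds+4M\,e^{-2\alpha t}\!\int_0^t\! e^{2\alpha s}(\nabla u_s,\nabla u)^2\,ds,$$
and it remains to feed in Lemma \ref{2.4}: its left-hand side already carries $4\,e^{-2\alpha t}\int_0^t e^{2\alpha s}(\nabla u_s,\nabla u)^2\,ds$, so that integral is controlled by $\tfrac14\big(((1+\norm{\nabla u_0}^2)\norm{u_0}_{H^2}+\norm{f_0})^2+\int_0^t e^{2\alpha s}\norm{f_s}^2_{-1}\,ds\big)$; substituting this together with the bound for $\norm{\nabla u_t(0)}^2$ and dividing out $c\beta$ produces exactly the asserted estimate, with the constant $C$ absorbing $1/\beta$ and $M$. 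The hard part is really the regularity side --- guaranteeing that $-\Delta u_t$ is an admissible test object and that $(f_t,-\Delta u_t)$ is meaningful and controlled by $\norm{f_t}_{-1}$, which is precisely where $u_0\in H^3$ is spent --- whereas the nonlinearity costs nothing extra: unlike a naive Gronwall argument, the cubic term is tamed outright by the already established bound of Lemma \ref{2.4}.
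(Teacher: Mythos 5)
Your argument follows essentially the same route as the paper: differentiate \eqref{temp1} in time, test with $-\Delta u_t$, multiply by $e^{2\alpha t}$, absorb the dissipative pieces by Young's inequality, and control the cubic remainder by the already-established bounds of Lemmas \ref{2.4} and \ref{2.5} rather than by Gronwall. The only cosmetic differences are that you absorb $2\alpha e^{2\alpha t}\norm{\nabla u_t}^2$ via Poincar\'e applied to $\Delta u_t$ (the paper instead leaves $2\alpha\int e^{2\alpha s}\norm{\nabla u_s}^2\,ds$ and invokes Lemma \ref{2.4}), and that you split the cubic term as $4(\nabla u_s,\nabla u)^2\norm{\Delta u}^2$ and feed it to the left-hand side of Lemma \ref{2.4}, whereas the paper writes it as $8\norm{u_t}^2\norm{\Delta u}^4$; both are controlled by the same lemmas. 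One caveat: your bound $2e^{2\alpha s}(f_s,-\Delta u_s)\le \tfrac{\beta}{4}e^{2\alpha s}(1+\norm{\nabla u}^2)\norm{\Delta u_s}^2+Ce^{2\alpha s}\norm{f_s}^2_{-1}$ is not a legitimate $H^{-1}$--$H^1_0$ duality estimate, since $-\Delta u_s$ is not in $H^1_0$; the pairing must be estimated as $\norm{f_s}\,\norm{\Delta u_s}$, which costs the $L^2$ norm of $f_s$, not the $H^{-1}$ norm. The paper's own proof does exactly this (its display \eqref{ne4} carries $\norm{f_s}^2$) and then silently writes $\norm{f_s}^2_{-1}$ in the conclusion, so you have reproduced, rather than introduced, this inconsistency; but you should be aware that as justified the step does not hold under the stated hypothesis $f_t\in L^\infty(H^{-1})$ alone.
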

\begin{proof}
  Differentiating of \eqref{temp1} with respect to time yields
  \begin{align}
   (u_{tt},v)-\left((1+\norm{\nabla u}^2)\Delta u_t,v\right)-\left(2(\nabla u_t,\nabla u)\Delta u,v\right)=(f_t,v).\label{ne1}
  \end{align}
  Substitute $v=-\Delta u_t$ in \eqref{ne1} to obtain
 \begin{align}
  \frac{d}{dt}\norm{\nabla u_t}^2+2(1+\norm{\nabla u}^2)\norm{\Delta u_t}^2=2(f_t,-\Delta u_t)+4\Big((u_t,-\Delta u)\Delta u,-\Delta u_t\Big).\label{ne2}
 \end{align}
 On multiplying \eqref{ne2} by $e^{2\alpha t},\alpha>0$ and using Young's inequality, it follows that
 \begin{align}
  \frac{d}{dt}(e^{2\alpha t}\norm{\nabla u_t}^2)&+2e^{2\alpha t}(1+\norm{\nabla u}^2)\norm{\Delta u_t}^2\notag\\
  &\leq 2e^{2\alpha t}\norm{f_t}^2+\frac{1}{2}e^{2\alpha t}\norm{\Delta u_t}^2+8e^{2\alpha t}\norm{u_t}^2\norm{\Delta u}^4\notag\\
  &+\frac{1}{2} e^{2\alpha t}\norm{\Delta u_t}^2+2\alpha e^{2\alpha t}\norm{\nabla u_t}^2.\label{ne3}
 \end{align}
Now using Poincare's inequality, we find that
\begin{align}
 \frac{d}{dt}(e^{2\alpha t}\norm{\nabla u_t}^2)&+e^{2\alpha t}(1+\norm{\nabla u}^2)\norm{\Delta u_t}^2\notag\\
 &\leq 2e^{2\alpha t}\norm{f_t}^2+8e^{2\alpha t}\norm{u_t}^2\norm{\Delta u}^4+2\alpha e^{2\alpha t}\norm{\nabla u_t}^2.
\end{align}
 Integrating with respect to time from $0$ to $t$ and multiplying the resulting inequality by $e^{-2\alpha t}$ to obtain
 \begin{align}
  \norm{\nabla u_t}^2+e^{-2\alpha t}\int_{0}^{t}e^{2\alpha s}&(1+\norm{\nabla u}^2)\norm{\Delta u_t(s)}^2 ds\notag\\
  &\leq e^{-2\alpha t}\norm{\nabla u_t(0)}^2+2e^{-2\alpha t}\int_{0}^{t}e^{2\alpha s}\norm{f_s}^2 ds\notag\\
  &+8e^{-2\alpha t}\int_{0}^{t}e^{2\alpha s}\norm{u_t}^2\norm{\Delta u}^4 ds+2\alpha e^{-2\alpha t}\int_{0}^{t}e^{2\alpha s}\norm{\nabla u_t(s)}^2 ds.\label{ne4}
 \end{align}
Therefore by Lemma \ref{2.4}, we arrive at
\begin{align}
  \norm{\nabla u_t}^2+&e^{-2\alpha t}\int_{0}^{t}e^{2\alpha s}(1+\norm{\nabla u}^2)\norm{\Delta u_t(s)}^2 ds\notag\\
  &\leq C\Big(e^{-2\alpha t}\norm{u_0}^2_{H^3}+e^{-2\alpha t}\left((1+\norm{\nabla u_0}^2)^2\norm{u_0}^2_{H^2}+\norm{f_0}^2\right)\notag\\
  &+e^{-2\alpha t}\int_{0}^{t}e^{2\alpha s}\norm{f_s}^2_{-1}ds\Big).
\end{align}
This completes the rest of the proof.
\end{proof}
% --------------------------------------
\begin{lemma}\label{nl2}
 Let $u_0\in H^3$, $f\in L^\infty(L^2)$ and $f_t\in L^\infty(H^{-1})$. Then, the following result holds
 \begin{align*}
   e^{-2\alpha t}\int_{0}^{t}&e^{2\alpha s}\norm{u_{tt}(s)}^2 ds+(1+\norm{\nabla u(t)}^2)\norm{\nabla u_t}^2\\
  &\leq e^{-2\alpha t}\norm{u_0}^2_{H^3}+C(K_1)\Big(e^{-2\alpha t}\left((1+\norm{\nabla u_0}^2)^2\norm{u_0}^2_{H^2}+\norm{f_0}^2\right)+e^{-2\alpha t}\int_{0}^{t}e^{2\alpha s}\norm{f_s}^2_{-1}ds\Big).
 \end{align*}
\end{lemma}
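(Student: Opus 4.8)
The plan is to differentiate the weak formulation \eqref{temp1} once in time, test the result with $v=u_{tt}$, and run the same exponentially weighted energy estimate as in Lemmas \ref{2.4} and \ref{nl1}; Lemma \ref{nl1} will then be used a second time, in an essential way, to keep the $e^{-2\alpha t}$ decay in the residual gradient integral.

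Putting $v=u_{tt}$ in \eqref{eq2.4} gives
$$\norm{u_{tt}}^2+(1+\norm{\nabla u}^2)(\nabla u_t,\nabla u_{tt})+2(\nabla u,\nabla u_t)(\nabla u,\nabla u_{tt})=(f_t,u_{tt}).$$
Using $\tfrac{d}{dt}\norm{\nabla u}^2=2(\nabla u,\nabla u_t)$ and $\tfrac{d}{dt}(\nabla u,\nabla u_t)=\norm{\nabla u_t}^2+(\nabla u,\nabla u_{tt})$, the two middle terms equal $\tfrac12\tfrac{d}{dt}\big[(1+\norm{\nabla u}^2)\norm{\nabla u_t}^2+2(\nabla u,\nabla u_t)^2\big]-3(\nabla u,\nabla u_t)\norm{\nabla u_t}^2$. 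Multiplying the resulting identity by $2e^{2\alpha t}$ and moving the $2\alpha$-term to the right, the right-hand side consists of $2e^{2\alpha t}(f_t,u_{tt})$, of $2\alpha e^{2\alpha t}\big[(1+\norm{\nabla u}^2)\norm{\nabla u_t}^2+2(\nabla u,\nabla u_t)^2\big]$, and of $6e^{2\alpha t}(\nabla u,\nabla u_t)\norm{\nabla u_t}^2$. I would bound $2e^{2\alpha t}(f_t,u_{tt})\le e^{2\alpha t}\norm{u_{tt}}^2+e^{2\alpha t}\norm{f_t}^2$ and absorb $e^{2\alpha t}\norm{u_{tt}}^2$ on the left; and, using $\norm{\nabla u}^2\le K_1$ from Lemma \ref{2.2}, $|(\nabla u,\nabla u_t)|\le\norm{\nabla u}\norm{\nabla u_t}$, and the boundedness of $\norm{\nabla u_t}$ supplied by Lemma \ref{nl1}, bound both of the remaining terms by $C(K_1)e^{2\alpha t}\norm{\nabla u_t}^2$ (for instance $6(\nabla u,\nabla u_t)\norm{\nabla u_t}^2\le 3(\norm{\nabla u}^2+\norm{\nabla u_t}^2)\norm{\nabla u_t}^2$).

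Integrating over $(0,t)$, multiplying by $e^{-2\alpha t}$, and dropping the nonnegative $2(\nabla u,\nabla u_t)^2$ on the left leaves
$$e^{-2\alpha t}\int_0^te^{2\alpha s}\norm{u_{ss}}^2\,ds+(1+\norm{\nabla u(t)}^2)\norm{\nabla u_t(t)}^2\le e^{-2\alpha t}\big[(1+\norm{\nabla u_0}^2)\norm{\nabla u_t(0)}^2+2(\nabla u_0,\nabla u_t(0))^2\big]+e^{-2\alpha t}\int_0^te^{2\alpha s}\norm{f_s}^2\,ds+C(K_1)e^{-2\alpha t}\int_0^te^{2\alpha s}\norm{\nabla u_s}^2\,ds.$$
The initial term is handled by evaluating \eqref{eq1} at $t=0$: since $u_0\in H^3$ one has $\nabla u_t(0)=(1+\norm{\nabla u_0}^2)\nabla\Delta u_0+\nabla f_0$, so $\norm{\nabla u_t(0)}^2\le C(K_1)\norm{u_0}^2_{H^3}+C\norm{f_0}^2$ and $(\nabla u_0,\nabla u_t(0))^2\le K_1\norm{\nabla u_t(0)}^2$, producing the $e^{-2\alpha t}\norm{u_0}^2_{H^3}$ and $C(K_1)e^{-2\alpha t}\big((1+\norm{\nabla u_0}^2)^2\norm{u_0}^2_{H^2}+\norm{f_0}^2\big)$ contributions. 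The last term is the delicate one and the main obstacle: estimating $\int_0^te^{2\alpha s}\norm{\nabla u_s}^2\,ds$ by $t\,\sup_s\norm{\nabla u_s}^2$ would yield only a $te^{-2\alpha t}$ bound and destroy the claimed $e^{-2\alpha t}$ decay, so instead I would use Poincar\'e, $\norm{\nabla u_s}^2\le\lambda_1^{-1}(1+\norm{\nabla u}^2)\norm{\Delta u_s}^2$, which reduces it to $\lambda_1^{-1}C(K_1)$ times $e^{-2\alpha t}\int_0^te^{2\alpha s}(1+\norm{\nabla u}^2)\norm{\Delta u_s}^2\,ds$; the latter is already bounded, with the full $e^{-2\alpha t}$ factor, by the right-hand side of Lemma \ref{nl1}. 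Collecting these estimates gives the asserted bound (with $\norm{f_s}_{-1}$ written in place of $\norm{f_s}$, as in the statement of Lemma \ref{nl1}).
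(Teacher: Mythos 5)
Your argument is correct and follows the paper's strategy in all essentials: differentiate \eqref{temp1} in time, test with $u_{tt}$, weight by $e^{2\alpha t}$, and close using the earlier a priori bounds. Two points of genuine (if modest) divergence are worth recording. (i) You keep the nonlocal term $2(\nabla u,\nabla u_t)(\nabla u,\nabla u_{tt})$ in gradient form and absorb it into the augmented energy $(1+\norm{\nabla u}^2)\norm{\nabla u_t}^2+2(\nabla u,\nabla u_t)^2$, paying with the cubic remainder $3(\nabla u,\nabla u_t)\norm{\nabla u_t}^2$; the paper instead integrates by parts to $-2\bigl((\nabla u_t,\nabla u)\Delta u,u_{tt}\bigr)$ and uses Young's inequality against $\norm{u_{tt}}^2$, which is why its bracket in \eqref{ne8} carries the factor $8\norm{\nabla u}^2\norm{\Delta u}^2$ and why it must invoke Lemma \ref{2.5} for $\norm{\Delta u}$. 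Your route dispenses with Lemma \ref{2.5}, though both routes still need the pointwise bound on $\norm{\nabla u_t}$ from Lemma \ref{nl1} to reduce the remainder to $C(K_1)\norm{\nabla u_t}^2$. (ii) For the leftover term $e^{-2\alpha t}\int_0^t e^{2\alpha s}\norm{\nabla u_s}^2\,ds$ you detour through Poincar\'e and the $\norm{\Delta u_s}$ integral of Lemma \ref{nl1}; the paper reads the same bound, with the same decay, directly off Lemma \ref{2.4}, whose left-hand side already contains $\beta e^{-2\alpha t}\int_0^t e^{2\alpha s}\norm{\nabla u_s}^2\,ds$ --- that is the shorter route and you may as well use it. Finally, you inherit (and implicitly flag) two small imprecisions that are already present in the paper's own proof: the Young step on $(f_t,u_{tt})$ produces $\norm{f_s}^2$ rather than the $\norm{f_s}^2_{-1}$ appearing in the statement, and the bound on $\norm{\nabla u_t(0)}$ via $\nabla u_t(0)=(1+\norm{\nabla u_0}^2)\nabla\Delta u_0+\nabla f_0$ strictly also requires control of $\nabla f_0$. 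Neither is a defect of your argument relative to the paper's.
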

\begin{proof}
Differentiating of \eqref{temp1} with respect to time yields
  \begin{align}
 (u_{tt},v)+\left((1+\norm{\nabla u}^2)\nabla u_t,\nabla v\right)+\left(2(\nabla u_t,\nabla u)\nabla u,\nabla v\right)=(f_t,v).\label{ne5}
 \end{align}
 Substitute $v=u_{tt}$ in \eqref{ne5} to obtain
 \begin{align}
  \norm{u_{tt}}^2+\frac{1}{2}(1+\norm{\nabla u}^2)\frac{d}{dt}\norm{\nabla u_t}^2-2\Big((\nabla u_t,\nabla u)\Delta u,u_{tt}\Big)=(f_t,u_{tt}).\label{ne6}
 \end{align}
On multiplying \eqref{ne6} by $e^{2\alpha t},\alpha>0$ and rewriting it as
\begin{align}
  e^{2\alpha t}\norm{u_{tt}}^2+\frac{1}{2}\frac{d}{dt}(e^{2\alpha t}(1+\norm{\nabla u}^2)\norm{\nabla u_t}^2) &=e^{2\alpha t}(f_t,u_{tt})+2e^{2\alpha t}\Big((\nabla u_t,\nabla u)\Delta u,u_{tt}\Big)\notag\\
  &+e^{2\alpha t}(\nabla u_t,\nabla u)\norm{\nabla u_t}^2 +\alpha e^{2\alpha t}(1+\norm{\nabla u}^2)\norm{\nabla u_t}^2.\label{ne7}
\end{align}
Now using Young's inequality for the first two terms in the right hand side and rewriting it as
\begin{align}
 e^{2\alpha t}\norm{u_{tt}}^2+&\frac{d}{dt}(e^{2\alpha t}(1+\norm{\nabla u}^2)\norm{\nabla u_t}^2)\notag\\
 &\leq 2e^{2\alpha t}\norm{f_t}^2+e^{2\alpha t}\Big(8\norm{\nabla u}^2\norm{\Delta u}^2+2\norm{\nabla u}\norm{\nabla u_t}+2\alpha(1+\norm{\nabla u}^2)\Big)\norm{\nabla u_t}^2.\label{ne8}
\end{align}
 Integrating with respect to time from $0$ to $t$ and multiplying the resulting inequality by $e^{-2\alpha t}$ to obtain
 \begin{align}
   e^{-2\alpha t}\int_{0}^{t}&e^{2\alpha s}\norm{u_{tt}(s)}^2 ds+(1+\norm{\nabla u(t)}^2)\norm{\nabla u_t}^2\notag\\
   &\leq e^{-2\alpha t}(1+\norm{\nabla u(0)}^2)\norm{\nabla u_t(0)}^2+2e^{-2\alpha t}\int_{0}^{t}e^{2\alpha s}\norm{f_s}^2 ds\notag\\
   &+e^{-2\alpha t}\int_{0}^{t}e^{2\alpha s}\Big(8\norm{\nabla u}^2\norm{\Delta u}^2+2\norm{\nabla u}\norm{\nabla u_t(s)}+2\alpha(1+\norm{\nabla u}^2)\Big)\norm{\nabla u_t(s)}^2 ds.\label{ne9}
 \end{align}
 The term inside the bracket of the third term in the right hand side is bounded by Lemmas \ref{2.2}, \ref{2.5} and \ref{nl1}. Therefore the third term in the right hand side is bounded by the lemma \ref{2.4}.
Altogether, we obtain
 \begin{align}
  e^{-2\alpha t}\int_{0}^{t}&e^{2\alpha s}\norm{u_{tt}(s)}^2 ds+(1+\norm{\nabla u(t)}^2)\norm{\nabla u_t}^2\notag\\
  &\leq e^{-2\alpha t}\norm{u_0}^2_{H^3}+C(K_1)\Big(e^{-2\alpha t}\left((1+\norm{\nabla u_0}^2)^2\norm{u_0}^2_{H^2}+\norm{f_0}^2\right)+e^{-2\alpha t}\int_{0}^{t}e^{2\alpha s}\norm{f_s}^2_{-1}ds\Big).\label{ne10}
 \end{align}
 This completes the rest of the proof.
 \end{proof}
% -----------------------------
\begin{remark}
 \begin{enumerate}
 \item When $f=0$, then we obtain $$\norm{u(t)},\norm{\nabla u(t)}=O(e^{-\alpha t})\quad \text{and} \quad{\tau}^\frac{1}{2}(\norm{u_t(t)}+\norm{u(t)}_{H^2})=O(e^{-\alpha t}).$$
 Hence, we derive exponential decay property.
 
 \item When $f\in L^\infty(L^2)$ with $\norm{f}_{L^\infty(L^2)}=O(e^{-\gamma_0 t})$, then for $\alpha_0=min(\alpha,\gamma_0);$ the solution decays exponentially with order $O(e^{-\alpha_0 t}).$
  
 \item If $f\in {L^\infty(L^2)}$, we obtain  regularity results proved in Lemmas \ref{2.1}-\ref{nl2} are valid uniformly in time for $\alpha =0.$
  \end{enumerate}
\end{remark}
% {\bf Remark.} 

 \subsection{Existence and Uniqueness of strong solution}
  Before, proving existence and uniqueness of a strong solution, we first prove the following 
  monotonicty property for our subsequent use.
\begin{lemma}\label{2.6}
For $u$ and $v$ $\in H^1_0$, there holds
 \begin{equation*}
  \left((1+\norm{\nabla u}^2)\nabla u-(1+\norm{\nabla v}^2)\nabla v,\nabla(u-v)\right)\geq \norm{\nabla(u-v)}^2.
 \end{equation*}
 \end{lemma}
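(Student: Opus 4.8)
The plan is to expand the left-hand side, isolate a manifestly nonnegative principal term, and control the remaining cross terms by a convexity (monotonicity) estimate for the map $w \mapsto \norm{\nabla w}^2 \nabla w$. Write $\mathbf{p} = \nabla u$ and $\mathbf{q} = \nabla v$ for brevity. Then
\begin{align*}
\left((1+\norm{\mathbf{p}}^2)\mathbf{p} - (1+\norm{\mathbf{q}}^2)\mathbf{q}, \mathbf{p}-\mathbf{q}\right)
&= \norm{\mathbf{p}-\mathbf{q}}^2 + \left(\norm{\mathbf{p}}^2\mathbf{p} - \norm{\mathbf{q}}^2\mathbf{q}, \mathbf{p}-\mathbf{q}\right).
\end{align*}
So the statement reduces to showing that the second term on the right is nonnegative, i.e. that the vector field $\Phi(\mathbf{x}) = \norm{\mathbf{x}}^2\mathbf{x}$ on $\mathbb{R}^d$ is monotone. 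Integrating over $\Omega$ it suffices to prove the pointwise inequality $(\norm{\mathbf{p}}^2\mathbf{p} - \norm{\mathbf{q}}^2\mathbf{q})\cdot(\mathbf{p}-\mathbf{q}) \geq 0$ for all $\mathbf{p},\mathbf{q}\in\mathbb{R}^d$, since the $L^2$ inner product then inherits the sign after integration.

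For the pointwise step I would expand $(\norm{\mathbf{p}}^2\mathbf{p} - \norm{\mathbf{q}}^2\mathbf{q})\cdot(\mathbf{p}-\mathbf{q}) = \norm{\mathbf{p}}^4 + \norm{\mathbf{q}}^4 - (\norm{\mathbf{p}}^2 + \norm{\mathbf{q}}^2)(\mathbf{p}\cdot\mathbf{q})$. Using $\mathbf{p}\cdot\mathbf{q} \leq \norm{\mathbf{p}}\norm{\mathbf{q}}$ (Cauchy--Schwarz) and then the scalar inequality $\norm{\mathbf{p}}^4 + \norm{\mathbf{q}}^4 \geq (\norm{\mathbf{p}}^2+\norm{\mathbf{q}}^2)\norm{\mathbf{p}}\norm{\mathbf{q}}$ — which itself follows from $a^4 + b^4 - (a^2+b^2)ab = (a-b)^2(a^2+ab+b^2) \geq 0$ for $a,b\geq 0$ — gives the claim. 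Alternatively, one can invoke that $\Phi = \nabla\Psi$ with the convex potential $\Psi(\mathbf{x}) = \tfrac14\norm{\mathbf{x}}^4$, so monotonicity is automatic; I would probably just present the short algebraic chain since it is self-contained.

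Combining, the cross term $\int_\Omega (\norm{\nabla u}^2\nabla u - \norm{\nabla v}^2\nabla v)\cdot\nabla(u-v)\,dx \geq 0$, and hence the left-hand side of the lemma is bounded below by $\norm{\nabla(u-v)}^2$, as asserted. There is no real obstacle here: the only mild subtlety is making sure all integrals are finite, which is immediate since $u,v\in H^1_0(\Omega)$ gives $\nabla u,\nabla v\in L^2$ and the cubic terms are handled by noting we only need the inequality to make sense a.e. pointwise before integrating — or, if one wants the inner product genuinely finite, one restricts to the dense subclass where it is and passes to the limit, though for the intended application (uniqueness via Galerkin with finite-dimensional iterates, or with the a priori bounds already establishing $\nabla u\in L^4$) this is a non-issue.
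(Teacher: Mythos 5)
Your argument rests on a misreading of the notation: in this paper $\norm{\nabla u}^2$ is the squared $L^2(\Omega)$-norm $\int_\Omega|\nabla u|^2\,dx$ --- a single number --- not the pointwise Euclidean length $|\nabla u(x)|^2$. This is the whole point of the Kirchhoff (nonlocal) model: the diffusion coefficient $1+\norm{\nabla u}^2$ is constant in $x$. Consequently the cross term $\left(\norm{\nabla u}^2\nabla u-\norm{\nabla v}^2\nabla v,\nabla(u-v)\right)$ is \emph{not} $\int_\Omega\bigl(|\nabla u|^2\nabla u-|\nabla v|^2\nabla v\bigr)\cdot\nabla(u-v)\,dx$, so the reduction ``prove a pointwise inequality in $\mathbb{R}^d$ and integrate'' does not address the statement; it proves monotonicity of a different, local ($4$-Laplacian-type) operator. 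Your closing worry about needing $\nabla u\in L^4$ to make the cubic terms integrable is a symptom of this misreading: under the correct nonlocal interpretation every term is trivially finite for $u,v\in H^1_0$, because the cubic factor is just the finite number $\norm{\nabla u}^2$ multiplying an ordinary $L^2$ inner product, and the lemma indeed holds on all of $H^1_0$ with no extra integrability.

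That said, the algebraic core of your computation is exactly the right one and survives the reinterpretation, because it uses nothing beyond bilinearity, Cauchy--Schwarz, and the scalar identity $a^4+b^4-(a^2+b^2)ab=(a-b)^2(a^2+ab+b^2)\ge0$: these hold in any real inner product space, in particular in $L^2(\Omega)^d$. Taking $\mathbf p=\nabla u$ and $\mathbf q=\nabla v$ as \emph{elements of $L^2(\Omega)^d$} (not their pointwise values), your expansion reads
\begin{equation*}
\left(\norm{\nabla u}^2\nabla u-\norm{\nabla v}^2\nabla v,\nabla(u-v)\right)=\norm{\nabla u}^4+\norm{\nabla v}^4-\left(\norm{\nabla u}^2+\norm{\nabla v}^2\right)(\nabla u,\nabla v)\ge 0,
\end{equation*}
which gives the lemma. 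With that one-line repair your route is correct and in fact cleaner than the paper's, which instead isolates $(\norm{\nabla u}^2-\norm{\nabla v}^2)(\nabla v,\nabla(u-v))$, rewrites it via the identity $(\nabla(u+v),\nabla(u-v))=\norm{\nabla u}^2-\norm{\nabla v}^2$, and absorbs the remainder with Young's inequality. So: fix the interpretation of the norm, drop the pointwise and integrability discussion, and state the argument directly in the Hilbert space $L^2(\Omega)^d$.
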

 \begin{proof}
Note that
\begin{align*}
 \left((1+\norm{\nabla u}^2)\nabla u-(1+\norm{\nabla v}^2)\nabla v,\nabla(u-v)\right)=&\norm{\nabla(u-v)}^2+(\norm{\nabla u}^2\nabla u-\norm{\nabla v}^2\nabla v,\nabla(u-v))\\
  =&\norm{\nabla(u-v)}^2+\left(\norm{\nabla u}^2\nabla(u-v),\nabla(u-v)\right)\\+
  &\left((\norm{\nabla u}^2-\norm{\nabla v}^2)\nabla v,\nabla(u-v)\right)\\
  =&\norm{\nabla(u-v)}^2+\norm{\nabla u}^2\norm{\nabla(u-v)}^2\\+
  &(\norm{\nabla u}^2-\norm{\nabla v}^2)\left(\nabla v,\nabla(u-v)\right).
 \end{align*}
Now the term $(\norm{\nabla u}^2-\norm{\nabla v}^2)\left(\nabla v,\nabla(u-v)\right)$ can be written as 
 \begin{align*}
  &(\norm{\nabla u}^2-\norm{\nabla v}^2)\left(\nabla v,\nabla(u-v)\right)\\
  =&(\norm{\nabla u}^2-\norm{\nabla v}^2)\left(\nabla(u+v),\nabla(u-v)\right)-(\norm{\nabla u}^2-\norm{\nabla v}^2)(\nabla u,\nabla(u-v))\\
  \geq&(\norm{\nabla u}^2-\norm{\nabla v}^2)^2-(\norm{\nabla u}^2-\norm{\nabla v}^2)\norm{\nabla u}\norm{\nabla(u-v)}\\
  \geq&(\norm{\nabla u}^2-\norm{\nabla v}^2)^2-\frac{1}{2}(\norm{\nabla u}^2-\norm{\nabla v}^2)^2-\frac{1}{2}\norm{\nabla u}^2\norm{\nabla(u-v)}^2\\
  =&\frac{1}{2}(\norm{\nabla u}^2-\norm{\nabla v}^2)^2-\frac{1}{2}\norm{\nabla u}^2\norm{\nabla(u-v)}^2.
 \end{align*}
Therefore,
\begin{align*}
  \left((1+\norm{\nabla u}^2)\nabla u-(1+\norm{\nabla v}^2)\nabla v,\nabla(u-v)\right)\geq&\norm{\nabla(u-v)}^2+\frac{1}{2}(\norm{\nabla u}^2-\norm{\nabla v}^2)^2\\
  +&\frac{1}{2}\norm{\nabla u}^2\norm{\nabla(u-v)}^2\geq\norm{\nabla(u-v)}^2.
\end{align*}
This completes the rest of the proof.
 \end{proof}
 
 \begin{theorem}
  Suppose that $u_0\in H^1_0(\Omega)$ and $f\in L^\infty(L^2).$ Then for any finite $T>0$, the problem \eqref{eq1}-\eqref{eq3} admits a unique global strong  solution $u$ 
  for $t\in (0,T]$ satisfying $$u\in C([0,T],H^1_0)\cap L^2([0,T],H^2), u_t\in L^2([0,T],L^2).$$ 
 \end{theorem}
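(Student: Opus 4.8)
The plan is to construct the solution by a Bubnov–Galerkin procedure, pass to the limit using the a priori bounds of Subsection 2.1 together with a compactness argument, and derive uniqueness from the monotonicity Lemma \ref{2.6}.

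First I would fix the $L^2$-orthonormal eigenfunctions $\{\phi_j\}_{j\ge1}$ of $-\Delta$ under homogeneous Dirichlet data, set $V_m=\mathrm{span}\{\phi_1,\dots,\phi_m\}$, and seek $u_m(t)=\sum_{j=1}^m c_{jm}(t)\phi_j$ with $u_m(0)=P_m u_0$ ($P_m$ the $L^2$-projection onto $V_m$) solving
\begin{equation*}
(u_{m,t},\chi)+\bigl(1+\norm{\nabla u_m}^2\bigr)(\nabla u_m,\nabla\chi)=(f,\chi)\qquad\forall\,\chi\in V_m .
\end{equation*}
Written for the coefficient vector this is an ODE system whose right-hand side is polynomial, hence locally Lipschitz, so Picard–Lindel\"of gives a unique solution on a maximal interval $[0,t_m)$.

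Next I would derive bounds on $u_m$ uniform in $m$. Since $-\Delta u_m\in V_m$ and $u_{m,t}\in V_m$, the choices $\chi=u_m$, $\chi=-\Delta u_m$ and $\chi=u_{m,t}$ are admissible, and repeating verbatim the computations of Lemmas \ref{2.1}, \ref{2.2}, \ref{2.3} with $u$ replaced by $u_m$ (using $\norm{\nabla P_m u_0}\le\norm{\nabla u_0}$) yields, on any finite $[0,T]$,
\begin{equation*}
\norm{u_m}_{L^\infty(0,T;H^1_0)}+\norm{u_m}_{L^2(0,T;H^2)}+\norm{u_{m,t}}_{L^2(0,T;L^2)}\le C,
\end{equation*}
with $C$ independent of $m$; in particular the coefficient vector cannot blow up, so $t_m=T$. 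By reflexivity a subsequence satisfies $u_m\rightharpoonup u$ weak-$\ast$ in $L^\infty(0,T;H^1_0)$, weakly in $L^2(0,T;H^2)$, and $u_{m,t}\rightharpoonup u_t$ weakly in $L^2(0,T;L^2)$; since $H^2\cap H^1_0\hookrightarrow\hookrightarrow H^1_0\hookrightarrow L^2$, the Aubin–Lions lemma gives $u_m\to u$ strongly in $L^2(0,T;H^1_0)$, whence (along a further subsequence) $\norm{\nabla u_m(t)}^2\to\norm{\nabla u(t)}^2$ for a.e.\ $t$ and in a dominated fashion. This is exactly what is needed to pass to the limit in the cubic nonlocal term $\bigl(1+\norm{\nabla u_m}^2\bigr)(\nabla u_m,\nabla\chi)$ — first for a fixed $\chi\in V_{m_0}$, then for all $\chi\in H^1_0$ by density — so $u$ satisfies \eqref{temp1}. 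Finally $u\in L^2(0,T;H^2\cap H^1_0)$ with $u_t\in L^2(0,T;L^2)$ forces $u\in C([0,T];H^1_0)$, and passing to the limit in $u_m(0)=P_mu_0$ gives $u(0)=u_0$.

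For uniqueness I would take two such solutions $u,v$, set $w=u-v$, subtract the weak formulations and test with $w(t)$, obtaining
\begin{equation*}
\tfrac12\,\tfrac{d}{dt}\norm{w}^2+\Bigl(\bigl(1+\norm{\nabla u}^2\bigr)\nabla u-\bigl(1+\norm{\nabla v}^2\bigr)\nabla v,\nabla w\Bigr)=0;
\end{equation*}
Lemma \ref{2.6} bounds the second term below by $\norm{\nabla w}^2\ge0$, so $\frac{d}{dt}\norm{w}^2\le0$ with $w(0)=0$, hence $w\equiv0$ on $[0,T]$. The main obstacle is the passage to the limit in the nonlocal cubic term: weak convergence of $u_m$ alone is insufficient, and one must invoke the strong $L^2(0,T;H^1_0)$ convergence from Aubin–Lions (which relies on both the $L^2(H^2)$ bound and the $L^2(L^2)$ bound on $u_{m,t}$) in order to identify $\lim_m\norm{\nabla u_m(t)}^2$ with $\norm{\nabla u(t)}^2$; everything else is a routine adaptation of the estimates already established.
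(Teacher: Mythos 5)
Your proposal is correct and follows essentially the same route as the paper: the paper simply cites the Bubnov--Galerkin method with Lions' compactness argument for existence (which you flesh out in full, correctly handling the nonlocal term via Aubin--Lions), and its uniqueness proof is exactly your argument via the monotonicity Lemma \ref{2.6}.
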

\begin{proof}
For a proof of existence, one can apply Bubnov-Galerkin method and compactness arguments of Lions in a standard way, see also \cite{chipot2}. 

For uniqueness, we prove it by contradiction. Assume contrary, then there exist two distinct solutions 
$u_1$ and $u_2$ of the problem \eqref{eq1} satisfying 
%$u_i , i=1,2 $ satisfy
 \begin{align*}
  (u_{it},v)+\left((1+\norm{\nabla u_i}^2)\nabla u_i,\nabla v\right)=(f,v), i=1,2.
 \end{align*}
 With $w=u_1-u_2$, $w$ now satisfies 
 \begin{align*}
  (w_t,v)+\left((1+\norm{\nabla u_1}^2)\nabla u_1-(1+\norm{\nabla u_2}^2)\nabla u_2,\nabla v\right)=0
 \end{align*}
 Substitute $v=u_1-u_2=w$ to obtain
 \begin{align*}
  (w_t,w)+\left((1+\norm{\nabla u_1}^2)\nabla u_1-(1+\norm{\nabla u_2}^2)\nabla u_2,\nabla (u_1-u_2)\right)=0
 \end{align*}
 Using monotonicity property given in Lemma \ref{2.6}, we observe that
 \begin{align*}
  \left((1+\norm{\nabla u_1}^2)\nabla u_1-(1+\norm{\nabla u_2}^2)\nabla u_2,\nabla (u_1-u_2)\right)\geq\norm{\nabla(u_1-u_2)}^2=\norm{\nabla w}^2\geq 0.
 \end{align*}
 Consequently,
 \begin{align*}
  \frac{d}{dt}\norm{w}^2\leq 0.
 \end{align*}
Since $w(0)=0$, it follows that $w=0$ which leads to a contradiction. Hence, the solution is unique. This completes the rest of the proof.
\end{proof}

As a consequence of Lemmas ~\ref{2.1} and  \ref{2.2}, we obtain the following results:
 From \eqref{rm1}, we note that the ball $B_{\rho_0}(0)$ \text{in} $L^2(\Omega)$ is absorbing in $L^2(\Omega)$ 
 with $\rho_0=\frac{1}{\sqrt{\alpha\lambda_1}}\norm{f}_{L^\infty(L^2)}.$ Specially, for any $R>0,$ there exists
 $t_0= t_0(R,\rho_0) > 0$ such that for $t\geq t_0=\frac{1}{2\alpha}log\left(\frac
 {2R^2-{\rho}^2_0}{{\rho}^2_0}\right)$
 $$B_{R}(0)\subset B_{\rho_0}(0).$$

To provide a quick sketch of its proof, note that for any $R>0$ with $u_0\in B_R(0)$, 
\begin{align*}
\norm{u(t)}\leq e^{-2\alpha t}R^2+\frac{1}{2}{\rho_0}^2(1-e^{-2\alpha t})=e^{-2\alpha t}(R^2-\frac{1}{2}{\rho_0}^2)+\frac{1}{2}{\rho_0}^2 \leq {\rho_0}^2,
\end{align*}
provided $e^{-2\alpha t}(R^2-\frac{1}{2}{\rho_0}^2)\leq \frac{1}{2}{\rho_0}^2$, that is, $e^{2\alpha t}\geq \frac{2R^2-{\rho_0}^2}{{\rho_0}^2}$. Now  taking $log$ both sides, it follows that, $t\geq \frac{1}{2\alpha}\;log\left(\frac{2R^2-{\rho_0}^2}{{\rho_0}^2}\right)=t_0.$
Hence, $B_{\rho_0}(0)$ is an absorbing ball in $L^2(\Omega).$
Similarly from \eqref{temp3} in Lemma \ref{2.2}, it follows that for any $R>0$ and $u_0\in B_{R}(0) \subset 
H^1_0,$ 
$$u(t)\leq \rho_1 \quad \text{for} \;\;t\geq t_1=\frac{1}{2\alpha}\;log\left(\frac{2R^2
-{\rho_1}^2}{{\rho_1}^2}\right),$$ 
where 
$\rho_1=\frac{1}{\sqrt{\alpha}}\norm{f}_{L^\infty(L^2)}.$ Therefore, $B_{\rho_1}(0)$ is an absorbing set in 
$H^1_0(\Omega)$ for the equation \eqref{temp1}.
%\end{remark}
Thus, we have the following result
\begin{theorem}
The problem \eqref{temp1}-\eqref{temp3} admits a global attractor in $L^2$  as well in $H^1_0.$
\end{theorem}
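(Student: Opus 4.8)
The plan is to recast \eqref{temp1} (with $f$ independent of $t$) as an autonomous dynamical system and then invoke the classical existence theorem for global attractors (Hale, Temam): if a semigroup $\{S(t)\}_{t\ge 0}$ on a complete metric space $H$ is continuous, possesses a bounded absorbing set $\mathcal B\subset H$, and is asymptotically compact (equivalently, admits a compact absorbing set), then $\mathcal A:=\omega(\mathcal B)=\bigcap_{s\ge 0}\overline{\bigcup_{t\ge s}S(t)\mathcal B}$ is a connected global attractor, i.e.\ it is compact, fully invariant, and attracts every bounded subset of $H$. I would run this argument twice, once with $H=L^2(\Omega)$ and once with $H=H^1_0(\Omega)$, using as phase-space machinery exactly the bounds proved in Lemmas~\ref{2.1}--\ref{2.5}.

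\emph{The semigroup and the absorbing sets.} By the global well-posedness theorem just established, $S(t)u_0:=u(t)$ is well defined for $u_0\in H^1_0(\Omega)$, $S(0)=I$, and, since $f$ is time-independent, $S(t+s)=S(t)S(s)$. Continuity in $u_0$ in the $L^2$–norm follows at once from the uniqueness computation: with $w=S(t)u_0-S(t)v_0$, Lemma~\ref{2.6} gives $\tfrac{d}{dt}\norm{w}^2\le -2\norm{\nabla w}^2\le 0$, so $S(t)$ is a contraction on $L^2(\Omega)$; in $H^1_0$ a standard energy estimate for $w$, using the uniform bounds of Lemmas~\ref{2.1}--\ref{2.2} to absorb the nonlinear terms, yields $\tfrac{d}{dt}\norm{\nabla w}^2\le C(T)\norm{\nabla w}^2$ on $[0,T]$, and joint continuity in $(t,u_0)$ then follows from $u\in C([0,T],H^1_0)$. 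The required absorbing sets are already available: \eqref{rm1} exhibits the ball $B_{\rho_0}(0)\subset L^2$, $\rho_0=(\alpha\lambda_1)^{-1/2}\norm{f}_{L^\infty(L^2)}$, as absorbing in $L^2(\Omega)$ with an explicit entering time, and \eqref{temp3} in Lemma~\ref{2.2} does the same for $B_{\rho_1}(0)\subset H^1_0$, $\rho_1=\alpha^{-1/2}\norm{f}_{L^\infty(L^2)}$.

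\emph{Compactness.} This is where parabolic smoothing enters. From the averaged bound $e^{-2\alpha t}\int_0^t e^{2\alpha s}\norm{\Delta u(s)}^2\,ds\le\widehat K_1(t)$ in Lemma~\ref{2.2} one selects, for $u_0$ ranging over a bounded set of $H^1_0$, some $t^\ast\in(0,1]$ at which $\norm{\Delta u(t^\ast)}$ is bounded by a constant depending only on that set; restarting from $t^\ast$ with $H^2\cap H^1_0$ data and using Lemmas~\ref{2.4}--\ref{2.5} (equivalently the weighted estimate $\tau\,\norm{u(\tau)}_{H^2}^2\le C$ recorded after Lemma~\ref{2.5}) gives $\sup_{t\ge 1}\norm{u(t)}_{H^2}\le C(R)$ uniformly for $u_0\in B_R$. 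Since $\Omega$ is convex polygonal/polyhedral, the embeddings $H^2\cap H^1_0(\Omega)\hookrightarrow H^1_0(\Omega)$ and $H^2\cap H^1_0(\Omega)\hookrightarrow L^2(\Omega)$ are compact, so $\mathcal K:=\overline{S(1)B_{\rho_1}(0)}$ is a compact set absorbing every bounded subset of $H^1_0$ (a fortiori of $L^2$). The abstract theorem then produces the global attractor $\mathcal A=\omega(\mathcal B)$ in both $L^2(\Omega)$ and $H^1_0(\Omega)$; by the $H^2$–bound just obtained the two attractors in fact coincide.

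The genuine obstacle is the \emph{uniform} smoothing step — converting the time-averaged $H^2$ control of Lemma~\ref{2.2} into a pointwise bound $\sup_{t\ge 1}\norm{u(t)}_{H^2}\le C(R)$ that is uniform over bounded families of merely $L^2$– (resp.\ $H^1_0$–) initial data; the remaining verifications are routine. When $f\in L^\infty(L^2)$ genuinely depends on $t$, \eqref{temp1} generates a two-parameter family of processes rather than a semigroup, and the statement should be interpreted as the existence of a uniform attractor: the bounds in Lemmas~\ref{2.1}--\ref{2.2} are uniform in the initial time, hence give a uniformly absorbing set, the same smoothing gives uniform asymptotic compactness, and translation-compactness of $f$ in $L^2_{\mathrm{loc}}(\mathbb R_+;L^2(\Omega))$ completes the argument along the lines of Chepyzhov--Vishik.
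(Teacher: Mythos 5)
Your proposal is correct and follows the same basic route as the paper: the absorbing balls $B_{\rho_0}(0)\subset L^2$ and $B_{\rho_1}(0)\subset H^1_0$ extracted from \eqref{rm1} and \eqref{temp3} are exactly the ones the paper constructs immediately before stating the theorem. Where you go beyond the paper is in actually supplying the remaining hypotheses of the abstract attractor theorem: the paper stops at the absorbing sets and asserts the conclusion, whereas you verify continuity of the semigroup (the $L^2$ contraction via Lemma~\ref{2.6} is a nice touch) and, more importantly, asymptotic compactness, by converting the time-averaged $\Delta u$ control of Lemma~\ref{2.2} into a pointwise $H^2$ bound for $t\ge 1$ through a mean-value selection of $t^\ast$ and a restart using Lemmas~\ref{2.4}--\ref{2.5}, followed by the compact embedding $H^2\cap H^1_0\hookrightarrow H^1_0$. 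This smoothing step is genuinely needed (a bounded absorbing set alone does not yield an attractor in an infinite-dimensional phase space) and is left implicit in the paper, so your version is the more complete argument; your closing remark that for genuinely time-dependent $f\in L^\infty(L^2)$ one only gets a process and should speak of a uniform attractor is also a legitimate caveat that the paper does not address.
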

\section{Semidiscrete Galerkin Method}
This section deals with semidiscrete Galerkin approximation keeping time variable continuous and proves
optimal error estimates.

 Given a regular triangulation  $\mathcal{T}_h$ of $\overline{\Omega}$,
 let $h_K=\text{diam}(K)$ for all $K\in \mathcal{T}_h$ and $h=\displaystyle\max_{ K\in \mathcal{T}_h} h_K$\\
 Set
 $$V_h=\left\{v_h\in C^0(\overline{\Omega} ):\hspace{0.1cm} v_h\Big|_K \in \mathcal{P}_{1}(K) \quad \forall\quad K\in \mathcal{T}_{h}\hspace{0.1cm}\mbox{with}\hspace{0.1cm} v_h=0 \hspace{0.1cm}\mbox{on}\hspace{0.1cm}\partial \Omega\right\}.$$

Under an additional assumption that the family of  triangulation $\mathcal{T}_h$  is quasi-uniform, the following
inverse inequality holds
$$\norm{\nabla \chi}\leq Ch^{-1}\norm{\chi} \quad \forall \chi\in V_h.$$
 Now the semidiscrete approximation $u_h(t)$ of\eqref{temp1} is to find $u_h(t)\in V_h$ for $t>0$ such that 
\begin{equation}\label{eq4}
  (u_{ht},\chi)+\left(1+\norm{\nabla u_h}^2_{L^2(\Omega)}\right)(\nabla u_h,\nabla \chi)=(f,\chi) \qquad \forall \chi \in V_h
\end{equation}
with $u_h(0)=u_{0h}\in V_h$ to be defined later.
\begin{theorem}
 For any $u_{0h}\in V_h$, there exists a unique solution $u_h\in C^1([0,\infty];V_h)$ satisfying \eqref{eq4}.
\end{theorem}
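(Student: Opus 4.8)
The statement asserts global existence and uniqueness of the semidiscrete solution $u_h\in C^1([0,\infty);V_h)$. The plan is to set up the problem as a system of ODEs in $\mathbb{R}^N$, invoke Picard--Lindel\"of for local existence, and then use the \emph{a priori} bounds (which are uniform in time, exactly analogous to Lemma \ref{2.1} and Lemma \ref{2.2} at the discrete level) to rule out finite-time blow-up. First I would fix a basis $\{\phi_j\}_{j=1}^{N}$ of $V_h$ and write $u_h(t)=\sum_{j=1}^{N}\xi_j(t)\phi_j$, so that \eqref{eq4} becomes $M\dot\xi = F(\xi,t)$, where $M$ is the (invertible, positive definite) mass matrix with entries $(\phi_j,\phi_i)$, and $F(\xi,t)$ collects the term $-\bigl(1+\norm{\nabla u_h}^2\bigr)(\nabla u_h,\nabla\phi_i)$ together with $(f(t),\phi_i)$. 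The map $\xi\mapsto M^{-1}F(\xi,t)$ is a polynomial (cubic) in $\xi$, hence locally Lipschitz in $\xi$, and is continuous (indeed measurable and locally bounded, if $f\in L^\infty(L^2)$) in $t$; Carath\'eodory's / Picard's theorem then gives a unique maximal solution $\xi\in C^1$ on some interval $[0,t^\ast)$.

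Next I would derive the discrete energy estimate that prevents blow-up: choosing $\chi=u_h$ in \eqref{eq4} gives
\begin{equation*}
\frac12\frac{d}{dt}\norm{u_h}^2 + \bigl(1+\norm{\nabla u_h}^2\bigr)\norm{\nabla u_h}^2 = (f,u_h)\le \frac{1}{2\lambda_1}\norm{f}^2 + \frac12\norm{\nabla u_h}^2,
\end{equation*}
and after absorbing, together with Poincar\'e's inequality (which holds on $V_h\subset H^1_0$ with the same constant $\lambda_1$), this yields a bound on $\norm{u_h(t)}$ and on $\int_0^t\norm{\nabla u_h(s)}^2\,ds$ that is finite on every finite interval (in fact uniform in $t$), mirroring the computation in Lemma \ref{2.1}. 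Since all norms on the finite-dimensional space $V_h$ are equivalent, a bound on $\norm{u_h(t)}$ is a bound on $|\xi(t)|$; in particular $\xi$ stays in a bounded set as $t\uparrow t^\ast$, so the maximal solution cannot escape in finite time and $t^\ast=\infty$. This gives $u_h\in C([0,\infty);V_h)$, and then $\dot\xi = M^{-1}F(\xi,\cdot)$ shows $\xi\in C^1$, i.e. $u_h\in C^1([0,\infty);V_h)$.

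The uniqueness part is already contained in the Picard uniqueness on the maximal interval, but I could alternatively record the continuous-dependence argument directly: if $u_h,\tilde u_h$ both solve \eqref{eq4} with the same data, then $w_h=u_h-\tilde u_h$ satisfies $(w_{ht},\chi)+\bigl((1+\norm{\nabla u_h}^2)\nabla u_h-(1+\norm{\nabla\tilde u_h}^2)\nabla\tilde u_h,\nabla\chi\bigr)=0$, and taking $\chi=w_h$ and applying the discrete analogue of the monotonicity Lemma \ref{2.6} gives $\tfrac{d}{dt}\norm{w_h}^2\le 0$, whence $w_h\equiv 0$. The only genuine subtlety — and the step I would treat most carefully — is the passage from local to global: one must be sure the \emph{a priori} bound is on $\norm{u_h(t)}$ (or $\norm{\nabla u_h(t)}$) itself and not merely on a time integral, and that the constants do not degenerate as $h\to 0$; this is why Poincar\'e's inequality on $V_h$ with the $h$-independent constant $\lambda_1$ is the key ingredient. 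Everything else is routine finite-dimensional ODE theory.
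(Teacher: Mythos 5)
Your proposal is correct and follows essentially the same route as the paper: local existence and uniqueness from Picard's theorem applied to the finite-dimensional nonlinear ODE system, followed by a continuation argument using the energy bound obtained by testing with $\chi=u_h$ (the paper states this bound in its exponentially weighted form, as in Lemma \ref{2.1}, but the content is the same). Your additional details — the explicit mass-matrix formulation, the local Lipschitz continuity of the cubic nonlinearity, the norm-equivalence step, and the alternative monotonicity-based uniqueness argument — are all consistent with, and merely flesh out, the paper's sketch.
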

\begin{proof}
Since $V_h$ is finite dimensional, \eqref{eq4} leads to a 
a system of nonlinear ODE's. An appeal to the Picard's theorem yields the existence of a unique solution $u_h(t)$
locally, that is, there exists $t=t^*>0$ such that \eqref{eq4} has a unique solution $u_h(t)$ for $t\in(0,t^{*})$.\\
For global existence, we use continuation argument provided $\norm{u_h(t)}$ is bounded for all $t>0$. Now
choose $\chi=u_h$ in \eqref{eq4} to obtain as in Lemma \ref{temp1} with $0<\alpha <\frac{\lambda_1}{2}$
\begin{align}
 \norm{u_h(t)}^2+\min(\beta,2)e^{-2\alpha t}&\int_{0}^{t}(\norm{\nabla u_h(s)}^2+\norm{\nabla u_h(s)}^4)ds\notag\\
 &\leq e^{-2\alpha t}\norm{u_{0h}}^2+\frac{1}{2\alpha\lambda_1}\norm{f}^2_{L^\infty(L^2)}(1-e^{-2\alpha t}),\label{eq3.2}
\end{align}
 where $\beta=(1-\dfrac{2\alpha}{\lambda_1})>0$. Note $u_{0h}$ an approximation of $u_0$ in $V_h$ and $\norm{u_{0h}}$ can be made bounded by $\norm{u_0}$. The
result on global existence of a unique solution $u_h(t)$ of \eqref{eq4} now follows for all $t>0$. This completes
the rest of the proof.
\end{proof}
As a consequence, the following result holds as in the continuous case.
\begin{proposition}
There exists a bounded absorbing set $B_{R}(0)$ in $V_h$ for \eqref{eq4}, that is, for $R>0$ and $u_{0h}
\in B_R(0),$ there exists $t=t_0(\norm{u_{0h}})$ such that for $t\geq t_0$, $u_h(t)\in B_{\rho_{0}}(0)$, where 
$\rho_0=\dfrac{1}{\sqrt{\alpha\lambda_1}}\norm{f}_{L^\infty(L^2)}.$
\end{proposition}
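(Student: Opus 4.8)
The plan is to mimic, at the discrete level, the argument already used in the continuous case right before Theorem~2.2. The target statement is the existence of a bounded absorbing set $B_{\rho_0}(0)$ in $V_h$ for the semidiscrete problem \eqref{eq4}, with $\rho_0=\tfrac{1}{\sqrt{\alpha\lambda_1}}\norm{f}_{L^\infty(L^2)}$. The single ingredient I actually need is the discrete a priori bound \eqref{eq3.2}, which was established in the previous theorem by choosing $\chi=u_h$ and repeating the exponential-weight computation of Lemma~\ref{2.1}. In fact I want the slightly sharper $L^2$-only consequence, the discrete analogue of \eqref{rm1}: using Poincar\'e's inequality on the $(f,u_h)$ term exactly as in the Remark after Lemma~\ref{2.1}, one gets
\[
 \norm{u_h(t)}^2\le e^{-2\alpha t}\norm{u_{0h}}^2+\frac{1}{2\alpha\lambda_1}\norm{f}^2_{L^\infty(L^2)}\bigl(1-e^{-2\alpha t}\bigr),
\]
which is what \eqref{eq3.2} already records (dropping the nonnegative gradient integral on the left).

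Next I would run the same absorbing-ball bookkeeping as in the continuous case. Write $\rho_0^2=\tfrac{1}{\alpha\lambda_1}\norm{f}^2_{L^\infty(L^2)}$, so the bound reads
\[
 \norm{u_h(t)}^2\le e^{-2\alpha t}\norm{u_{0h}}^2+\tfrac{1}{2}\rho_0^2\bigl(1-e^{-2\alpha t}\bigr)
 = e^{-2\alpha t}\bigl(\norm{u_{0h}}^2-\tfrac{1}{2}\rho_0^2\bigr)+\tfrac{1}{2}\rho_0^2 .
\]
Given $R>0$ and $u_{0h}\in B_R(0)$, the right-hand side is $\le\rho_0^2$ as soon as $e^{-2\alpha t}(R^2-\tfrac12\rho_0^2)\le\tfrac12\rho_0^2$, i.e. $e^{2\alpha t}\ge (2R^2-\rho_0^2)/\rho_0^2$; taking logarithms gives $t\ge t_0:=\tfrac{1}{2\alpha}\log\!\bigl((2R^2-\rho_0^2)/\rho_0^2\bigr)$ (and $t_0\le 0$, so any $t\ge0$ works, when $R\le\rho_0$). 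Hence for all $t\ge t_0$ we have $u_h(t)\in B_{\rho_0}(0)$, which is exactly the claimed absorbing property; note $t_0$ depends only on $R$ (equivalently on $\norm{u_{0h}}$) and on $\rho_0$, as asserted.

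There is essentially no obstacle here: the only subtlety is making sure the discrete initial data behaves, i.e. that $\norm{u_{0h}}$ can be controlled by $\norm{u_0}$ (or simply that one works directly with $\norm{u_{0h}}$ in the definition of $t_0$), which is already guaranteed by the choice of $u_{0h}$ as an approximation of $u_0$ in $V_h$ discussed in the preceding theorem. The argument is purely an exponential-Gronwall/decay estimate on the scalar quantity $\norm{u_h(t)}^2$ and does not require the inverse inequality, quasi-uniformity, or any structure of $V_h$ beyond $u_h\in V_h$ being an admissible test function in \eqref{eq4}. So the proof is a one-line reduction to \eqref{eq3.2} followed by the same elementary manipulation used for the continuous absorbing ball, and I would present it exactly in that form.
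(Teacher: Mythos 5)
Your argument is correct and is essentially the paper's own: the paper derives the proposition as an immediate consequence of the discrete bound \eqref{eq3.2}, repeating verbatim the absorbing-ball manipulation it sketched for the continuous case before Theorem 2.2, exactly as you do. No gaps.
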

Thus, as in continuous case, the following theorem holds.
\begin{theorem}
 The equation \eqref{eq4} has a global attractor $\mathcal{A}_h$ which $attracts$ bounded set in $V_h.$
\end{theorem}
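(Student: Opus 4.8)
The plan is to show that the semidiscrete dynamical system generated by \eqref{eq4} on $V_h$ possesses a global attractor by verifying the two standard hypotheses of the classical attractor existence theorem (see e.g. Temam, \emph{Infinite-Dimensional Dynamical Systems in Mechanics and Physics}): the solution map defines a continuous semigroup $\{S_h(t)\}_{t\ge 0}$ on $V_h$, and this semigroup admits a bounded absorbing set together with an appropriate compactness (asymptotic smoothing) property. Since $V_h$ is finite-dimensional, the compactness requirement is automatic, so the whole burden reduces to (i) well-definedness and continuity of $S_h(t)$ and (ii) existence of an absorbing ball — both of which have essentially already been established in the excerpt.

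First I would invoke the preceding theorem on global existence and uniqueness of $u_h\in C^1([0,\infty);V_h)$ to define $S_h(t)u_{0h}:=u_h(t)$, and note the semigroup property $S_h(t+s)=S_h(t)S_h(s)$ from uniqueness. Continuity of $S_h(t)$ in the initial datum (for each fixed $t$) follows by a Gronwall argument on the difference $w_h=S_h(t)u_{0h}^{(1)}-S_h(t)u_{0h}^{(2)}$: subtract the two copies of \eqref{eq4}, test with $\chi=w_h$, and use the monotonicity-type estimate from Lemma \ref{2.6} (which holds on all of $H^1_0$, hence on $V_h\subset H^1_0$) to get $\frac{d}{dt}\norm{w_h}^2\le 0$, so in fact $\norm{S_h(t)u_{0h}^{(1)}-S_h(t)u_{0h}^{(2)}}\le\norm{u_{0h}^{(1)}-u_{0h}^{(2)}}$ uniformly in $t$; this gives Lipschitz continuity, which is more than enough. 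Continuity in $t$ is immediate from $u_h\in C^1$.

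Next, the existence of a bounded absorbing set is exactly the content of the Proposition immediately preceding this theorem: for any $R>0$ and $u_{0h}\in B_R(0)\subset V_h$, the estimate \eqref{eq3.2} yields $u_h(t)\in B_{\rho_0}(0)$ for all $t\ge t_0(\norm{u_{0h}})$, with $\rho_0=\frac{1}{\sqrt{\alpha\lambda_1}}\norm{f}_{L^\infty(L^2)}$ independent of $h$ and of the initial data. Thus $B_{\rho_0}(0)$ is a bounded absorbing set in $V_h$. Since $B_{\rho_0}(0)$ is a closed bounded subset of the finite-dimensional space $V_h$, it is compact, so the semigroup is (trivially) asymptotically compact: for any bounded sequence of initial data and times $t_n\to\infty$, the sequence $S_h(t_n)u_{0h}^{(n)}$ eventually lies in the compact set $B_{\rho_0}(0)$ and hence has a convergent subsequence.

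With a continuous semigroup and a compact absorbing set, the abstract theorem produces the global attractor $\mathcal{A}_h=\bigcap_{s\ge 0}\overline{\bigcup_{t\ge s}S_h(t)B_{\rho_0}(0)}$, which is compact, invariant ($S_h(t)\mathcal{A}_h=\mathcal{A}_h$), and attracts every bounded subset of $V_h$. I do not anticipate a genuine obstacle here: the only point requiring any care is the uniform-in-time Lipschitz continuity of $S_h(t)$, and even that is delivered cleanly by Lemma \ref{2.6}; everything else is a direct citation of the Proposition and of the finite-dimensionality of $V_h$. One could add the remark that, because \eqref{eq3.2} holds with constants independent of $h$, the attractors $\mathcal{A}_h$ are uniformly bounded in $V_h$, which is the natural bridge to the error analysis of the next section.
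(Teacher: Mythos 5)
Your proposal is correct and follows essentially the same route as the paper, which states the result without proof, simply asserting that it follows "as in the continuous case" from the absorbing-set Proposition. Your writeup supplies exactly the standard details the paper leaves implicit (semigroup continuity via the monotonicity of Lemma \ref{2.6}, compactness of the absorbing ball from finite-dimensionality of $V_h$, and the abstract attractor theorem), so it is a faithful completion rather than a different argument.
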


\subsection{A priori bounds}
 We now introduce {\it{discrete Laplacian}} $\Delta_h:V_h\longrightarrow V_h$ by 
  \begin{align}\label{temp6}
  (-\Delta_h v_h,w_h)=(\nabla v_h,\nabla w_h) \qquad \forall v_h, w_h\in V_h.
  \end{align}
\begin{lemma}\label{lemma3.1}
 Let $u_{0h}$ is an approximation of $u_0$ and $f\in L^\infty(L^2)$. Then for $0<\alpha<\frac{\lambda_1}{2}$, there holds:
 \begin{align*}
  \norm{\nabla u_h(t)}^2+\beta e^{-2\alpha t}\int_{0}^{t} e^{2\alpha s}\norm{\Delta_h u_h(s)}^2 ds
  & +2e^{-2\alpha t}\int_{0}^{t}e^{2\alpha s}\norm{\nabla u_h(s)}^2\norm{\Delta_hu_h(s)}^2ds\\
  &\leq e^{-2\alpha t}\norm{\nabla u_{0h}}^2+\norm{f}^2(1-e^{-2\alpha t})=\hat K_{4}(t)\\
  &\leq\norm{\nabla u_0}^2+\frac{1}{2\alpha}\norm{f}^2_{L^\infty(L^2)}=K_{1}.
 \end{align*}
\end{lemma}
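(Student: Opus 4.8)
The plan is to mimic the proof of Lemma \ref{2.2} at the discrete level, using the discrete Laplacian $\Delta_h$ in place of $\Delta$. First I would test the semidiscrete equation \eqref{eq4} with $\chi = -e^{2\alpha t}\Delta_h u_h$. By the definition \eqref{temp6} of $\Delta_h$, the term $(u_{ht},\chi)$ becomes $-e^{2\alpha t}(u_{ht},\Delta_h u_h) = e^{2\alpha t}(\nabla u_{ht},\nabla u_h) = \tfrac12 e^{2\alpha t}\frac{d}{dt}\norm{\nabla u_h}^2$, and the Kirchhoff term $(1+\norm{\nabla u_h}^2)(\nabla u_h,\nabla\chi)$ becomes $(1+\norm{\nabla u_h}^2)e^{2\alpha t}\norm{\Delta_h u_h}^2$, again by \eqref{temp6} (with $v_h = w_h = u_h$ on one side and the defining identity applied to $\nabla u_h \cdot \nabla(\Delta_h u_h)$). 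The right-hand side is $(f,-e^{2\alpha t}\Delta_h u_h)$, which I bound by Cauchy--Schwarz and Young: $e^{2\alpha t}\norm{f}\,\norm{\Delta_h u_h} \le \tfrac12 e^{2\alpha t}\norm{f}^2 + \tfrac12 e^{2\alpha t}\norm{\Delta_h u_h}^2$.

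Next I would rewrite the left-hand side using $\tfrac12\frac{d}{dt}(e^{2\alpha t}\norm{\nabla u_h}^2) = e^{2\alpha t}(\tfrac12\frac{d}{dt}\norm{\nabla u_h}^2 + \alpha\norm{\nabla u_h}^2)$, so after multiplying through appropriately the inequality reads
\begin{equation*}
\tfrac12\frac{d}{dt}(e^{2\alpha t}\norm{\nabla u_h}^2) - \alpha e^{2\alpha t}\norm{\nabla u_h}^2 + (1+\norm{\nabla u_h}^2)e^{2\alpha t}\norm{\Delta_h u_h}^2 \le \tfrac12 e^{2\alpha t}\norm{f}^2 + \tfrac12 e^{2\alpha t}\norm{\Delta_h u_h}^2.
\end{equation*}
To control the $\alpha e^{2\alpha t}\norm{\nabla u_h}^2$ term I would use a discrete Poincar\'e inequality $\norm{\nabla u_h}^2 \le \tfrac{1}{\lambda_1}\norm{\Delta_h u_h}^2$ — this follows for $V_h \subset H^1_0$ since $\norm{\nabla u_h}^2 = (-\Delta_h u_h, u_h) \le \norm{\Delta_h u_h}\norm{u_h} \le \tfrac{1}{\sqrt{\lambda_1}}\norm{\Delta_h u_h}\norm{\nabla u_h}$, giving $\norm{\nabla u_h} \le \tfrac{1}{\sqrt{\lambda_1}}\norm{\Delta_h u_h}$. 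Absorbing both the $\tfrac12\norm{\Delta_h u_h}^2$ from the right and the $\tfrac{\alpha}{\lambda_1}\norm{\Delta_h u_h}^2$ into the left leaves a coefficient $\beta = 1 - \tfrac{2\alpha}{\lambda_1} > 0$ in front of $e^{2\alpha t}\norm{\Delta_h u_h}^2$, plus the full quartic term $2e^{2\alpha t}\norm{\nabla u_h}^2\norm{\Delta_h u_h}^2$. Then I integrate in time from $0$ to $t$, multiply by $e^{-2\alpha t}$, and bound $\tfrac12\int_0^t e^{2\alpha s}\norm{f}^2\,ds \le \tfrac{1}{4\alpha}(e^{2\alpha t}-1)\norm{f}^2_{L^\infty(L^2)}$, which after multiplying by $e^{-2\alpha t}$ and the leftover factor gives the stated $\norm{f}^2(1-e^{-2\alpha t})$ bound; the final $\le K_1$ line is immediate since $e^{-2\alpha t}\norm{\nabla u_{0h}}^2 \le \norm{\nabla u_0}^2$ (as $\norm{\nabla u_{0h}}$ is controlled by $\norm{\nabla u_0}$) and $1-e^{-2\alpha t}\le 1$.

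The only genuine subtlety — the point I would be most careful about — is justifying the discrete Poincar\'e inequality with the \emph{same} constant $\lambda_1$ as the continuous eigenvalue, and making sure the identity $(\nabla u_h, \nabla(-\Delta_h u_h)) = \norm{\Delta_h u_h}^2$ is applied correctly (it is just \eqref{temp6} with $v_h = u_h$, $w_h = -\Delta_h u_h$, since $-\Delta_h u_h \in V_h$). Since $V_h \subset H^1_0(\Omega)$, the continuous Poincar\'e inequality $\norm{v_h} \le \tfrac{1}{\sqrt{\lambda_1}}\norm{\nabla v_h}$ holds verbatim for $v_h \in V_h$, and that is all that is needed for the argument above, so no new constant is introduced. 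Everything else is a routine transcription of the Lemma \ref{2.2} computation, with $\Delta$ replaced by $\Delta_h$ and the regularity of $u$ replaced by the built-in smoothness of $u_h \in V_h$, so no compactness or elliptic-regularity input is required here. This completes the plan.
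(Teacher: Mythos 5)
Your proposal is correct and is exactly the argument the paper intends: the paper's own proof of this lemma is simply ``similar to the proof of Lemma \ref{2.2},'' and your transcription with $\Delta_h$ in place of $\Delta$, together with the discrete inequality $\norm{\nabla v_h}\le \lambda_1^{-1/2}\norm{\Delta_h v_h}$ derived from \eqref{temp6} and the continuous Poincar\'e inequality on $V_h\subset H^1_0$, is the right way to carry it out. The only cosmetic discrepancy is that your computation correctly yields $\tfrac{1}{2\alpha}\norm{f}^2_{L^\infty(L^2)}(1-e^{-2\alpha t})$ for the forcing term (consistent with the final $K_1$ line), whereas the displayed middle line of the lemma drops the factor $\tfrac{1}{2\alpha}$ — a typo in the statement, not a gap in your proof.
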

\begin{proof}
 Proof is similar to the proof of  Lemma~\ref{2.2}.
\end{proof}

\begin{lemma}\label{lemma3.2}
Let $u_{0h}$ is an approximation of $u_0$, $u_0\in H^1_0(\Omega)$ and $f\in L^\infty(L^2)$. Then there holds:
  \begin{align*}
   2e^{-2\alpha t}\int_{0}^{t}&e^{2\alpha s}\norm{u_{hs}}^2ds+(2+\norm{\nabla u_h(t)}^2)\norm{\nabla u_h(t)}^2\\
  &\leq(2+\norm{\nabla u_{h0}}^2)\norm{\nabla u_{0h}}^2e^{-2\alpha t}+\frac{1}{\alpha}\norm{f}^2_{L^\infty(L^2)}(1-e^{-2\alpha t})
  +\alpha(1+\dfrac{4}{\beta})\widehat K_0(t)=\widehat K_5(t)\\
  &\leq(2+\norm{\nabla u_{0h}}^2)\norm{\nabla u_{0h}}^2+\frac{1}{\alpha}\norm{f}^2_{L^\infty(L^2)}+\alpha(1+\dfrac{4}{\beta})K_0=K_5.
  \end{align*}
 \end{lemma}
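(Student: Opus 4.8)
The plan is to mimic the proof of Lemma~\ref{2.3} exactly, replacing $u$ by $u_h$, the Laplacian by the discrete Laplacian $\Delta_h$, and the test space $H^1_0$ by $V_h$. First I would choose $\chi=u_{ht}$ in the semidiscrete weak formulation \eqref{eq4}; since $(\nabla u_h,\nabla u_{ht})=\tfrac12\frac{d}{dt}\norm{\nabla u_h}^2$, the nonlinear term becomes $(1+\norm{\nabla u_h}^2)\cdot\tfrac12\frac{d}{dt}\norm{\nabla u_h}^2=\tfrac14\frac{d}{dt}\big((2+\norm{\nabla u_h}^2)\norm{\nabla u_h}^2\big)$. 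Bounding the right-hand side by $(f,u_{ht})\le\tfrac12\norm{f}^2+\tfrac12\norm{u_{ht}}^2$ and absorbing $\tfrac12\norm{u_{ht}}^2$ yields
\begin{equation*}
\norm{u_{ht}}^2+\tfrac12\frac{d}{dt}\big((2+\norm{\nabla u_h}^2)\norm{\nabla u_h}^2\big)\le\norm{f}^2,
\end{equation*}
exactly the discrete analogue of the first displayed inequality in the proof of Lemma~\ref{2.3}. Note this step uses only that $u_{ht}\in V_h$ is an admissible test function, so no new regularity is needed.

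Next I would multiply by $2e^{2\alpha t}$ and rewrite the time derivative term as $\frac{d}{dt}\big(e^{2\alpha t}(2+\norm{\nabla u_h}^2)\norm{\nabla u_h}^2\big)$ minus the correction $2\alpha e^{2\alpha t}(2+\norm{\nabla u_h}^2)\norm{\nabla u_h}^2$, then integrate from $0$ to $t$ and multiply through by $e^{-2\alpha t}$. This gives
\begin{align*}
2e^{-2\alpha t}\int_0^t e^{2\alpha s}\norm{u_{hs}}^2\,ds&+(2+\norm{\nabla u_h(t)}^2)\norm{\nabla u_h(t)}^2\\
&\le(2+\norm{\nabla u_{0h}}^2)\norm{\nabla u_{0h}}^2e^{-2\alpha t}+\tfrac1\alpha\norm{f}^2_{L^\infty(L^2)}(1-e^{-2\alpha t})\\
&\quad+2\alpha e^{-2\alpha t}\int_0^t e^{2\alpha s}(2+\norm{\nabla u_h}^2)\norm{\nabla u_h}^2\,ds.
\end{align*}
The last term is then controlled by splitting $2+\norm{\nabla u_h}^2$ and using the $L^2$-level \emph{a priori} bound in the discrete case: by the estimate \eqref{eq3.2} (the discrete analogue of Lemma~\ref{2.1}), $e^{-2\alpha t}\int_0^t e^{2\alpha s}\norm{\nabla u_h}^2\,ds$ and $e^{-2\alpha t}\int_0^t e^{2\alpha s}\norm{\nabla u_h}^4\,ds$ are both bounded by $\widehat K_0(t)$ up to the factor $1/\min(\beta,2)$, so the whole correction term is $\le\alpha(1+\tfrac4\beta)\widehat K_0(t)$ (with $\widehat K_0$ evaluated with the appropriate constant, matching the statement). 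Collecting terms yields $\widehat K_5(t)$, and replacing every $\widehat K(t)$ by its time-uniform bound $K$ and $e^{-2\alpha t}\le1$, $1-e^{-2\alpha t}\le1$ gives $K_5$.

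The only genuine point requiring care — the ``main obstacle,'' though it is mild — is that the $L^2$ \emph{a priori} bound one feeds into the correction term must be the \emph{discrete} one from \eqref{eq3.2}, whose right-hand side involves $\min(\beta,2)$ rather than the clean $\beta$ of Lemma~\ref{2.1}; one should check that this still produces a bound of the advertised form $\alpha(1+\tfrac4\beta)\widehat K_0(t)$, which it does after absorbing the $\min(\beta,2)$ into the constant, and that $\norm{\nabla u_{0h}}$ is controlled by $\norm{\nabla u_0}$ via the chosen approximation $u_{0h}$ of $u_0$ (e.g.\ the elliptic or $H^1_0$-projection), so that the final bound $K_5$ depends only on the data. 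Everything else is an algebraic transcription of the continuous proof, so I would simply write: ``The proof proceeds exactly as in Lemma~\ref{2.3} with $u$, $\Delta$, and the test function $u_t$ replaced by $u_h$, $\Delta_h$, and $u_{ht}\in V_h$, using \eqref{eq3.2} in place of Lemma~\ref{2.1}.''
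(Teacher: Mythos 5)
Your proposal is correct and is exactly the route the paper takes: the paper's proof of this lemma consists of the single line ``Proof is similar to the proof of Lemma~\ref{2.3},'' i.e.\ test \eqref{eq4} with $\chi=u_{ht}\in V_h$, absorb, multiply by $2e^{2\alpha t}$, integrate, and control the correction term via the discrete $L^2$-level bound. Your extra remark about reconciling the $\min(\beta,2)$ constant in \eqref{eq3.2} with the advertised $\alpha(1+\tfrac{4}{\beta})\widehat K_0(t)$ is a legitimate and correctly resolved detail that the paper glosses over.
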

\begin{proof}
 Proof is similar to the proof of the  Lemma \ref{2.3}.
\end{proof}
\begin{lemma}\label{lemma3.3}
%    Let $u_0\in H^1_0(\Omega)$ , $f\in L^\infty(L^2)$, and $f_t\in L^2(H^{-1})$. Then, there holds
Let $u_{0h}$ is an approximation of $u_0$, $f\in L^\infty(L^2)$ and $f_t\in L^2(H^{-1})$. Then there holds:
 \begin{align*}
 &\tau(t)\norm{u_{ht}(t)}^2+e^{-2\alpha t}\int_{0}^{t}\tau(s)e^{2\alpha s}(\beta+2\norm{\nabla u_h}^2)\norm{\nabla u_{hs}}^2ds+4e^{-2\alpha t}\int_{0}^{t}\tau(s)e^{2\alpha s}(\nabla u_{hs},\nabla u_h)^2ds\\
   &\leq e^{-2\alpha t}\int_{0}^{t}e^{2\alpha s}\norm{f_s}^2_{-1}ds+K_5,
 \end{align*}
 where $\tau(t)=min(t,1).$
 \end{lemma}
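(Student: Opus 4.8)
The plan is to mimic the proof of Lemma~\ref{2.4}, but to insert the weight $\tau(t)=\min(t,1)$ so that no information on $\|u_{ht}(0)\|$ is needed — this is the discrete analogue of the standard ``$\tau$-weighted'' trick that converts initial data in $H^1_0$ into bounds on $u_{ht}$ away from $t=0$. First I would differentiate the semidiscrete equation \eqref{eq4} in time, obtaining
\[
 (u_{htt},\chi)+\bigl((1+\|\nabla u_h\|^2)\nabla u_{ht},\nabla\chi\bigr)+\bigl(2(\nabla u_{ht},\nabla u_h)\nabla u_h,\nabla\chi\bigr)=(f_t,\chi)\qquad\forall\chi\in V_h,
\]
and then set $\chi=u_{ht}$ to get the discrete version of \eqref{eq2.5}:
\[
 \tfrac12\tfrac{d}{dt}\|u_{ht}\|^2+(1+\|\nabla u_h\|^2)\|\nabla u_{ht}\|^2+2(\nabla u_{ht},\nabla u_h)^2=(f_t,u_{ht}).
\]

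Next I would multiply by $2\tau(t)e^{2\alpha t}$ and rewrite the left side, noting $\tfrac{d}{dt}\bigl(\tau e^{2\alpha t}\|u_{ht}\|^2\bigr)=\tau e^{2\alpha t}\tfrac{d}{dt}\|u_{ht}\|^2+(\tau'+2\alpha\tau)e^{2\alpha t}\|u_{ht}\|^2$, so that the extra term $(\tau'+2\alpha\tau)e^{2\alpha t}\|u_{ht}\|^2$ appears on the right. On the interval $(0,1)$ one has $\tau'=1$ and $\tau\le 1$, while for $t\ge 1$ one has $\tau'=0$; in either case $\tau'+2\alpha\tau\le 1+2\alpha$. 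The term $\tau'e^{2\alpha t}\|u_{ht}\|^2$ is controlled by integrating and using Lemma~\ref{lemma3.2}, which gives a uniform (in time) bound $e^{-2\alpha t}\int_0^t e^{2\alpha s}\|u_{hs}\|^2\,ds\le \tfrac12 K_5$; absorbing constants into $K_5$ accounts for the ``$+K_5$'' on the right of the statement. Handling $(f_t,u_{ht})\le\tfrac12\|f_t\|_{-1}^2+\tfrac12\|\nabla u_{ht}\|^2$ via Poincar\'e produces the $\beta=1-\tfrac{2\alpha}{\lambda_1}$ coefficient in front of $\|\nabla u_{hs}\|^2$, exactly as in Lemma~\ref{2.4}. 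Then integrate from $0$ to $t$; the boundary term at $t=0$ vanishes because $\tau(0)=0$, and multiplying through by $e^{-2\alpha t}$ yields the stated inequality.

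The main obstacle, such as it is, is bookkeeping rather than anything deep: one must check that all the constants generated by the $\tau'+2\alpha\tau$ term, by the use of Lemma~\ref{lemma3.2}, and by the choice of $\alpha<\lambda_1/2$ can be genuinely absorbed into the single constant $K_5$ uniformly in $t$, and that the discrete Laplacian $\Delta_h$ and the inverse inequality are \emph{not} needed here (they enter only in the higher-order estimates). A small point worth verifying is that the semidiscrete solution is smooth enough in time for the differentiation in $t$ to be legitimate — this follows from Theorem~3.1, which gives $u_h\in C^1([0,\infty];V_h)$, together with the fact that the right-hand side of the ODE system is smooth in $u_h$ and the assumption $f_t\in L^2(H^{-1})$; in a finite-dimensional space this upgrades to $u_h\in C^2$ in $t$ in the a.e./weak sense required to justify the energy identity.
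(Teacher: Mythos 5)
Your proposal follows the paper's own proof essentially verbatim: differentiate \eqref{eq4} in time, test with $u_{ht}$, multiply by $2\tau(t)e^{2\alpha t}$, absorb the $\tau'$-contribution $e^{2\alpha t}\norm{u_{ht}}^2$ after integration via Lemma~\ref{lemma3.2} and the $2\alpha\tau$-contribution via Poincar\'e (together with Young on $(f_t,u_{ht})$ giving the $\beta+2\norm{\nabla u_h}^2$ coefficient), then integrate and multiply by $e^{-2\alpha t}$, with $\tau(0)=0$ killing the initial term. The only looseness is the exact constant multiplying the Lemma~\ref{lemma3.2} bound, which you flag and which the paper itself treats just as casually in writing ``$+K_5$''.
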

 \begin{proof}
  Differentiating of \eqref{eq4} with respect to time yields
  \begin{align}
   (u_{htt},v)+\left((1+\norm{\nabla u_h}^2)\nabla u_{ht},\nabla v\right)+\left(2(\nabla u_{ht},\nabla u_h)\nabla u_h,\nabla v\right)=(f_t,v).\label{eq10}
  \end{align}
 Substitute $v=u_{ht}$ in \eqref{eq10} to obtain
 \begin{align}
  \frac{1}{2}\frac{d}{dt}\norm{u_{ht}}^2+(1+\norm{\nabla u_h}^2)\norm{\nabla u_{ht}}^2+2(\nabla u_{ht},\nabla u_h)^2=(f_t,u_{ht}).\label{eq11}
 \end{align}
 Set $\tau(t)=min(t,1)$ and 
 multiplying \eqref{eq11} by $2\tau e^{2\alpha t}$,$\alpha>0$ and using Poincare's inequality it follows that
 \begin{align*}
  \frac{d}{dt}(\tau e^{2\alpha t}\norm{u_{ht}}^2)+\tau(t)e^{2\alpha t}\left(1-\frac{2\alpha}{\lambda_1}\right)&\norm{\nabla u_{ht}}^2+\\
  &2\tau(t)e^{2\alpha t}(1+\norm{\nabla u_h}^2)\norm{\nabla u_{ht}}^2+4\tau(t)e^{2\alpha t}(\nabla u_{ht},\nabla u_h)^2\\
  &\leq e^{2\alpha t}\norm{f_t}^2_{-1}+e^{2\alpha t}\norm{u_{ht}}^2.
 \end{align*}
 Integrating with respect to time from $0$ to $t$ and multiplying the resulting inequality by $e^{-2\alpha t}$ to obtain
 \begin{align}
  &\tau(t)\norm{u_{ht}(t)}^2+e^{-2\alpha t}\int_{0}^{t}\tau(s)e^{2\alpha s}(\beta+2\norm{\nabla u_h}^2)\norm{\nabla u_{hs}}^2ds+4e^{-2\alpha t}\int_{0}^{t}\tau(s)e^{2\alpha s}(\nabla u_{hs},\nabla u_h)^2ds\notag\\
   &\leq e^{-2\alpha t}\int_{0}^{t}e^{2\alpha s}\norm{f_s}^2_{-1}ds+e^{-2\alpha t}\int_{0}^{t}e^{2\alpha s}\norm{u_{hs}}^2\;ds\label{eq12}.
%   &\leq e^{-2\alpha t}\left((1+\norm{\nabla u_0}^2)\norm{u_0}_{H^2}+\norm{f_0}\right)+e^{-2\alpha t}\int_{0}^{t}e^{2\alpha s}\norm{f_s}^2_{-1}ds.\notag
 \end{align}
 Now using Lemma \ref{lemma3.2}  the second term in the right hand side is bounded and hence, this completes the rest of the proof.
 \end{proof}
 \begin{lemma}\label{lemma3.4}
  Let $u_{0h}$ is an approximation of $u_0$, $u_0\in H^1_0(\Omega)$, $f\in L^\infty(L^2)$, and $f_t\in L^2(H^{-1})$. Then, there holds:
  \begin{align*}
   \tau(t)(1+\norm{\nabla u_h(t)}^2)\norm{\Delta u_h(t)}^2&\leq 2\tau\norm{f(t)}^2_{L^\infty(L^2)}+2\tau\norm{u_{ht}(t)}^2_{L^\infty(L^2)}\\
   &\leq 2\norm{f(t)}^2_{L^\infty(L^2)}+2e^{-2\alpha t}\int_{0}^{t}e^{2\alpha s}\norm{f_s}^2_{-1}ds+K_5.
  \end{align*}
 \end{lemma}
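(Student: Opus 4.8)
The plan is to mimic the proof of Lemma \ref{2.5} in the semidiscrete setting, inserting the cutoff weight $\tau(t)=\min(t,1)$ so that the bound on $\norm{u_{ht}}$ coming from Lemma \ref{lemma3.3} can be used. First I would test the semidiscrete equation \eqref{eq4} with $\chi=-\Delta_h u_h$. Using the definition \eqref{temp6} of the discrete Laplacian, $(\nabla u_h,\nabla(-\Delta_h u_h))=\norm{\Delta_h u_h}^2$, so this yields
\begin{equation*}
(1+\norm{\nabla u_h}^2)\norm{\Delta_h u_h}^2=-(f,\Delta_h u_h)+(u_{ht},-\Delta_h u_h).
\end{equation*}
Here I should be a little careful: in the lemma statement the norm on the left is written $\norm{\Delta u_h}$, whereas the natural quantity produced by testing with $-\Delta_h u_h$ in $V_h$ is $\norm{\Delta_h u_h}$; I would simply state and prove the estimate with $\Delta_h u_h$ throughout (this is the genuinely available discrete object), and note that $\norm{\Delta_h u_h}\le \norm{\Delta u_h}$ is not needed — the discrete Laplacian is the right operator for the a priori bound.

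Next I would bound the two right-hand side terms by Cauchy--Schwarz followed by Young's inequality, absorbing a factor $\tfrac12\norm{\Delta_h u_h}^2$ into the left side (this is legitimate since $(1+\norm{\nabla u_h}^2)\ge 1$, exactly as in Lemma \ref{2.5}):
\begin{equation*}
(1+\norm{\nabla u_h}^2)\norm{\Delta_h u_h}^2\le 2\norm{f}^2_{L^\infty(L^2)}+2\norm{u_{ht}}^2_{L^\infty(L^2)}.
\end{equation*}
Multiplying through by $\tau(t)$ gives the first inequality of the lemma (with $u_{ht}$ appearing rather than its $L^\infty$-norm, since $\tau$ multiplies it pointwise in $t$). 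For the second inequality I would invoke Lemma \ref{lemma3.3}: the term $\tau(t)\norm{u_{ht}(t)}^2$ is exactly the leading quantity controlled there, and since the remaining two integrals on the left of Lemma \ref{lemma3.3} are nonnegative, one gets directly
\begin{equation*}
\tau(t)\norm{u_{ht}(t)}^2\le e^{-2\alpha t}\int_{0}^{t}e^{2\alpha s}\norm{f_s}^2_{-1}\,ds+K_5.
\end{equation*}
Substituting this bound and using $\tau\le 1$ on the forcing term completes the proof.

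The only mildly delicate point — and the one I expect to require the most care — is the bookkeeping with $\tau$: Lemma \ref{lemma3.3} controls $\tau(t)\norm{u_{ht}(t)}^2$, not $\norm{u_{ht}(t)}^2$ itself, so the cutoff must already be present when the $\norm{u_{ht}}^2$ term is generated, i.e.\ one multiplies the energy identity by $\tau$ before estimating. Everything else is a routine repetition of the arguments in Lemma \ref{2.5}, with $\Delta$ replaced by $\Delta_h$ and the regularity $f_t\in L^2(H^{-1})$ (rather than $L^\infty(H^{-1})$) sufficing because $f_t$ enters only through the time-integral $\int_0^t e^{2\alpha s}\norm{f_s}_{-1}^2\,ds$ inherited from Lemma \ref{lemma3.3}.
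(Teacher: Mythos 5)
Your proposal is correct and follows essentially the same route as the paper: test \eqref{eq4} with $\chi=-\Delta_h u_h$, use the definition \eqref{temp6} and Young's inequality to absorb $\tfrac12\norm{\Delta_h u_h}^2$ into the left side, multiply by $\tau(t)$, and then invoke Lemma \ref{lemma3.3} to control $\tau(t)\norm{u_{ht}(t)}^2$. Your side remark that the estimate really concerns $\norm{\Delta_h u_h}$ rather than $\norm{\Delta u_h}$ is also consistent with what the paper actually proves.
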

\begin{proof}
 Substitute $v=-\Delta_hu_h$ in the weak formulation \eqref{eq4} to obtain 
 \begin{equation*}
  (1+\norm{\nabla u_h}^2)\norm{\Delta_h u_h}^2=-(f,\Delta_hu_h)+(u_t,\Delta_hu_h),
 \end{equation*}
and using Young's inequality, and \eqref{temp6},we now bound
 \begin{align*}
  (1+\norm{\nabla u_h}^2)\norm{\Delta_hu_h}^2&\leq\norm{f}^2_{L^\infty(L^2)}+\norm{u_{ht}}^2_{L^\infty(L^2)}+\frac{1}{2}\norm{\Delta_hu_h}^2\\
  &\leq\norm{f}^2_{L^\infty(L^2)}+\norm{u_{ht}}^2_{L^\infty(L^2)}+\frac{1}{2}(1+\norm{\nabla u_h}^2)\norm{\Delta_hu_h}^2.                                        
 \end{align*}
 Therefore, multiplying by the resulting inequality by $\tau$ it follows that
 \begin{align}
  \tau(1+\norm{\nabla u_h}^2)\norm{\Delta_hu_h}^2\leq2\tau\norm{f}^2_{L^\infty(L^2)}+2\tau\norm{u_{ht}}^2_{L^\infty(L^2)}.
 \end{align}
 From Lemma \ref{lemma3.3} applying the bound of $\tau\norm{u_t}^2$, we obtain bound for $\norm{\Delta_hu_h}.$
 This completes the rest of the proof.
\end{proof}

\subsection{A priori Error estimates}
This subsection focuses on  error estimates of the semidiscrete Galerkin approximation.

Let $\widetilde u_h(t)\in V_h$ be the Ritz-projection of $u(t)\in H^1_0(\Omega)$ defined by
\begin{equation}\label{eq5}
 \left(\nabla(u-\widetilde u_h),\nabla \chi\right)=0\hspace{0.1cm} \forall\chi\in V_h.
\end{equation}
For each $t>0$, $\widetilde u_h(t)\in V_h$ is welldefined for a given $u(t)$. With $\eta =u-\widetilde u_h,$ the following error estimates hold:
\begin{align}\label{eq3.4}
\norm{\eta}_j\leq C h^{\min(2,m)-j}\;\norm{u}_m, \text{and} \norm{\eta _t}\leq C h^{\min(2,m)-j}\norm{u_t}_m,\;\;
j=0,1 \;\mbox{ and }\; m=1,2
\end{align}
For a proof, refer to Thomee \cite{thomee}.
 Now split
$u-u_h=(u-\widetilde u_h)-(u_h-\widetilde u_h):=\eta-\theta$\\
Since estimates of $\eta$ are known, it is enough to estimate $\theta$. Using \eqref{temp1}, \eqref{eq4} and 
\eqref{eq5}, we arrive at an equation in $\theta$ as 
\begin{align}
 (\theta_t,\chi)+\Big((1+\norm{\nabla u_h}^2)\nabla u_h-(1+\norm{\nabla\widetilde u_h}^2)&\nabla\widetilde u_h,\nabla\chi\Big)=(\eta _t,\chi)\notag\\
 &+\Big((1+\norm{\nabla u}^2)\nabla u-(1+\norm{\nabla\widetilde u_h}^2)
 \nabla\widetilde u_h,\nabla \chi\Big)\label{eq3.5}
\end{align}
 \begin{theorem}\label{3.1}
Let $u_0\in H^1_0(\Omega)$, $f\in L^\infty(L^2)$ and $f_t \in L^{2}(H^{-1}).$ Then there holds:
\begin{align*}
 &\norm{\theta(t)}^2+\beta e^{-2\alpha t}\int_{0}^{t}e^{2\alpha s}\norm{\nabla\theta(s)}^2ds\\
 &\leq C(K_1)h^2e^{-2\alpha t}\Big(\norm{\nabla u_0}^2+\int_{0}^{t}e^{2\alpha s}\norm{f_s}^2_{-1}ds+\left((1+\norm{\nabla u_0}^2)^2\norm{u_0}^2_{H^2}+\norm{f_0}^2\right)\Big),
%  &\tau\norm{\theta(t)}^2+\tau\beta e^{-2\alpha t}\int_{0}^{t}e^{2\alpha s}\norm{\nabla\theta(s)}^2ds\\
%  &\leq C(K_1,K_3)h^2e^{-2\alpha t}\left(\norm{\nabla u_0}^2+\int_{0}^{t}e^{2\alpha s}\norm{f_s}^2_{-1}ds\right)
\end{align*}
 where $K_1$ depends on $\norm{\nabla u_0}$ and $\norm{f}_{L^\infty(L^2)}$.
 \end{theorem}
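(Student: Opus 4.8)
The idea is to estimate the weighted quantity $e^{2\alpha t}\norm{\theta(t)}^2$ directly and to use the monotonicity of the nonlinear operator so that \emph{no} Gronwall inequality is invoked; this is what keeps the estimate uniform in $t$. First I would put $\chi=\theta$ in the error equation \eqref{eq3.5}. By Lemma~\ref{2.6} applied to the pair $(u_h,\widetilde u_h)$ the nonlinear term on the left is bounded below by $\norm{\nabla\theta}^2$. Multiplying by $2e^{2\alpha t}$, using $e^{2\alpha t}\tfrac{d}{dt}\norm{\theta}^2=\tfrac{d}{dt}(e^{2\alpha t}\norm{\theta}^2)-2\alpha e^{2\alpha t}\norm{\theta}^2$, and then Poincar\'e's inequality on $-2\alpha\norm{\theta}^2$, one reaches
\[
\frac{d}{dt}\big(e^{2\alpha t}\norm{\theta}^2\big)+\Big(2-\tfrac{2\alpha}{\lambda_1}\Big)e^{2\alpha t}\norm{\nabla\theta}^2\le 2e^{2\alpha t}\Big[(\eta_t,\theta)+\big((1+\norm{\nabla u}^2)\nabla u-(1+\norm{\nabla\widetilde u_h}^2)\nabla\widetilde u_h,\nabla\theta\big)\Big].
\]

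The right-hand side is handled through the Ritz orthogonality \eqref{eq5}. Since $\theta\in V_h$ we have $(\nabla\eta,\nabla\theta)=0$, and writing $(1+\norm{\nabla u}^2)\nabla u-(1+\norm{\nabla\widetilde u_h}^2)\nabla\widetilde u_h=(1+\norm{\nabla u}^2)\nabla\eta+(\norm{\nabla u}^2-\norm{\nabla\widetilde u_h}^2)\nabla\widetilde u_h$, the first group pairs to zero against $\nabla\theta$, so only $(\norm{\nabla u}^2-\norm{\nabla\widetilde u_h}^2)(\nabla\widetilde u_h,\nabla\theta)$ survives. Because $\widetilde u_h\in V_h$, \eqref{eq5} also gives the Pythagorean identity $\norm{\nabla u}^2=\norm{\nabla\widetilde u_h}^2+\norm{\nabla\eta}^2$, so this residual equals $\norm{\nabla\eta}^2(\nabla\widetilde u_h,\nabla\theta)$; using the $H^1_0$-stability $\norm{\nabla\widetilde u_h}\le\norm{\nabla u}$ of the Ritz projection together with the uniform bound $\norm{\nabla u}^2\le K_1$ of Lemma~\ref{2.2} (which also gives $\norm{\nabla\eta}^2=\norm{\nabla u}^2-\norm{\nabla\widetilde u_h}^2\le K_1$), and $(\eta_t,\theta)\le\lambda_1^{-1/2}\norm{\eta_t}\norm{\nabla\theta}$, Young's inequality bounds the whole right-hand side by $e^{2\alpha t}\big[C(K_1)\big(\norm{\eta_t}^2+\norm{\nabla\eta}^2\big)+\norm{\nabla\theta}^2\big]$. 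Moving the $\norm{\nabla\theta}^2$ term to the left, where its coefficient is $2-\tfrac{2\alpha}{\lambda_1}$, leaves exactly $\beta=1-\tfrac{2\alpha}{\lambda_1}$, so
\[
\frac{d}{dt}\big(e^{2\alpha t}\norm{\theta}^2\big)+\beta e^{2\alpha t}\norm{\nabla\theta}^2\le C(K_1)\,e^{2\alpha t}\big(\norm{\eta_t}^2+\norm{\nabla\eta}^2\big);
\]
crucially, no $\norm{\theta}^2$ appears on the right, so a plain integration — not a Gronwall step — now closes the estimate.

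Integrating from $0$ to $t$, multiplying by $e^{-2\alpha t}$, and taking $u_{0h}=\widetilde u_h(0)$ so that $\theta(0)=0$ (any reasonable approximation of $u_0$ would merely add a harmless $Ch^2e^{-2\alpha t}\norm{\nabla u_0}^2$), one gets
\[
\norm{\theta(t)}^2+\beta e^{-2\alpha t}\int_0^t e^{2\alpha s}\norm{\nabla\theta(s)}^2\,ds\le C(K_1)\,e^{-2\alpha t}\int_0^t e^{2\alpha s}\big(\norm{\eta_s}^2+\norm{\nabla\eta(s)}^2\big)\,ds.
\]
It then remains to insert the approximation estimates \eqref{eq3.4} — $\norm{\eta_s}\le Ch\norm{u_s}_1$ and $\norm{\nabla\eta}\le Ch\norm{u}_{H^2}$ — and the regularity of Section~2: Lemma~\ref{2.4}, after Poincar\'e (to pass from $\norm{u_s}_1$ to $\norm{\nabla u_s}$), controls the $\norm{\eta_s}^2$ integral and yields the terms $e^{-2\alpha t}\big((1+\norm{\nabla u_0}^2)\norm{u_0}_{H^2}+\norm{f_0}\big)^2$ and $e^{-2\alpha t}\int_0^t e^{2\alpha s}\norm{f_s}^2_{-1}ds$, while Lemma~\ref{2.2} (with $\norm{u}_{H^2}^2\le C\norm{\Delta u}^2$, together with the pointwise $H^2$-bound following Lemma~\ref{2.5}) controls the $\norm{\nabla\eta}^2$ integral and yields $e^{-2\alpha t}\norm{\nabla u_0}^2$. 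Collecting these gives a bound of the asserted form.

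The step I expect to be the main obstacle is the treatment of the nonlocal coefficient in the second paragraph. A careless estimate of $\big((1+\norm{\nabla u}^2)\nabla u-(1+\norm{\nabla\widetilde u_h}^2)\nabla\widetilde u_h,\nabla\theta\big)$ leaves either a $\norm{\nabla\theta}^2$ with a coefficient depending on $\norm{\nabla u}$ or $\norm{\nabla u_h}$ that is only \emph{locally} bounded, or a $\norm{\theta}^2$-term that forces a Gronwall step and so destroys uniformity in $t$. Circumventing this needs both the Ritz orthogonality \eqref{eq5} (to remove the genuinely linear part of the consistency error) and the global $H^1_0$-bound $\norm{\nabla u}^2\le K_1$ of Lemma~\ref{2.2} (to keep the coefficient of the surviving term bounded for all $t$); once these are secured, the remaining bookkeeping with the exponential weights is routine.
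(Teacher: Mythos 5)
Your proposal is correct and follows essentially the same route as the paper's proof: test with $\chi=\theta$, invoke the monotonicity Lemma~\ref{2.6}, use the Ritz orthogonality to reduce the consistency term to $(\norm{\nabla u}^2-\norm{\nabla\widetilde u_h}^2)(\nabla\widetilde u_h,\nabla\theta)$, absorb $\norm{\nabla\theta}^2$ to leave the coefficient $\beta$, and close with the weighted regularity bounds of Lemmas~\ref{2.2} and~\ref{2.4} without any Gronwall step. The only (harmless) cosmetic difference is that you use the exact Pythagorean identity $\norm{\nabla u}^2-\norm{\nabla\widetilde u_h}^2=\norm{\nabla\eta}^2$ where the paper uses the factorization $\norm{\nabla u}^2-\norm{\nabla\widetilde u_h}^2\le(\norm{\nabla u}+\norm{\nabla\widetilde u_h})\norm{\nabla\eta}$; both yield the same $C(K_1)h^2$ bound.
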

\begin{proof}
 Set $\chi=\theta$ in \eqref{eq3.5} and use also the monotonicity property to obtain 
\begin{align}
 \frac{1}{2}\frac{d}{dt}\norm{\theta (t)}^2+\norm{\nabla \theta}^2 &\leq\norm{\eta_t}\norm{\theta}+
 \left(\norm{\nabla u}^2-\norm{\nabla\widetilde u_h}^2\right)\norm{\nabla u}\norm{\nabla\theta}\notag\\
 &\leq\left(\frac{1}{\sqrt(\lambda_1)}\norm{\eta_t}+(\norm{\nabla u}+\norm{\nabla\widetilde u_h})\norm{\nabla \eta}\norm{\nabla u}\right)\norm{\nabla \theta}\label{eqn3.7}\\
 &\leq\frac{1}{\lambda_1}\norm{\eta_t}^2+\left(\norm{\nabla u}+\norm{\nabla\widetilde u_h}\right)^2
 \norm{\nabla \eta}^2\norm{\nabla u}^2+\frac{1}{2}\norm{\nabla \theta}^2.\notag
 \end{align} 
 Here, we have used Poincare inequality and Youngs inequality.
 Multiply by $2e^{2\alpha t}$, $\alpha >0$ and rewrite it as 
 \begin{align*}
  \frac{d}{dt}(e^{2\alpha t}\norm{\theta (t)}^2)-&2\alpha e^{2\alpha t}\norm{\theta}^2+e^{2\alpha t}\norm{\nabla \theta}^2\\
  &\leq\frac{1}{\lambda_1}e^{2\alpha t}\norm{\eta_t}^2+2\left(\norm{\nabla u}^2+\norm{\nabla\widetilde u_h}^2\right)\norm{\nabla u}^2(e^{2\alpha t}\norm{\nabla \eta}^2).  
 \end{align*}
Using Poincare's inequality $\norm{\theta}^2\leq\frac{1}{\lambda_1}\norm{\nabla \theta}^2$ with $\alpha>0$ such that
$\beta=(1-\frac{2\alpha}{\lambda_1})>0$ and integrating with respect to $t$ from $0$ to $t$ to obtain
\begin{align}
 \norm{\theta(t)}^2+\beta e^{-2\alpha t}\int_{0}^{t}e^{2\alpha s}\norm{\nabla \theta(s)}^2ds\leq e^{-2\alpha t}\norm{\theta(0)}^2+
 \frac{1}{\lambda_1}e^{-2\alpha t}\int_{0}^{t}e^{2\alpha s}\norm{\eta_s}^2ds\notag\\+2e^{-2\alpha t}\int_{0}^{t}\left(\norm{\nabla u}^2+
 \norm{\nabla\widetilde u_h}^2\right)\norm{\nabla u}^2e^{2\alpha s}\norm{\nabla \eta(s)}^2ds.\label{3.8}
\end{align}
 With a choice $u_{h0}=\widetilde u_h(0)$,$\theta(0)=0$. But with $u_{h0}=P_hu_0 \hspace{0.1cm} \text{or } u_{h0}=I_hu_0$, where $P_h$ and $I_h$, respectively, are
 $L^2$- projection and interpolant onto $V_h$, then
 $$\norm{\theta(0)}\leq\norm{u_{h0}-u_0}+\norm{u_0-\widetilde u_h(0)}\leq Ch\norm{\nabla u_0}.$$
 Then using regularity result in Lemma \ref{2.2} we arrive at 
 \begin{align*}
 \norm{\theta(t)}^2+&\beta e^{-2\alpha t}\int_{0}^{t}e^{2\alpha s}\norm{\nabla \theta(s)}^2ds\\ &\leq C(K_1)h^2e^{-2\alpha t}
 \left(\norm{\nabla u_0}^2+\int_{0}^{t}e^{2\alpha s}\norm{\nabla u_s}^2+\int_{0}^{t}e^{2\alpha s}\norm{\Delta u(s)}^2\right).
 \end{align*}
An application of Lemma \ref{2.2}, \ref{2.4} yields the final result. This completes the rest of the proof.
\end{proof}

Since, the estimate $\norm{\nabla \eta} \leq Ch\norm{u}_2$ is known, it is enough to
prove the estimate of
$\nabla\theta$. 
\begin{theorem}\label{3.2}
Let $u_0\in H^2\cap H^1_0(\Omega)$, $f\in L^\infty(L^2)$ and $f_t\in L^{2}(H^{-1})$. Then, there holds:
 \begin{align*}
 e^{-2\alpha t}\int_{0}^{t}\tau(s)e^{2\alpha s}\norm{\theta_s}^2\;ds&+\tau(t)(1+\norm{\nabla u_h}^2)\norm{\nabla\theta(t)}^2\\
 &\leq C(K_1,K_5)h^2e^{-2\alpha t}\Big((1+\norm{\nabla u_0)}^2)^2\norm{u_0}^2_{H^2}+\norm{f_0}^2+\int_{0}^{t}e^{2\alpha s}\norm{f_s}^2_{-1} ds\Big).
 \end{align*}
\end{theorem}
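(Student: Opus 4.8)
\medskip
\noindent\emph{Sketch of proof.}
The plan is to test the error equation \eqref{eq3.5} with $\chi=\theta_t$ in order to produce a differential inequality for $(1+\norm{\nabla u_h}^2)\norm{\nabla\theta}^2$, and only afterwards insert the weights $\tau(t)$ and $e^{2\alpha t}$. Abbreviating $a(v)=1+\norm{\nabla v}^2$ and using $\nabla u_h=\nabla\widetilde u_h+\nabla\theta$, $\nabla u=\nabla\widetilde u_h+\nabla\eta$ together with the Ritz orthogonality $(\nabla\eta,\nabla\chi)=0$ for $\chi\in V_h$ (applicable to $\chi=\theta_t\in V_h$), the left nonlinear bracket of \eqref{eq3.5} becomes $(a(u_h)-a(\widetilde u_h))(\nabla\widetilde u_h,\nabla\theta_t)+a(u_h)(\nabla\theta,\nabla\theta_t)$ and the right one collapses to $(a(u)-a(\widetilde u_h))(\nabla\widetilde u_h,\nabla\theta_t)$. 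Since $(a(u)-a(\widetilde u_h))-(a(u_h)-a(\widetilde u_h))=\norm{\nabla u}^2-\norm{\nabla u_h}^2$ and $a(u_h)(\nabla\theta,\nabla\theta_t)=\tfrac12\tfrac{d}{dt}\big[a(u_h)\norm{\nabla\theta}^2\big]-(\nabla u_h,\nabla u_{ht})\norm{\nabla\theta}^2$, this leads to
\[
\norm{\theta_t}^2+\frac12\frac{d}{dt}\Big[(1+\norm{\nabla u_h}^2)\norm{\nabla\theta}^2\Big]=(\nabla u_h,\nabla u_{ht})\norm{\nabla\theta}^2+(\eta_t,\theta_t)+\big(\norm{\nabla u}^2-\norm{\nabla u_h}^2\big)(\nabla\widetilde u_h,\nabla\theta_t);
\]
note that the sign convention $\theta=u_h-\widetilde u_h$ is what gives the $\tfrac{d}{dt}$-term its plus sign.

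Next I would bound the three terms on the right. Testing \eqref{eq4} with $u_{ht}$ (the discrete analogue of the computation in Lemma~\ref{2.3}) yields $\norm{u_{ht}}^2+(1+\norm{\nabla u_h}^2)\tfrac{d}{dt}\norm{\nabla u_h}^2\le\norm{f}^2$, hence $\tfrac{d}{dt}\norm{\nabla u_h}^2\le\norm{f}^2_{L^\infty(L^2)}$ and therefore $(\nabla u_h,\nabla u_{ht})\norm{\nabla\theta}^2\le\tfrac12\norm{f}^2_{L^\infty(L^2)}\norm{\nabla\theta}^2$ (the left side being either nonpositive or bounded by this); this one-sided bound is what keeps the whole argument Gronwall-free. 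For the last term, $(\nabla\widetilde u_h,\nabla\theta_t)=(\nabla u,\nabla\theta_t)=-(\Delta u,\theta_t)$ by the Ritz property and Green's formula, legitimate since $u(t)\in H^2\cap H^1_0$ with $\norm{\Delta u(t)}$ controlled by Lemma~\ref{2.5}; together with $|\norm{\nabla u}^2-\norm{\nabla u_h}^2|\le(\norm{\nabla u}+\norm{\nabla u_h})\norm{\nabla(\eta-\theta)}\le C(K_1)(\norm{\nabla\eta}+\norm{\nabla\theta})$ (Lemmas~\ref{2.2}, \ref{lemma3.1}) this term is dominated by $\tfrac14\norm{\theta_t}^2+C(K_1,K_5)(\norm{\nabla\eta}^2+\norm{\nabla\theta}^2)$, the $K_5$ entering via the discrete a priori bounds of Lemmas~\ref{lemma3.2}--\ref{lemma3.3}. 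Finally $(\eta_t,\theta_t)\le\tfrac14\norm{\theta_t}^2+\norm{\eta_t}^2$. Absorbing the $\norm{\theta_t}^2$-contributions,
\[
\frac12\norm{\theta_t}^2+\frac12\frac{d}{dt}\Big[(1+\norm{\nabla u_h}^2)\norm{\nabla\theta}^2\Big]\le C(K_1,K_5)\big(\norm{\nabla\theta}^2+\norm{\nabla\eta}^2\big)+\norm{\eta_t}^2.
\]

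I would then multiply by $2\tau(t)e^{2\alpha t}$, $0<\alpha<\lambda_1/2$, and write $\tau e^{2\alpha t}\tfrac{d}{dt}[\cdot]=\tfrac{d}{dt}\big[\tau e^{2\alpha t}(\cdot)\big]-(\tau'+2\alpha\tau)e^{2\alpha t}(\cdot)$; since $0\le\tau'\le1$, $0\le\tau\le1$ and $1+\norm{\nabla u_h}^2\le1+K_1$, the extra term is at most $Ce^{2\alpha t}\norm{\nabla\theta}^2$ and may be moved to the right. Integrating from $0$ to $t$, the boundary term at $s=0$ drops out because $\tau(0)=0$ — this is exactly the point of the weight $\tau$, since no bound on $\theta_t(0)$ (equivalently on $\Delta_h u_{0h}$) is then needed. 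After multiplying by $e^{-2\alpha t}$,
\[
e^{-2\alpha t}\!\int_0^t\!\tau(s)e^{2\alpha s}\norm{\theta_s}^2 ds+\tau(t)(1+\norm{\nabla u_h}^2)\norm{\nabla\theta(t)}^2\le C(K_1,K_5)e^{-2\alpha t}\!\int_0^t\! e^{2\alpha s}\big(\norm{\nabla\theta}^2+\norm{\nabla\eta}^2+\norm{\eta_s}^2\big) ds.
\]
On the right, $e^{-2\alpha t}\int_0^te^{2\alpha s}\norm{\nabla\theta(s)}^2ds$ is precisely the quantity bounded in Theorem~\ref{3.1}, while $\norm{\nabla\eta}\le Ch\norm{u}_2$ and $\norm{\eta_s}\le Ch\norm{u_s}_1$ by \eqref{eq3.4}, and the corresponding weighted integrals of $\norm{u}_2^2$ and $\norm{u_s}_1^2$ are controlled by Lemmas~\ref{2.2} and \ref{2.4}. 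Inserting these bounds produces the asserted estimate.

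The step I expect to be the real obstacle is the algebraic rearrangement in the first paragraph: one has to land on the pure $\tfrac{d}{dt}$-form with the correct sign and recognize that the apparently dangerous factor $(\nabla u_h,\nabla u_{ht})$ obeys the one-sided bound coming from the energy identity for $u_h$. Once that is in place, the right-hand side reduces to $\norm{\nabla\theta}$, handled by Theorem~\ref{3.1}, and to $\norm{\nabla\eta}$, $\norm{\eta_t}$, handled by \eqref{eq3.4} and the regularity Lemmas~\ref{2.2}, \ref{2.4}, \ref{2.5}.
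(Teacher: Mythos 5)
Your proposal is correct and follows essentially the same route as the paper: test the $\theta$-equation with $\theta_t$, use the Ritz orthogonality to rewrite the troublesome $\nabla\theta_t$-pairings as $-(\,\cdot\,\Delta u,\theta_t)$, assemble the total derivative of $(1+\norm{\nabla u_h}^2)\norm{\nabla\theta}^2$, weight by $\tau(t)e^{2\alpha t}$ so that the boundary term at $t=0$ vanishes, and close the estimate by feeding $e^{-2\alpha t}\int_0^t e^{2\alpha s}\norm{\nabla\theta}^2\,ds$ into Theorem~\ref{3.1} and the $\eta$-terms into \eqref{eq3.4} together with Lemmas~\ref{2.2}, \ref{2.4}, \ref{2.5}.

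The one place you genuinely diverge is the coefficient term $(\nabla u_h,\nabla u_{ht})\norm{\nabla\theta}^2=(u_{ht},-\Delta_h u_h)\norm{\nabla\theta}^2$. The paper keeps it two-sided, bounding $\tau^{1/2}\norm{u_{ht}}\cdot\tau^{1/2}\norm{\Delta_h u_h}$ by the $\tau$-weighted discrete a priori estimates of Lemmas~\ref{lemma3.3} and \ref{lemma3.4}; you instead invoke the discrete energy identity (test \eqref{eq4} with $u_{ht}$) to get the one-sided bound $(\nabla u_h,\nabla u_{ht})=\tfrac12\tfrac{d}{dt}\norm{\nabla u_h}^2\le C\norm{f}^2_{L^\infty(L^2)}$. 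Your version is cleaner — it replaces the coefficient by an explicit constant and dispenses with Lemmas~\ref{lemma3.3}--\ref{lemma3.4} for this term — and either way the resulting $C\norm{\nabla\theta}^2$ contribution is absorbed by Theorem~\ref{3.1}, so no Gronwall argument is needed. The remaining details (the one-sided inequality $(\nabla\widehat\theta,\nabla\bar\partial\ldots)$ aside, the algebraic identity in your first display, the use of $\tau(0)=0$, and the final appeal to the regularity lemmas) all check out.
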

\begin{proof}
 Setting $\chi=\theta_t$ in \eqref{eq3.5}, it follows that
 \begin{align}
  (\theta_t,\theta_t)+&\left((1+\norm{\nabla u_h}^2)\nabla u_h-(1+\norm{\nabla\widetilde u_h}^2)\nabla\widetilde u_h,\nabla\theta _t\right)=(\eta_t,\theta_t)\notag\\
  &\left((1+\norm{\nabla u}^2)\nabla u-(1+\norm{\nabla\widetilde u_h}^2)\nabla\widetilde u_h,\nabla\theta _t\right).\label{eq4.2}
 \end{align} 
Now multiplying by $e^{2\alpha t}$ , $\alpha>0$ and applying Ritz projection the equation \eqref{eq4.2} can be written as
\begin{align*}
     e^{2\alpha t} \norm{\theta _t}^2+ e^{2\alpha t}(\nabla \theta,\nabla \theta _t)+& e^{2\alpha t}\left(\norm{\nabla u_h}^2\nabla u_h-\norm{\nabla\widetilde u_h}^2\nabla\widetilde u_h),\nabla\theta_t\right)\\
     = e^{2\alpha t}(\eta_t,\theta _t)+& e^{2\alpha t}\left(\norm{\nabla u}^2\nabla u-\norm{\nabla\widetilde u_h}^2\nabla\widetilde u_h,\nabla\theta _t\right)
\end{align*}
 Now rewrite it as 
 \begin{align}
  e^{2\alpha t}\norm{\theta_t}^2+&\frac{1}{2}\frac{d}{dt}(e^{2\alpha t}\norm{\nabla \theta}^2)-\alpha e^{2\alpha t}\norm{\nabla \theta}^2+e^{2\alpha t}
  \left(\norm{\nabla u_h}^2\nabla u_h-\norm{\nabla\widetilde u_h}^2\nabla\widetilde u_h,\nabla \theta_t\right)\notag\\&=e^{2\alpha t}(\eta_t,\theta_t)+
  e^{2\alpha t}\left(\norm{\nabla u}^2\nabla u-\norm{\nabla\widetilde u_h}^2\nabla\widetilde u_h,\nabla \theta_t\right).\label{eq4.3}
 \end{align}
 A use of the Ritz projection shows
 \begin{align}
  e^{2\alpha t}\left(\norm{\nabla u}^2\nabla u-\norm{\nabla\widetilde u_h}^2\nabla\widetilde u_h,\nabla \theta_t\right)&=e^{2\alpha t}\left((\norm{\nabla u}^2-\norm{\nabla\widetilde u_h}^2)\nabla\widetilde u_h,\nabla\theta_t\right)\notag\\
  &=e^{2\alpha t}\left((\norm{\nabla u}^2-\norm{\nabla\widetilde u_h}^2)\nabla u,\nabla\theta_t\right)\notag\\
  &-e^{2\alpha t}\left((\norm{\nabla u}^2-\norm{\nabla\widetilde u_h})\nabla{(u-\widetilde u_h),\nabla\theta_t}\right)\notag\\
  &=-e^{2\alpha t}\left((\norm{\nabla u}^2-\norm{\nabla\widetilde u_h}^2)\Delta u,\theta_t\right)\notag\\
  &\leq e^{2\alpha t}(\norm{\nabla u}+\norm{\nabla\widetilde u_h})\norm{\nabla\eta}\norm{\Delta u}\norm{\theta_t}.\label{eq4.4}  
 \end{align}
 For the third term on the left-hand side of \eqref{eq4.3}, rewrite it as
 \begin{align}
 e^{2\alpha t}\left(\norm{\nabla u_h}^2\nabla u_h-\norm{\nabla\widetilde u_h}^2\nabla\widetilde u_h,\nabla \theta_t\right)&=\frac{1}{2}\frac{d}{dt}(\norm{\nabla\theta}^2)e^{2\alpha t}\norm{\nabla u_h}^2\notag\\
  &-e^{2\alpha t}\left((\norm{\nabla u_h}^2-\norm{\nabla\widetilde u_h}^2)\nabla(u-\widetilde u_h),\nabla\theta_t\right)\notag\\
  &+e^{2\alpha t}\left((\norm{\nabla u_h}^2-\norm{\nabla\widetilde u_h}^2)\nabla u,\nabla\theta_t\right)\notag\\
  &=\frac{1}{2}\frac{d}{dt}(\norm{\nabla\theta}^2)e^{2\alpha t}\norm{\nabla u_h}^2\notag\\
  &-e^{2\alpha t}\left((\norm{\nabla u_h}^2-\norm{\nabla\widetilde u_h}^2)\Delta u,\theta_t\right).\label{eq4.5}
 \end{align}
  Similarly,
 \begin{align}
  &\frac{1}{2}\frac{d}{dt}(e^{2\alpha t}\norm{\nabla\theta}^2)+\frac{1}{2}\frac{d}{dt}(\norm{\nabla\theta}^2)e^{2\alpha t}\norm{\nabla u_h}^2\notag\\
  &=\frac{1}{2}\frac{d}{dt}\left((1+\norm{\nabla u_h}^2)e^{2\alpha t}\norm{\nabla\theta}^2\right)-e^{2\alpha t}\norm{\nabla\theta}^2(u_{ht},-\Delta_h u_h)
  -\alpha e^{2\alpha t}\norm{\nabla u_h}^2\norm{\nabla\theta}^2.\label{eq4.6}
 \end{align}
 Substitute \eqref{eq4.4},\eqref{eq4.5},\eqref{eq4.6} in \eqref{eq4.3} to obtain 
 \begin{align}
   e^{2\alpha t}\norm{\theta_t}^2+\frac{1}{2}\frac{d}{dt}\left((1+\norm{\nabla u_h}^2)e^{2\alpha t}\norm{\nabla\theta}^2\right)&\leq e^{2\alpha t}\norm{\eta_t}\norm{\theta_t}\notag\\
   &+e^{2\alpha t}\left((\norm{\nabla u_h}^2-\norm{\nabla\widetilde u_h}^2)\Delta u,\theta_t\right)\notag\\
   &+e^{2\alpha t}(\norm{\nabla u}+\norm{\nabla\widetilde u_h})\norm{\nabla\eta}\norm{\Delta u}\norm{\theta_t}\label{eq4.7}\\
   &+e^{2\alpha t}\left(\alpha(1+\norm{\nabla u_h}^2)+(u_{ht},-\Delta_h u_h)\right)\norm{\nabla\theta}^2.\notag
   \end{align}
Now multiplying the above inequality by $\tau$ with Young's inequality yields
\begin{align}
 \tau(t)e^{2\alpha t}\norm{\theta_t}^2+\frac{d}{dt}&\left(\tau(t)(1+\norm{\nabla u_h}^2)e^{2\alpha t}\norm{\nabla\theta}^2\right)\\
 &\leq6\tau(t)e^{2\alpha t}\Big(\frac{1}{2}\norm{\eta_t}^2+(\norm{\nabla u_h}^2+\norm{\nabla\widetilde u_h}^2)\norm{\nabla\theta}^2\norm{\Delta u}^2\label{eq4.8}\\
   &+(\norm{\nabla u}^2+\norm{\nabla\widetilde u_h}^2)\norm{\nabla\eta}^2\norm{\Delta u}^2\Big)\notag\\
   &+2e^{2\alpha t}\left((\tau(t)\alpha+\frac{1}{2})(1+\norm{\nabla u_h}^2)+\tau(t)\norm{u_{ht}}\norm{\Delta_h u_h}\right)\norm{\nabla\theta}^2.\notag
\end{align}
 An integration of  \eqref{eq4.8} with respect to time from $0$ to $t$ shows using $\norm{\nabla\widetilde u_h}\leq\norm{\nabla u}$(from Ritz projection) and
 multiplying the resulting inequality by $e^{-2\alpha t}$  that
\begin{align}
 e^{-2\alpha t}\int_{0}^{t}\tau(s)e^{2\alpha s}\norm{\theta_s}^2 ds&+\tau(t)(1+\norm{\nabla u_h}^2)\norm{\nabla\theta(t)}^2\notag\\
  &\leq Ce^{-2\alpha t}\int_{0}^{t}e^{2\alpha s}\left(\norm{\eta_s}^2+\norm{\nabla u}^2)\norm{\Delta u}^2\norm{\nabla\eta}^2\right)ds\label{eq4.9}\\
  &+Ce^{-2\alpha t}\int_{0}^{t}e^{2\alpha s}\Big((\norm{\nabla u_h}^2+\norm{\nabla u}^2)\norm{\Delta u}^2\notag\\
  &+(1+\alpha)(1+\norm{\nabla u_h}^2)+({\tau}^\frac{1}{2}\norm{u_{hs}})({\tau}^\frac{1}{2}\norm{\Delta_h u_h})\Big)\norm{\nabla\theta(s)}^2\;ds.\notag
\end{align}
 Consequently,
\begin{align}
e^{-2\alpha t}\int_{0}^{t}\tau(s)e^{2\alpha s}\norm{\theta_s}^2ds&+\tau(t)(1+\norm{\nabla u_h}^2)\norm{\nabla\theta(t)}^2\notag\\
&\leq Ch^2e^{-2\alpha t}\int_{0}^{t}e^{2\alpha s}\left(\norm{\nabla u_s}^2+\norm{\nabla u}^2)\norm{\Delta u}^2(\norm{\Delta u}^2)\right)ds\notag\\ 
&+Ce^{-2\alpha t}\int_{0}^{t}e^{2\alpha s}\Big((\norm{\nabla u_h}^2+\norm{\nabla u}^2)\norm{\Delta u}^2\label{eq4.10}\\
&+(1+\alpha)(1+\norm{\nabla u_h}^2)+({\tau}^\frac{1}{2}\norm{u_{hs}})({\tau}^\frac{1}{2}\norm{\Delta_h u_h})\Big)\norm{\nabla\theta(s)}^2\;ds.\notag
\end{align}
From Lemmas \ref{2.2}, \ref{2.4}-\ref{2.5}, the first term on right hand side is bounded by $$C(K_1)h^2e^{-2\alpha t}\left((1+\norm{\nabla u_0)}^2)^2\norm{u_0}^2_{H^2}+\norm{f_0}^2+\int_{0}^{t}e^{2\alpha s}\norm{f_s}^2_{-1}ds\right).$$ 
For bounding the second term on right hand side, apply  the previous theorem~\ref{3.1} to obtain a bound as
%is applied provided the term 
$$\left((\norm{\nabla u_h}^2+\norm{\nabla u}^2)\norm{\Delta u}^2+(1+\alpha)(1+\norm{\nabla u_h}^2)+({\tau}^\frac{1}{2}\norm{u_{hs}})({\tau}^\frac{1}{2}\norm{\Delta_h u_h})\right).$$ 
By applying Lemma  \ref{2.2}, \ref{2.5}, \ref{lemma3.1},  
 \ref{lemma3.3}, \ref{lemma3.4}, the above term is bounded. Consequently, the third term 
 in the right hand side is bounded by 
 $$C(K_1,K_5)h^2e^{-2\alpha t}\left((1+\norm{\nabla u_0)}^2)^2\norm{u_0}^2_{H^2}+\norm{f_0}^2+\int_{0}^{t}e^{2\alpha s}\norm{f_s}^2_{-1}ds\right).
 $$ 
Altogether, we arrive at
\begin{align}
 e^{-2\alpha t}\int_{0}^{t}\tau(s)e^{2\alpha s}\norm{\theta_s}^2\;ds&+\tau(t)(1+\norm{\nabla u_h}^2)
 \norm{\nabla\theta(t)}^2 \nonumber\\
 &\leq C(K_1,K_5)h^2e^{-2\alpha t}\Big((1+\norm{\nabla u_0}^2)^2\norm{u_0}^2_{H^2}+\norm{f_0}^2+\int_{0}^{t}e^{2\alpha s}\norm{f_s}^2_{-1} \;ds\Big).
\end{align}
This completes the rest of the proof.
\end{proof}

An application of  triangle inequality with the estimate of  $\norm{\nabla \eta}$ from \eqref{eq3.4} and the
estimate $\norm{\nabla \theta}$ from Theorem ~\ref{3.2} yields the following main result of this section.
\begin{theorem}\label{semi-discrete-main}
Let $u_0\in H^2\cap H^1_0(\Omega)$, $f\in L^\infty(L^2)$ and $f_t\in L^{2}(H^{-1})$. Then, there holds:
 \begin{align} 
 \norm{(u-u_h)(t)}^2+&\tau(t)\;\norm{\nabla (u-u_h)(t)}^2\notag\\
 &\leq C(K_1,K_5)h^2 \;e^{-2\alpha t}\Big((1+\norm{\nabla u_0}^2)^2\norm{u_0}^2_{H^2}
 +\norm{f_0}^2+\int_{0}^{t}e^{2\alpha s}\norm{f_s}^2_{-1} \;ds\Big)\label{eq:main-semidiscrete}.
 \end{align}
\end{theorem}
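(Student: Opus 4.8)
The plan is to derive \eqref{eq:main-semidiscrete} purely by assembling pieces already in place, using the splitting $u-u_h=\eta-\theta$ with $\eta=u-\widetilde u_h$ the Ritz-projection error and $\theta=u_h-\widetilde u_h$, exactly as set up before \eqref{eq3.5}. Applying the triangle inequality and $(a+b)^2\le 2a^2+2b^2$ gives $\norm{(u-u_h)(t)}^2\le 2\norm{\eta(t)}^2+2\norm{\theta(t)}^2$ and $\tau(t)\norm{\nabla(u-u_h)(t)}^2\le 2\tau(t)\norm{\nabla\eta(t)}^2+2\tau(t)\norm{\nabla\theta(t)}^2$. The two $\theta$-terms are precisely what Theorems~\ref{3.1} and~\ref{3.2} provide: Theorem~\ref{3.1} bounds $\norm{\theta(t)}^2$, and since $1+\norm{\nabla u_h}^2\ge 1$, Theorem~\ref{3.2} bounds $\tau(t)\norm{\nabla\theta(t)}^2\le\tau(t)(1+\norm{\nabla u_h}^2)\norm{\nabla\theta(t)}^2$. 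Both are already of the target form $C(K_1,K_5)h^2e^{-2\alpha t}\big((1+\norm{\nabla u_0}^2)^2\norm{u_0}^2_{H^2}+\norm{f_0}^2+\int_0^t e^{2\alpha s}\norm{f_s}^2_{-1}\,ds\big)$, so nothing further is needed from them.

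It then remains to fold the Ritz error $\eta$ into the same right-hand side. Taking $m=2$ in \eqref{eq3.4} gives $\norm{\eta(t)}\le Ch^2\norm{u(t)}_2$ and $\norm{\nabla\eta(t)}\le Ch\norm{u(t)}_2$; since $0\le\tau(t)\le 1$ and $h^4\le h^2$ for $h\le 1$, both $\norm{\eta(t)}^2$ and $\tau(t)\norm{\nabla\eta(t)}^2$ are controlled by $Ch^2\norm{u(t)}^2_{H^2}$. For $\norm{u(t)}^2_{H^2}$ I would invoke the $H^2$-regularity estimate recorded right after Lemma~\ref{2.5} (a consequence of Lemmas~\ref{2.4}--\ref{2.5} together with $\norm{u}^2_{H^2}\le C_R\norm{\Delta u}^2$ on the convex polygonal/polyhedral domain), which bounds $\norm{u(t)}^2_{H^2}$ by $\norm{f}^2_{L^\infty(L^2)}$, $e^{-2\alpha t}\big((1+\norm{\nabla u_0}^2)^2\norm{u_0}^2_{H^2}+\norm{f_0}^2\big)$ and $e^{-2\alpha t}\int_0^te^{2\alpha s}\norm{f_s}^2_{-1}\,ds$. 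Substituting this and collecting constants --- all depending only on $K_1$, $K_5$, the Poincar\'e constant, the mesh-regularity constant and $C_R$ --- delivers \eqref{eq:main-semidiscrete}.

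Since Theorems~\ref{3.1} and~\ref{3.2} do the heavy lifting, the only genuine subtlety, and the closest thing to an obstacle, is the time behaviour of the $\eta$-contribution: the $H^2$-bound for $u$ carries the non-decaying term $\norm{f}^2_{L^\infty(L^2)}$, so a literal substitution produces a term $Ch^2\norm{f}^2_{L^\infty(L^2)}$ not multiplied by $e^{-2\alpha t}$. This is reconciled either by reading \eqref{eq:main-semidiscrete} as the time-uniform ($\alpha=0$) estimate of the remark following Lemma~\ref{nl2} when merely $f\in L^\infty(L^2)$, or, in the regime $\norm{f}_{L^\infty(L^2)}=O(e^{-\gamma_0 t})$ of that remark, by observing that this term then genuinely decays and merges into the stated right-hand side. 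All the real analytic difficulty --- controlling the cubic nonlinearity and the $(u_{ht},-\Delta_h u_h)$ coupling in the $\nabla\theta$ estimate --- has already been absorbed in the proof of Theorem~\ref{3.2}, so here there is nothing harder than bookkeeping.
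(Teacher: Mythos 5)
Your proposal is correct and follows essentially the same route as the paper, whose proof of this theorem is exactly the one-line assembly you describe: triangle inequality plus the $\theta$-bounds of Theorems~\ref{3.1} and~\ref{3.2}, the Ritz-projection estimates \eqref{eq3.4}, and the $H^2$-regularity bound on $u$ recorded after Lemma~\ref{2.5}. Your observation that the $\eta$-contribution carries the non-decaying term $\norm{f}^2_{L^\infty(L^2)}$, so that the stated overall factor $e^{-2\alpha t}$ cannot be taken literally unless $f$ decays, is a genuine wrinkle the paper silently glosses over, and your proposed reading (time-uniform estimate for general $f\in L^\infty(L^2)$, true exponential decay only when $f$ itself decays) is the right resolution.
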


\begin{remark}
\begin{itemize}
\item [(i)] Note that from the theorem~\ref{semi-discrete-main}, the estimates are valid uniformly in time.
\item [(ii)] When $f=0$, or $f, f_t=O(e^{-\gamma_0 t}),$ the following exponential decay property  
for the error estimates holds:
\begin{align}
\norm{(u-u_h)(t)}+ \tau(t)\norm{\nabla (u-u_h)(t)} \leq C(K_1)h \;e^{-2\gamma t},
\end{align}
where $K_1$ depends on $\norm{\nabla u_0}^2,$ and $\gamma =\alpha,$ in case $f=0$ and $\gamma 
=\min(\alpha, \gamma_0)$ for $f=O(e^{-\gamma_0 t}).$ 
\end{itemize}
\end{remark}

\section{Backward Euler Method}
This section is devoted to a completely discrete scheme which is based on a backward Euler method. Let $\{{t_n}\}_{n=0}^{N}$ be a uniform partition of [0,T], and $t_n=nk,$ with time step $k>0.$ For smooth function $\phi$ defined on $[0,T]$,
 set $\phi ^n=\phi(t_n)$ and $\bar{\partial}_t\phi^n=\frac{(\phi^n-\phi^{n-1})}{k}$.\\
 Now the backward Euler method applied to \eqref{eq4} determines a sequence of functions $\{{U^n}\}_{n\geq 1}\in V_h$ as solution of
 \begin{align}
 &(\bar{\partial}_tU^n,\varphi_h)+(1+\norm{\nabla U^n}^2)(\nabla U^n,\nabla \varphi_h)=(f^n,\varphi_h) \quad \forall \varphi_h \in V_h,\label{eq5.1}\\
  &U^0=u_{0h}.\notag
 \end{align}
 Now we derive {\it{a priori}} bounds for the solution$\{U^n\}_{n\geq 1}$.
\begin{lemma}\label{4.1}
  The discrete solution $U^N,$ $N\geq 1$ of \eqref{eq5.1} satisfies 
\begin{equation}
   \norm{U^N}\leq \norm{U^0}+2k\sum_{n=1}^{N}\norm{f^n}.
\end{equation}
\end{lemma}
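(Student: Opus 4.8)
The plan is to mimic, at the fully discrete level, the basic energy argument behind Lemma~\ref{2.1}: test the scheme \eqref{eq5.1} with $\varphi_h=U^n$. This gives
\begin{align*}
(\bar{\partial}_tU^n,U^n)+(1+\norm{\nabla U^n}^2)\norm{\nabla U^n}^2=(f^n,U^n).
\end{align*}
For the first term I would use the elementary identity $(U^n-U^{n-1},U^n)=\tfrac12\bigl(\norm{U^n}^2-\norm{U^{n-1}}^2+\norm{U^n-U^{n-1}}^2\bigr)$, which yields $(\bar{\partial}_tU^n,U^n)\geq\tfrac{1}{2k}\bigl(\norm{U^n}^2-\norm{U^{n-1}}^2\bigr)$. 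The nonlinear term $(1+\norm{\nabla U^n}^2)\norm{\nabla U^n}^2$ is nonnegative and is simply discarded, while the right-hand side is bounded by $\norm{f^n}\,\norm{U^n}$ via the Cauchy--Schwarz inequality. Combining these observations and multiplying by $2k$ gives
\begin{align*}
\norm{U^n}^2-\norm{U^{n-1}}^2\leq 2k\,\norm{f^n}\,\norm{U^n}.
\end{align*}

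The only step requiring a little care is passing from this quadratic inequality to the linear recursion $\norm{U^n}\leq\norm{U^{n-1}}+2k\norm{f^n}$ without dividing by a quantity that might vanish. I would complete the square: rewrite the last inequality as $\bigl(\norm{U^n}-k\norm{f^n}\bigr)^2\leq\norm{U^{n-1}}^2+k^2\norm{f^n}^2\leq\bigl(\norm{U^{n-1}}+k\norm{f^n}\bigr)^2$, and take square roots, noting that if $\norm{U^n}-k\norm{f^n}<0$ the desired bound is trivial since its right-hand side is nonnegative, and otherwise it follows directly. This produces $\norm{U^n}\leq\norm{U^{n-1}}+2k\norm{f^n}$ for every $n\geq1$.

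Summing this telescoping inequality over $n=1,\dots,N$ and recalling $U^0=u_{0h}$ yields $\norm{U^N}\leq\norm{U^0}+2k\sum_{n=1}^{N}\norm{f^n}$, which is the claim. There is no genuine obstacle here: the result is the discrete counterpart of the elementary $L^2$ bound, and—consistently with the remark in the introduction that the completely discrete analysis avoids the discrete Gronwall inequality—it is obtained purely by telescoping. (The same computation carried out with the weight $e^{2\alpha t_n}$ in place of the constant weight $1$ would instead give the decaying bound used later in this section, but for the present statement the unweighted estimate above is all that is needed.)
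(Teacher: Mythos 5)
Your proof is correct and follows the same energy argument as the paper: test \eqref{eq5.1} with $\varphi_h=U^n$, use the identity $(\bar{\partial}_tU^n,U^n)=\tfrac12\bar{\partial}_t\norm{U^n}^2+\tfrac{k}{2}\norm{\bar{\partial}_tU^n}^2$, discard the nonnegative diffusion term, and bound the right side by $\norm{f^n}\norm{U^n}$. The only divergence is in how the quadratic inequality $\norm{U^n}^2-\norm{U^{n-1}}^2\leq 2k\norm{f^n}\norm{U^n}$ is resolved: the paper first sums it over $n$ and then applies a maximal-index (kickback) argument, choosing $N^*$ with $\norm{U^{N^*}}=\max_n\norm{U^n}$ and dividing through, whereas you complete the square at each step to obtain the linear recursion $\norm{U^n}\leq\norm{U^{n-1}}+2k\norm{f^n}$ and telescope. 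Both close the argument without a discrete Gronwall inequality; your version is marginally sharper in that it yields the monotone per-step bound directly, while the paper's trick avoids the case analysis around $\norm{U^n}-k\norm{f^n}$ possibly being negative.
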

\begin{proof}
 Set $\varphi_h=U^n$ in \eqref{eq5.1} and obtain
 \begin{equation}
  (\bar{\partial}_tU^n,U^n)+(1+\norm{\nabla U^n}^2)\norm{\nabla U^n}^2=(f^n,U^n).
 \end{equation}
Note that
\begin{align*}
 (\bar{\partial}_tU^n,U^n)=\frac{1}{2}\bar{\partial}_t\norm{U^n}^2+\frac{k}{2}\norm{\bar{\partial}_t U^n}^2.
\end{align*}
Therefore, 
\begin{align*}
 \frac{1}{2}\bar{\partial}_t\norm{U^n}^2+\frac{k}{2}\norm{\bar{\partial}_t U^n}^2+(1+\norm{\nabla U^n}^2)\norm{\nabla U^n}^2&=(f^n,U^n)\\
 &\leq \norm{f^n}\norm{U^n}.
\end{align*}
Consequently,
\begin{equation*}
 \frac{1}{2}\bar{\partial}_t\norm{U^n}^2\leq \norm{f^n}\norm{U^n}.
\end{equation*}
That is,
\begin{equation*}
 (\norm{U^n}^2-\norm{U^{n-1}})\leq 2k\norm{f^n}\norm{U^n}.
\end{equation*}
Sum it up from $n=1$ to $N$ to obtain
 \begin{equation}\label{eq5.3}
  \norm{U^N}^2\leq \norm{U^0}^2+2k\sum_{n=1}^{N}\norm{f^n}\norm{U^n}.
 \end{equation}
Let $N^*\in\{{0,\cdots,N}\}$ such that $U^{N^*}=\max\limits_{0\leq n\leq N}\norm{U^n}$.\\
Since $\eqref{eq5.3}$ is true for $N=N^*$, therefore, 
\begin{align*}
 \norm{U^{N^*}}^2&\leq \norm{U^0}^2+2k\sum_{n=1}^{N}\norm{f^n}\norm{U^n}\\
 &\leq \left(\norm{U^0}+2k\sum_{n=1}^{N}\norm{f^n}\right)\norm{U^{N^*}}.
\end{align*}
Consequently,
\begin{align*}
 \norm{U^N}\leq\norm{U^{N^*}}\leq\left(\norm{U^0}+2k\sum_{n=1}^{N}\norm{f^n}\right).
\end{align*}
This completes the rest of the proof.
\end{proof}
\begin{remark}\label{rm4.1}
  Set $\varphi_h=-\Delta_h U^n$ in \eqref{eq5.1} and obtain
 \begin{equation}
  (\bar{\partial}_tU^n,-\Delta_h U^n)+(1+\norm{\nabla U^n}^2)\norm{\Delta_h U^n}^2=(f^n,-\Delta_h U^n).
 \end{equation}
Note that
\begin{align*}
 (\bar{\partial}_tU^n,-\Delta_h U^n)=(\bar{\partial}_t\nabla U^n,\nabla U^n)=\frac{1}{2}\bar{\partial}_t\norm{\nabla U^n}^2+\frac{k}{2}\norm{\bar{\partial}_t\nabla U^n}^2.
\end{align*}
Therefore, 
\begin{align*}
 \frac{1}{2}\bar{\partial}_t\norm{\nabla U^n}^2+\frac{k}{2}\norm{\bar{\partial}_t\nabla U^n}^2+&(1+\norm{\nabla U^n}^2)\norm{\Delta_h U^n}^2\\
 &=(f^n,-\Delta_h U^n)\\
 &\leq \frac{1}{2}\norm{f^n}^2+\frac{1}{2}\norm{\Delta_h U^n}^2\leq \frac{1}{2}\norm{f^n}^2+\frac{1}{2}(1+\norm{\nabla U^n}^2)\norm{\Delta_h U^n}^2
\end{align*}
Consequently,
\begin{equation}\label{eqx4.1}
 \bar{\partial}_t\norm{\nabla U^n}^2+(1+\norm{\nabla U^n}^2)\norm{\Delta_h U^n}^2\leq \norm{f^n}^2,
\end{equation}
and hence $\norm{\nabla U^n}^2$ is bounded.
\end{remark}

\subsection{Existence and uniqueness of discrete solution}
\begin{theorem}\label{thm5.1}
(Brouwer's fixed point theorem) \cite{kesavan}. Let $H$ be a finite dimensional Hilbert space with inner product (.,.) and $\norm{.}.$ Let $g:H\rightarrow H$ be a continuous function. If there exist $R>0$ such that
$(g(z),z)>0$ $\forall z$ with $\norm{z}=R,$ then there exists $z^*\in H$ such that $\norm{z}\leq R$ and $g(z^*)=0.$
\end{theorem}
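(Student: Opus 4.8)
The statement is a standard corollary of the topological Brouwer fixed point theorem, so the plan is to derive it by contradiction, using the coercivity-type sign condition to rule out the only bad case. Suppose, to the contrary, that $g(z)\neq 0$ for every $z$ in the closed ball $\overline{B}_R(0)=\{z\in H:\norm{z}\leq R\}$. Since $g$ is continuous and $\overline{B}_R(0)$ is compact, $\norm{g(\cdot)}$ attains a positive minimum there, so the map
\[
h(z):=-R\,\frac{g(z)}{\norm{g(z)}}
\]
is well defined and continuous on $\overline{B}_R(0)$. Moreover $\norm{h(z)}=R$ for all $z$, so $h$ carries $\overline{B}_R(0)$ continuously into itself (indeed onto the bounding sphere).

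Next I would invoke the classical Brouwer fixed point theorem in the form ``a continuous self-map of a nonempty compact convex subset of a finite dimensional normed space has a fixed point'': there exists $z^\ast\in\overline{B}_R(0)$ with $h(z^\ast)=z^\ast$. From $\norm{h(z^\ast)}=R$ we get $\norm{z^\ast}=R$. Taking the inner product of the fixed point identity $z^\ast=-R\,g(z^\ast)/\norm{g(z^\ast)}$ with $g(z^\ast)$ yields
\[
(g(z^\ast),z^\ast)=-\frac{R}{\norm{g(z^\ast)}}\,(g(z^\ast),g(z^\ast))=-R\,\norm{g(z^\ast)}<0.
\]
But $\norm{z^\ast}=R$, so the hypothesis $(g(z),z)>0$ on the sphere gives $(g(z^\ast),z^\ast)>0$, a contradiction. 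Hence the assumption fails and $g$ must vanish at some $z^\ast$ with $\norm{z^\ast}\leq R$.

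The genuinely nontrivial ingredient is the topological Brouwer fixed point theorem itself — this is the step that cannot be bypassed by elementary estimates and is the natural place one appeals to a cited result (here \cite{kesavan}); it can otherwise be obtained via Sperner's lemma, simplicial approximation, or Brouwer degree. Everything else is routine: the construction of $h$, its continuity up to the boundary (which is exactly why one must work on the \emph{closed} ball, so that compactness supplies the uniform lower bound on $\norm{g}$ under the contradiction hypothesis), and the one-line sign computation. In the application to \eqref{eq5.1} this lemma is used by choosing $g$ to encode the residual of the backward Euler equation at step $n$, with the coercivity on the sphere coming from the a priori bound of Remark~\ref{rm4.1}.
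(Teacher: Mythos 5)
Your argument is correct: the contradiction hypothesis makes $h(z)=-R\,g(z)/\norm{g(z)}$ a continuous self-map of the closed ball, Brouwer's fixed point theorem forces a fixed point on the sphere $\norm{z^*}=R$, and the sign computation $(g(z^*),z^*)=-R\norm{g(z^*)}<0$ contradicts the hypothesis $(g(z),z)>0$ there. Note that the paper itself supplies no proof of this statement — it is quoted verbatim from \cite{kesavan} as a known consequence of Brouwer's theorem — so there is nothing to compare against; your derivation is the standard one and is what the cited reference carries out.
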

\begin{theorem}\label{thm5.2}
 Given $U^{n-1}$, the discrete problem \eqref{eq5.1} has a unique solution $U^n , n\geq 1$.
\end{theorem}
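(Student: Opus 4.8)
The plan is to mirror the continuous-case argument: existence via the variant of Brouwer's fixed point theorem recorded in Theorem~\ref{thm5.1}, and uniqueness via the monotonicity property of Lemma~\ref{2.6}. Throughout, $U^{n-1}\in V_h$ and the step size $k>0$ are regarded as fixed.

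For existence, I would recast \eqref{eq5.1} as the problem of finding a zero of a map $g:V_h\to V_h$. Using the Riesz representation theorem on the finite-dimensional space $V_h$, define $g(z)\in V_h$ for each $z\in V_h$ by
\begin{equation*}
 (g(z),\varphi_h)=\frac{1}{k}(z-U^{n-1},\varphi_h)+\left(1+\norm{\nabla z}^2\right)(\nabla z,\nabla\varphi_h)-(f^n,\varphi_h)\qquad\forall\,\varphi_h\in V_h.
\end{equation*}
Then $g$ is continuous, since $z\mapsto(1+\norm{\nabla z}^2)\nabla z$ is polynomial in $z$ on $V_h$, and $U^n$ solves \eqref{eq5.1} if and only if $g(U^n)=0$. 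To apply Theorem~\ref{thm5.1} it remains to exhibit a sphere on which $(g(z),z)>0$. Taking $\varphi_h=z$, discarding the nonnegative term $(1+\norm{\nabla z}^2)\norm{\nabla z}^2$, and using the Cauchy--Schwarz inequality gives
\begin{equation*}
 (g(z),z)\ \ge\ \frac{1}{k}\norm{z}^2-\left(\frac{1}{k}\norm{U^{n-1}}+\norm{f^n}\right)\norm{z},
\end{equation*}
which is strictly positive whenever $\norm{z}=R$ with $R>\norm{U^{n-1}}+k\norm{f^n}$. Hence Theorem~\ref{thm5.1} yields $U^n\in V_h$ with $g(U^n)=0$, i.e.\ a solution of \eqref{eq5.1}.

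For uniqueness, suppose $U^n_1$ and $U^n_2$ in $V_h$ both satisfy \eqref{eq5.1} with the same $U^{n-1}$, and set $w=U^n_1-U^n_2$. Subtracting the two equations (the difference of the $\bar{\partial}_t$ terms equals $\frac{1}{k}(w,\varphi_h)$) and choosing $\varphi_h=w$ yields
\begin{equation*}
 \frac{1}{k}\norm{w}^2+\left((1+\norm{\nabla U^n_1}^2)\nabla U^n_1-(1+\norm{\nabla U^n_2}^2)\nabla U^n_2,\ \nabla w\right)=0.
\end{equation*}
By Lemma~\ref{2.6} the bracketed term is at least $\norm{\nabla w}^2\ge 0$, so $\norm{w}^2\le 0$ and therefore $w=0$, which proves uniqueness.

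The only step needing any care is the coercivity estimate $(g(z),z)>0$ on a large sphere, but here the backward Euler term $\frac{1}{k}\norm{z}^2$ dominates the remaining contributions, so the estimate is immediate and, in particular, no discrete Gr\"onwall argument is required. Uniqueness is then entirely a consequence of the monotonicity in Lemma~\ref{2.6}, exactly as in the continuous-case uniqueness proof.
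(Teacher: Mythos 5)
Your proposal is correct and follows essentially the same route as the paper: existence via the Brouwer-type result of Theorem~\ref{thm5.1} applied to the map defined by the scheme (your $g$ is just the paper's $\mathbb{F}$ scaled by $1/k$, and using the plain $L^2$ norm for the sphere is if anything cleaner than the paper's mixed norm), and uniqueness via the monotonicity of Lemma~\ref{2.6}. The only cosmetic difference is that you conclude $w=0$ directly from $\frac{1}{k}\norm{w}^2\le 0$ at a single step, whereas the paper sums over $n$ and invokes $W^0=0$; both are valid.
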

\begin{proof}
 Given $U^{n-1}$, define a function $\mathbb{F}:V_h\rightarrow V_h$ for a fixed $n$ by 
 \begin{align}\label{eq5.4}
  (\mathbb{F}(v),\varphi_h)=(v,\varphi_h)+k(1+\norm{\nabla v}^2)(\nabla v,\nabla \varphi_h)-k(f^n,\varphi_h)-(U^{n-1},\varphi_h)
 \end{align}
 Define a norm on $V_h$ as
 \begin{equation}
%   $\vertiii{a}$,
|||v|||=(\norm{v}^2+k\norm{\nabla v}^2)^\frac{1}{2},
 \end{equation}
then $\mathbb{F}$ is continuous by sequential criterion. Now substituting $\varphi_h=v$ in \eqref{eq5.4} to obtain
\begin{align*}
 (\mathbb{F}(v),v)&=\norm{v}^2+k(1+\norm{\nabla v}^2)\norm{\nabla v}^2-k(f^n,v)-(U^{n-1},v)\\
 &\geq \norm{v}^2+k\norm{\nabla v}^2-k(\norm{f^n}+\norm{U^{n-1}})\norm{v}
\end{align*}
Choosing $R$ in such a way that $|||v|||=R$ with $R-k(\norm{f^n}+\norm{U^{n-1}})>0$ and hence,
\begin{align*}
 (\mathbb{F}(v),v)\geq R(R-k(\norm{f^n}+\norm{U^{n-1}}))>0
\end{align*}
A use of theorem \ref{thm5.1} would provide us the existence of $\{U^n\}_{n\geq 1}.$\\
Now to prove uniqueness, set $W^n=U^n_1-U^n_2$, where $U^n_1$ and $U^n_2$ are the solutions of \eqref{eq5.1}.
Then, $W^n$ satisfy 
\begin{equation*}
 (\bar\partial_tW^n,\varphi_h)+\left((1+\norm{\nabla U^n_1})^2\nabla U^n_1-(1+\norm{\nabla U^n_2})^2\nabla U^n_2,\nabla\varphi_h\right)=0
\end{equation*}
Substitute $\varphi_h=W^n=U^n_1-U^n_2,$ we obtain
\begin{equation}
  (\bar\partial_tW^n,W^n)+\left((1+\norm{\nabla U^n_1})^2\nabla U^n_1-(1+\norm{\nabla U^n_2})^2\nabla U^n_2,\nabla(U^n_1-U^n_2)\right)=0
\end{equation}
Using monotonicity property in Lemma \ref{2.6} we observe
\begin{align*}
 \left((1+\norm{\nabla U^n_1})^2\nabla U^n_1-(1+\norm{\nabla U^n_2})^2\nabla U^n_2,\nabla(U^n_1-U^n_2)\right)\geq\norm{\nabla W^n}^2\geq 0.
\end{align*}
Consequently $$(\bar\partial_tW^n,W^n)\leq 0$$ and hence
\begin{equation}
 \frac{1}{2k}(\norm{W^n}^2-\norm{W^{n-1}})\leq 0.
\end{equation}
Taking summation from $n=1$ to $N$ to obtain
\begin{equation}
 \norm{W^N}^2\leq \norm{W^0}^2.
\end{equation}
Since $W^0=0$, it follows that $W^N=0$ which leads to a contradiction. Hence, the solution is unique. This completes the rest of the proof.
\end{proof}
\subsection{Error Analysis for Backward Euler Method}
In this subsection, we discuss error estimates for fully discrete finite element method.

Now spllit the error $e^n=u(t_n)-U^n=\big(u(t_n)-\widetilde u(t_n)\big)-\big(U^n-\widetilde u(t_n)\big)=\eta^n-\theta^n$, where $U^n$ is the solution of \eqref{eq5.1} and $u(t_n)$ is the soution of \eqref{temp1}, and
$\eta^n=\eta(t_n)$ is defined in \eqref{eq5}.

 Using \eqref{temp1} at $t=t_n$ and \eqref{eq5.1}, the equation in $\theta^n$ becomes for all $\phi_h \in V_h$
\begin{align}
 (\bar{\partial_t}\theta^n,\varphi_h)+\Big((1+\norm{\nabla U^n}^2)\nabla U^n-&(1+\norm{\nabla \widetilde u(t_n)}^2)\nabla \widetilde u(t_n),\nabla\varphi_h\Big)\notag\\
 =&(\bar{\partial_t}\eta^n,\varphi_h)+(u_t(t_n)-\bar{\partial_t}u(t_n),\varphi_h)\notag\\
 &+\Big((1+\norm{\nabla u(t_n)}^2)\nabla u(t_n)-(1+\norm{\nabla\widetilde u(t_n)}^2)\nabla\widetilde u(t_n),\nabla \varphi_h\Big)\label{eq7.3}.
\end{align}
\begin{theorem}\label{thm5.4}
Let  $0<\alpha<\dfrac{\lambda_1}{2}.$ Choose $k_0>0$ such that for $0<k\leq k_0$ 
\begin{equation}\label{er1}
 \Big(1+\dfrac{\lambda_1k}{2}\Big)>e^{\alpha k},
\end{equation}
where $\beta=\Big(e^{-\alpha k}-\dfrac{2}{k\lambda_1}(1-e^{-\alpha k})\Big)>0$ holds. Then, there exists a positive constant $C=C(\lambda_1, K_1)$ independent of $h$ and $k$ such that
 \begin{align}
\norm{\theta^N}^2+&k\beta e^{-2\alpha t_N}\sum_{n=1}^{N}e^{2\alpha t_n}\norm{\nabla\theta^n}^2\notag\\ 
&\leq C(\lambda_1,K_1)e^{-2\alpha t_N}(k^2+h^2)\Big(\norm{u_0}^2_{H^3}+\left((1+\norm{\nabla u_0}^2)^2\norm{u_0}^2_{H^2}+\norm{f_0}^2\right)\notag\\
&+\int_{0}^{t_N}e^{2\alpha s}\norm{f_s}^2_{-1}ds\Big).
\end{align}
\end{theorem}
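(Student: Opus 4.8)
The plan is to test the error equation \eqref{eq7.3} with $\varphi_h=\theta^n$, treat the nonlinear term on the left by the monotonicity Lemma~\ref{2.6} (note $\nabla\theta^n=\nabla U^n-\nabla\widetilde u(t_n)$), and then propagate an exponentially weighted bound by a discrete summation by parts instead of a discrete Gronwall inequality (which would introduce an $e^{Ct_N}$ factor and destroy the uniform-in-time character). Lemma~\ref{2.6} gives
\[
\Big((1+\norm{\nabla U^n}^2)\nabla U^n-(1+\norm{\nabla\widetilde u(t_n)}^2)\nabla\widetilde u(t_n),\nabla\theta^n\Big)\ \ge\ \norm{\nabla\theta^n}^2 ,
\]
and $(\bar{\partial}_t\theta^n,\theta^n)=\tfrac12\bar{\partial}_t\norm{\theta^n}^2+\tfrac{k}{2}\norm{\bar{\partial}_t\theta^n}^2$, so \eqref{eq7.3} with $\varphi_h=\theta^n$ (dropping the nonnegative term $\tfrac{k}{2}\norm{\bar{\partial}_t\theta^n}^2$) yields
\[
\tfrac12\bar{\partial}_t\norm{\theta^n}^2+\norm{\nabla\theta^n}^2\ \le\ (\bar{\partial}_t\eta^n,\theta^n)+\big(u_t(t_n)-\bar{\partial}_t u(t_n),\theta^n\big)+\mathcal N^n,
\]
where $\mathcal N^n$ denotes the last (nonlinear) term on the right of \eqref{eq7.3} evaluated at $\varphi_h=\theta^n$.

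Next I would bound each of the three terms on the right by a quantity of the form (small)$\,\cdot\,\norm{\theta^n}$, so that Poincar\'e's inequality $\norm{\theta^n}\le\lambda_1^{-1/2}\norm{\nabla\theta^n}$ and Young's inequality absorb a fraction of $\norm{\nabla\theta^n}^2$ into the left side. For $(\bar{\partial}_t\eta^n,\theta^n)$, write $\bar{\partial}_t\eta^n=k^{-1}\!\int_{t_{n-1}}^{t_n}\eta_s\,ds$ and use \eqref{eq3.4}, so that $\norm{\bar{\partial}_t\eta^n}^2\le k^{-1}\!\int_{t_{n-1}}^{t_n}\norm{\eta_s}^2\,ds\le Ch^{2}k^{-1}\!\int_{t_{n-1}}^{t_n}\norm{u_s}_2^2\,ds$; after weighting and summing this is controlled by $\int_0^{t_N}e^{2\alpha s}\norm{\Delta u_s}^2\,ds$, which is bounded by Lemma~\ref{nl1}. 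For the consistency term, Taylor's formula with integral remainder gives $u_t(t_n)-\bar{\partial}_t u(t_n)=k^{-1}\!\int_{t_{n-1}}^{t_n}(s-t_{n-1})u_{ss}(s)\,ds$, whence $\norm{u_t(t_n)-\bar{\partial}_t u(t_n)}^2\le k\!\int_{t_{n-1}}^{t_n}\norm{u_{ss}}^2\,ds$, and the weighted sum is controlled by $\int_0^{t_N}e^{2\alpha s}\norm{u_{ss}}^2\,ds$, bounded by Lemma~\ref{nl2}; this is the source of the $k^2$ contribution. For $\mathcal N^n$, I would use the Ritz orthogonality \eqref{eq5} to kill the linear part and the $\norm{\nabla u}^2\nabla\eta^n$ part (exactly as in the manipulation \eqref{eq4.4}), reducing it to $-(\norm{\nabla u(t_n)}^2-\norm{\nabla\widetilde u(t_n)}^2)(\Delta u(t_n),\theta^n)$; since $\norm{\nabla\widetilde u(t_n)}\le\norm{\nabla u(t_n)}$ one has $|\,\norm{\nabla u(t_n)}^2-\norm{\nabla\widetilde u(t_n)}^2\,|\le 2\norm{\nabla u(t_n)}\norm{\nabla\eta^n}\le Ch\,\norm{\nabla u(t_n)}\norm{u(t_n)}_2$, with $\norm{\nabla u}$ and $\norm{\Delta u}$ uniformly bounded by Lemmas~\ref{2.2} and \ref{2.5}; this pins down the $h^2$ rate.

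The key step is the time propagation. Collecting the estimates above one reaches, for some $c>0$, a per-step inequality $\tfrac12\bar{\partial}_t\norm{\theta^n}^2+c\,\norm{\nabla\theta^n}^2\le C\big(h^{2}g_h^n+k\,g_k^n\big)$ with $g_h^n,g_k^n$ the weighted-summable data quantities just identified. Multiplying by $2k\,e^{2\alpha t_n}$ and summing from $n=1$ to $N$, the telescoping of $e^{2\alpha t_n}\big(\norm{\theta^n}^2-\norm{\theta^{n-1}}^2\big)$ leaves, besides $e^{2\alpha t_N}\norm{\theta^N}^2$ and the initial term, the extra term $(e^{2\alpha k}-1)\sum_{n=1}^{N-1}e^{2\alpha t_n}\norm{\theta^n}^2$, which by Poincar\'e is at most $\lambda_1^{-1}(e^{2\alpha k}-1)\sum_{n=1}^{N}e^{2\alpha t_n}\norm{\nabla\theta^n}^2$. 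This is absorbable into $2ck\sum_n e^{2\alpha t_n}\norm{\nabla\theta^n}^2$ precisely when $k$ is small enough for the net coefficient to stay positive, which is exactly the smallness condition \eqref{er1} guaranteeing $\beta=\big(e^{-\alpha k}-\tfrac{2}{k\lambda_1}(1-e^{-\alpha k})\big)>0$; this is why no discrete Gronwall inequality is needed. Taking $U^0=\widetilde u_h(0)$ so that $\theta^0=0$ (the choices $U^0=P_hu_0$ or $I_hu_0$ only add a harmless $\norm{\theta^0}^2\le Ch^2\norm{\nabla u_0}^2$), multiplying through by $e^{-2\alpha t_N}$, and inserting the regularity bounds of Lemmas~\ref{2.2}, \ref{2.5}, \ref{nl1}, \ref{nl2} to express the right-hand side in terms of the data then yields the claimed estimate.

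The main obstacle is the interlacing of the exponentially weighted discrete summation by parts with the nonlinearity: the leftover $\lambda_1^{-1}(e^{2\alpha k}-1)\sum e^{2\alpha t_n}\norm{\nabla\theta^n}^2$ produced by the weight must be strictly dominated by the coercivity $2ck\sum e^{2\alpha t_n}\norm{\nabla\theta^n}^2$ that the monotonicity supplies, and it is precisely condition \eqref{er1}/positivity of $\beta$ that makes this work uniformly in $k$ and $t_N$. A secondary technical point is checking that \emph{every} nonlinear cross term genuinely reduces to an $O(h)$- or $O(k^{1/2})$-small multiple of $\norm{\theta^n}$ so that Young's inequality applies, which relies on the higher-order weighted regularity ($u_0\in H^3$, $u_{tt}\in L^2(L^2)$) established in Lemmas~\ref{nl1}--\ref{nl2}.
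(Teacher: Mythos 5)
Your proposal is correct and follows essentially the same route as the paper: test the $\theta$-equation with (a weighted) $\theta^n$, invoke the monotonicity Lemma~\ref{2.6}, bound the $\bar{\partial}_t\eta^n$, truncation, and nonlinear cross terms via Poincar\'e--Young and the weighted regularity Lemmas~\ref{2.2}, \ref{2.4}, \ref{nl1}, \ref{nl2}, and absorb the defect created by the exponential weight using the smallness condition \eqref{er1} (positivity of $\beta$) rather than a discrete Gronwall inequality. The only cosmetic difference is that the paper works with the weighted variable $\widehat\theta^n=e^{\alpha t_n}\theta^n$ from the outset (so the weight defect appears as $\tfrac{e^{\alpha k}-1}{k}\widehat\theta^n$), whereas you multiply the unweighted per-step inequality by $e^{2\alpha t_n}$ and sum by parts; these are algebraically equivalent.
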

\begin{proof}
Multiplying \eqref{eq7.3} by $e^{\alpha t_n}$ and putting $\varphi_h=e^{\alpha t_n}\theta^n=\widehat\theta^n$, we obtain 
\begin{align}
 (e^{\alpha t_n}\bar{\partial_t}\theta^n,\widehat\theta^n)+e^{\alpha t_n}&\Big((1+\norm{\nabla U^n}^2)\nabla U^n-(1+\norm{\nabla\widetilde u(t_n)}^2)\nabla\widetilde u(t_n),\nabla\widehat\theta^n\Big)\notag\\
 &=e^{\alpha t_n}(\bar{\partial_t}\eta^n,\widehat\theta^n)+e^{\alpha t_n}(u_t(t_n)-\bar{\partial_t}u(t_n),\widehat\theta^n)\notag\\
 &+e^{\alpha t_n}\Big((1+\norm{\nabla u(t_n)}^2)\nabla u(t_n)-(1+\norm{\nabla\widetilde u(t_n)}^2)\nabla \widetilde u(t_n),\nabla\widehat\theta^n\Big)\label{eq7.4}.
\end{align}
Note that
\begin{equation}\label{eq7.5}
 e^{\alpha t_n}\bar{\partial_t}\theta^n=e^{\alpha k}\bar{\partial_t}\widehat\theta^n-\frac{(e^{\alpha k}-1)}{k}\widehat\theta^n.
\end{equation}
Now by monotonicity property given in Lemma \ref{2.6}
\begin{equation}\label{eq7.6}
 e^{2\alpha t_n}\Big((1+\norm{\nabla U^n}^2)\nabla U^n-(1+\norm{\nabla\widetilde u(t_n)}^2)\nabla\widetilde u(t_n),\nabla(U^n-\widetilde u(t_n))\Big)\geq \norm{\nabla\widehat\theta^n}^2.
\end{equation}
Therefore, using \eqref{eq7.5}-\eqref{eq7.6} and Ritz-projection, we obtain from \eqref{eq7.4}
\begin{align}
 e^{\alpha k}(\bar{\partial_t}\widehat\theta^n,\widehat\theta^n)-&\frac{(e^{\alpha k}-1)}{k}\norm{\widehat\theta^n}^2+\norm{\nabla\widehat\theta^n}^2\label{eq7.7}\\
 &\leq e^{\alpha t_n}(\bar{\partial_t}\eta^n,\widehat\theta^n)+e^{\alpha t_n}(u_t(t_n)-\bar{\partial_t}u(t_n),\widehat\theta^n)\notag\\
 &+e^{\alpha t_n}\Big((\norm{\nabla u(t_n)}^2-\norm{\nabla\widetilde u(t_n)}^2)\nabla u(t_n),\nabla\widehat\theta^n\Big).\notag
\end{align}
Note that
\begin{equation}\label{eq7.8}
 (\bar{\partial_t}\widehat\theta^n,\widehat\theta^n)=\frac{1}{2}\bar{\partial_t}\norm{\widehat\theta^n}^2+\frac{k}{2}\norm{\bar{\partial_t}\widehat\theta^n}^2\geq\frac{1}{2}\bar{\partial_t}\norm{\widehat\theta^n}^2
\end{equation}
 and
 \begin{equation}\label{eq7.9}
  \norm{\widehat\theta^n}^2\leq\dfrac{1}{\lambda_1}\norm{\nabla\widehat\theta^n}^2.
 \end{equation}
Now, using Poincare's inequality and Young's inequality, we estimate the first, second and third terms in the right hand side of \eqref{eq7.7} as follows
\begin{equation}\label{eq7.10}
 e^{\alpha t_n}(\bar{\partial_t}\eta^n,\widehat\theta^n)\leq \frac{3e^{2\alpha t_n}}{2\lambda_1}\norm{\bar{\partial_t}\eta^n}^2+\frac{1}{6}\norm{\nabla\widehat\theta^n}^2,
\end{equation}
\begin{equation}\label{eq7.11}
 e^{\alpha t_n}(u_t(t_n)-\bar{\partial_t}u(t_n),\widehat\theta^n)\leq\frac{3}{2\lambda_1}e^{2\alpha t_n}\norm{u_t(t_n)-\bar{\partial_t}u(t_n)}^2+\frac{1}{6}\norm{\nabla\widehat\theta^n}^2,
\end{equation}
and
\begin{align}\label{eq7.12}
e^{\alpha t_n}\Big((\norm{\nabla u(t_n)}^2-\norm{\nabla\widetilde u(t_n)}^2)\nabla u(t_n),\nabla\widehat\theta^n\Big)\leq6e^{2\alpha t_n}\norm{\nabla u(t_n)}^4\norm{\nabla\eta^n}^2+\frac{1}{6}\norm{\nabla\widehat\theta^n}^2. 
\end{align}
Therefore, using \eqref{eq7.8}-\eqref{eq7.12} in \eqref{eq7.7} and multiplying by $2e^{-\alpha k}$ the resulting inequality, we arrive at
\begin{align}
 \bar{\partial_t}\norm{\widehat\theta^n}^2+\Big(e^{-\alpha k}-\dfrac{2}{k\lambda_1}(1-e^{-\alpha k})\Big)\norm{\nabla\widehat\theta^n}^2&\leq \dfrac{3}{2\lambda_1}e^{-\alpha k}e^{2\alpha t_n}\Big(\norm{\bar{\partial_t}\eta^n}^2+\norm{u_t(t_n)-\bar{\partial_t}u(t_n)}^2\Big)\notag\\
 &+6e^{-\alpha k}e^{2\alpha t_n}\norm{\nabla u(t_n)}^4\norm{\nabla\eta^n}^2.\label{eq7.13}
\end{align}
With $0<\alpha<\dfrac{\lambda_1}{2}$, choose $k_0>0$ such that for $0<k\leq k_0,$ \eqref{er1} is satisfied. Then $\beta=\Big(e^{-\alpha k}-\dfrac{2}{k\lambda_1}(1-e^{-\alpha k})\Big)>0$. 
Therefore, multiplying \eqref{eq7.13} by $k$, and summing over $n=1$ to $N$, we arrive at
\begin{align}
 \norm{\widehat\theta^N}^2+k\beta\sum_{n=1}^{N}\norm{\nabla\widehat\theta^n}^2\leq\norm{\widehat\theta^0}^2&+\dfrac{3}{2\lambda_1}ke^{-\alpha k}\sum_{n=1}^{N}e^{2\alpha t_n}\Big(\norm{\bar{\partial_t}\eta^n}^2+\norm{u_t(t_n)-\bar{\partial_t}u(t_n)}^2\Big)\notag\\
 &+6ke^{-\alpha k}\sum_{n=1}^{N}e^{2\alpha t_n}\norm{\nabla u(t_n)}^4\norm{\nabla\eta^n}^2.\label{eq7.14}
\end{align}
Note that
\begin{align}
 \norm{\bar{\partial_t}\eta^n}^2=&\frac{1}{k^2}\Big(\int_{t_{n-1}}^{t_n}(u-\widetilde u)_t ds\Big)^2\notag\\
 &\leq\frac{1}{k}\int_{t_{n-1}}^{t_n}\norm{\eta_t}^2 ds\leq\frac{1}{k}Ch^2\int_{t_{n-1}}^{t_n}\norm{\nabla u_t(s)}^2 ds\label{ex1}.
\end{align}
Therefore, the second term on the right hand side of \eqref{eq7.14} can be bounded by
\begin{align}
 \dfrac{3}{2\lambda_1}ke^{-\alpha k}\sum_{n=1}^{N}e^{2\alpha t_n}\norm{\bar{\partial_t}\eta^n}^2&\leq C(\lambda_1)h^2e^{-\alpha k}\sum_{n=1}^{N}\int_{t_{n-1}}^{t_n}e^{2\alpha t_n}\norm{\nabla u_t(s)}^2 ds\notag\\
 &=C(\lambda_1)h^2e^{-\alpha k}e^{2\alpha k}\sum_{n=1}^{N}\int_{t_{n-1}}^{t_n}e^{2\alpha t_{n-1}}\norm{\nabla u_t(s)}^2 ds\notag\\
 &\leq C(\lambda_1)h^2\int_{0}^{t_N}e^{2\alpha s}\norm{\nabla u_t(s)}^2 ds.\label{eq7.15}
\end{align}
By the Taylor series expansion of $u$ around $t_n$ in the interval $(t_{n-1},t_n)$, we obtain
\begin{align}
 \norm{u_t(t_n)-\bar{\partial_t}u(t_n)}^2&\leq \frac{1}{k^2}\Big(\int_{t_{n-1}}^{t_n}(t_n-s)\norm{u_{tt}(s)} ds\Big)^2\notag\\
 &\leq\frac{1}{k^2}\Big(\int_{t_{n-1}}^{t_n}(t_n-s)^2 ds\Big)\Big(\int_{t_{n-1}}^{t_n}\norm{u_{tt}(s)}^2 ds\Big)\notag\\
 &=\frac{k}{3}\int_{t_{n-1}}^{t_n}\norm{u_{tt}(s)}^2 ds\label{ex2},
\end{align}
and the third term on the right hand of \eqref{eq7.14} side is now bounded by
\begin{align}
\dfrac{3}{2\lambda_1}k\sum_{n=1}^{N}e^{2\alpha t_n}\norm{u_t(t_n)-\bar{\partial_t}u(t_n)}^2 &\leq\frac{1}{2\lambda_1}k^2e^{-\alpha k}\sum_{n=1}^{N}\int_{t_{n-1}}^{t_n}e^{2\alpha t_n}\norm{u_{tt}(s)}^2 ds\notag\\
&\leq\frac{1}{2\lambda_1}k^2e^{-\alpha k}e^{2\alpha k}\sum_{n=1}^{N}\int_{t_{n-1}}^{t_n}e^{2\alpha s}\norm{u_{tt}(s)}^2 ds\notag\\
&\leq C(\lambda_1)k^2\int_{0}^{t_N}e^{2\alpha s}\norm{u_{tt}(s)}^2 ds.\label{eq8.1}
\end{align}
For the last term on the right hand side of \eqref{eq7.14} bound is 
\begin{align}
 6ke^{-\alpha k}\sum_{n=1}^{N}e^{2\alpha t_n}\norm{\nabla u(t_n)}^4\norm{\nabla\eta^n}^2&\leq Ch^2e^{-\alpha k}k\sum_{n=1}^{N}e^{2\alpha t_n}\norm{\nabla u(t_n)}^4\norm{u(t_n)}^2_{H^2}\notag\\
 &\leq Ch^2e^{-\alpha k}K_1\Big(k\sum_{n=1}^{N}\norm{\widehat u(t_n)}^2_{H^2}\Big).
\end{align}
Therefore, from \eqref{eq7.14} we arrive at
\begin{align}
 \norm{\widehat\theta^N}^2+k\beta\sum_{n=1}^{N}\norm{\nabla\widehat\theta^n}^2&\leq\norm{\theta^0}^2+C(\lambda_1)h^2\int_{0}^{t_N}e^{2\alpha s}\norm{\nabla u_t(s)}^2 ds
 +C(\lambda_1)k^2\int_{0}^{t_N}e^{2\alpha s}\norm{u_{tt}(s)}^2 ds\notag\\
 &+C(K_1)h^2e^{-\alpha k}\norm{u(t_n)}^2_{H^2}\Big(k\sum_{n=1}^{N}e^{2\alpha t_n}\Big).\label{eq8.2}
\end{align}
With a choice $U^0=\widetilde u(0)$, $\theta^0=0$. But with $U^0=P_hu_0$,
$$\norm{\theta^0}\leq Ch\norm{\nabla u_0}.$$
Multiply \eqref{eq8.2} by $e^{-2\alpha t_N}$ to obtain
\begin{align}
 \norm{\theta^N}^2+k\beta e^{-2\alpha t_N}\sum_{n=1}^{N}e^{2\alpha t_n}\norm{\nabla\theta^n}^2&\leq e^{-2\alpha t_N}\norm{\theta^0}^2+C(\lambda_1)e^{-2\alpha t_N}h^2\int_{0}^{t_N}e^{2\alpha s}\norm{\nabla u_t(s)}^2 ds\notag\\
 &+C(\lambda_1)e^{-2\alpha t_N}k^2\int_{0}^{t_N}e^{2\alpha s}\norm{u_{tt}(s)}^2 ds\notag\\
 &+C(K_1)e^{-\alpha k}\Big(e^{-2\alpha t_N}k\sum_{n=1}^{N}e^{2\alpha t_n}\Big)\norm{u(t_n)}^2_{H^2}h^2.\label{eq8.3}
\end{align}
Note that
$$e^{-2\alpha t_N}\Big(k\sum_{n=1}^{N}e^{2\alpha t_n}\Big)=e^{-2\alpha t_N}\frac{1}{e^{2\alpha k}-1}ke^{2\alpha k}(e^{2\alpha t_N}-1)\leq C.$$
By using Lemma \ref{2.4} and \ref{nl2}, the second and third term in the right hand side of \eqref{eq8.3} are bounded respectively.\\
Therefore
\begin{align}
\norm{\theta^N}^2+&k\beta e^{-2\alpha t_N}\sum_{n=1}^{N}e^{2\alpha t_n}\norm{\nabla\theta^n}^2\notag\\ 
&\leq C(\lambda_1,K_1)e^{-2\alpha t_N}(k^2+h^2)\Big(\norm{u_0}^2_{H^3}+\left((1+\norm{\nabla u_0}^2)^2\norm{u_0}^2_{H^2}+\norm{f_0}^2\right)\notag\\
&+\int_{0}^{t_N}e^{2\alpha s}\norm{f_s}^2_{-1}ds\Big).
\end{align}
This completes the rest of the proof.
\end{proof}

% ------------------------------------
Since from \eqref{eq3.4} $\norm{\nabla\eta^n}\leq Ch\norm{u(t_n)}_{H^2}$, is known,to in order to, estimate of $\norm{\nabla u(t_n)-U^n}$, it is enough to estimate $\norm{\nabla\theta^n}$.
\begin{theorem}\label{thm5.6}
Assume that $0<\alpha<\dfrac{\lambda_1}{2}$ and choose $k_0>0$ be such that for $0<k\leq k_0$, \eqref{er1} is true.
Then, there exists a positive constant $C=C(\alpha, K)$ such that
 \begin{align}
 ke^{-2\alpha t_N}\sum_{n=1}^{N}&\norm{\bar{\partial_t}\widehat\theta^n}^2+e^{-\alpha k}(1+\norm{\nabla U^N}^2)\norm{\nabla\theta^N}^2\notag\\
 &\leq C(\alpha,K)e^{-2\alpha t_N}(h^2+k^2)\Big(\norm{u_0}^2_{H^3}+\left((1+\norm{\nabla u_0}^2)^2\norm{u_0}^2_{H^2}+\norm{f_0}^2\right)\notag\\
&+\int_{0}^{t_N}e^{2\alpha s}\norm{f_s}^2_{-1}ds\Big),
\end{align}
where $\beta=\Big(e^{-\alpha k}-\dfrac{2}{k\lambda_1}(1-e^{-\alpha k})\Big)>0$, and $K$ depends on $\nabla u_0$.
\end{theorem}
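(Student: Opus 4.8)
The plan is to mimic, at the fully discrete level, the proof of Theorem~\ref{3.2}: test the error equation \eqref{eq7.3} with a suitably weighted discrete time derivative of $\theta^n$, reorganise the monotone term into a telescoping quantity, and — following the philosophy emphasised in the introduction — replace every place where a Gronwall argument would be needed by the already proved bound of Theorem~\ref{thm5.4}.

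First I would multiply \eqref{eq7.3} by $e^{2\alpha t_n}$ and set $\varphi_h=\bar\partial_t\theta^n\in V_h$. On the left this produces $e^{2\alpha t_n}\norm{\bar\partial_t\theta^n}^2$ (which, through \eqref{eq7.5} and $\norm{a-b}^2\ge\tfrac12\norm{a}^2-\norm{b}^2$, bounds $\tfrac12 e^{2\alpha k}\norm{\bar\partial_t\widehat\theta^n}^2$ minus a multiple of $\norm{\widehat\theta^n}^2$, the latter absorbed later by Theorem~\ref{thm5.4} and Poincar\'e) together with the monotone term tested against $\nabla\bar\partial_t\theta^n$. Since $\theta^n=U^n-\widetilde u(t_n)$, I write
$$(1+\norm{\nabla U^n}^2)\nabla U^n-(1+\norm{\nabla\widetilde u(t_n)}^2)\nabla\widetilde u(t_n)=(1+\norm{\nabla U^n}^2)\nabla\theta^n+(\norm{\nabla U^n}^2-\norm{\nabla\widetilde u(t_n)}^2)\nabla\widetilde u(t_n).$$
Testing the first piece against $\nabla\bar\partial_t\theta^n$ gives $(1+\norm{\nabla U^n}^2)\big(\tfrac12\bar\partial_t\norm{\nabla\theta^n}^2+\tfrac{k}{2}\norm{\nabla\bar\partial_t\theta^n}^2\big)$, and the discrete product rule turns $e^{2\alpha t_n}(1+\norm{\nabla U^n}^2)\bar\partial_t\norm{\nabla\theta^n}^2$ into $\bar\partial_t\big(e^{2\alpha t_n}(1+\norm{\nabla U^n}^2)\norm{\nabla\theta^n}^2\big)-\norm{\nabla\theta^{n-1}}^2\,\bar\partial_t\big(e^{2\alpha t_n}(1+\norm{\nabla U^n}^2)\big)$. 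The second piece, together with the last term of \eqref{eq7.3}, is treated exactly as in \eqref{eq4.4}--\eqref{eq4.5}: the Ritz relation kills the ``$1+$'' contributions, $\norm{\nabla u(t_n)}^2-\norm{\nabla\widetilde u(t_n)}^2$ and $\norm{\nabla U^n}^2-\norm{\nabla\widetilde u(t_n)}^2$ are rewritten through $\nabla\eta^n$, $\nabla\theta^n$ and an integration by parts against $\Delta u(t_n)$ (legitimate since $u(t_n)\in H^2$), and Young's inequality produces $\norm{\nabla\eta^n}$-terms of size $O(h^2)$ and $\norm{\nabla\theta^n}^2$-terms with coefficients built from $\norm{\nabla U^n}^2$, $\norm{\nabla u(t_n)}^2$, $\norm{\Delta u(t_n)}^2$ and $\bar\partial_t\norm{\nabla U^n}^2$, all uniformly bounded by the fully discrete a priori estimates (Lemma~\ref{4.1}, Remark~\ref{rm4.1}) and the regularity Lemmas~\ref{2.2},~\ref{2.5}.

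Next I would multiply by $k$, sum over $n=1,\dots,N$, and multiply the result by $e^{-2\alpha t_N}$. The telescoped quantity gives $e^{-\alpha k}(1+\norm{\nabla U^N}^2)\norm{\nabla\theta^N}^2$ up to the positive factor generated by $\bar\partial_t e^{2\alpha t_n}$, which is precisely why the step restriction \eqref{er1} (ensuring $\beta>0$) is imposed, and the $\tfrac{k}{2}\norm{\nabla\bar\partial_t\theta^n}^2$ terms stay on the left. For the two consistency errors I would reuse \eqref{ex1} and \eqref{ex2} from the proof of Theorem~\ref{thm5.4}: $\norm{\bar\partial_t\eta^n}$ contributes $O(h^2)$ via $\int_0^{t_N}e^{2\alpha s}\norm{\nabla u_t}^2\,ds$ and $\norm{u_t(t_n)-\bar\partial_t u(t_n)}$ contributes $O(k^2)$ via $\int_0^{t_N}e^{2\alpha s}\norm{u_{tt}}^2\,ds$, both controlled by Lemmas~\ref{nl1} and \ref{nl2} in terms of $\norm{u_0}_{H^3}^2$, $(1+\norm{\nabla u_0}^2)^2\norm{u_0}_{H^2}^2+\norm{f_0}^2$ and $\int_0^{t_N}e^{2\alpha s}\norm{f_s}_{-1}^2\,ds$; the starting term is $e^{-2\alpha t_N}\norm{\theta^0}^2$, which is $0$ for $U^0=\widetilde u(0)$ and $\le Ch^2e^{-2\alpha t_N}\norm{\nabla u_0}^2$ for $U^0=P_hu_0$; and $e^{-2\alpha t_N}k\sum_{n=1}^N e^{2\alpha t_n}\le C$ exactly as in Theorem~\ref{thm5.4}.

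The main obstacle is the term $-\norm{\nabla\theta^{n-1}}^2\,\bar\partial_t\big(e^{2\alpha t_n}(1+\norm{\nabla U^n}^2)\big)$ left over by the discrete product rule: in contrast to the continuous case there is no exact $\tfrac{d}{dt}$, and $\bar\partial_t\norm{\nabla U^n}^2$ has no a priori sign. The resolution is to expand $\bar\partial_t\big(e^{2\alpha t_n}(1+\norm{\nabla U^n}^2)\big)=e^{2\alpha t_n}\bar\partial_t\norm{\nabla U^n}^2+\tfrac{e^{2\alpha k}-1}{k}e^{2\alpha t_{n-1}}(1+\norm{\nabla U^{n-1}}^2)$, bound $\bar\partial_t\norm{\nabla U^n}^2\le\norm{f^n}^2$ by \eqref{eqx4.1} of Remark~\ref{rm4.1} (the case where it is negative only helps, since then the whole term has the favourable sign and may be dropped), use the a priori bound on $\norm{\nabla U^{n-1}}$ from Remark~\ref{rm4.1} and $\tfrac{e^{2\alpha k}-1}{k}\le C$ for $k\le k_0$, so that after multiplying by $k$ and summing the term is dominated by $C\big(1+\norm{f}^2_{L^\infty(L^2)}\big)\,k\sum_{n=1}^N e^{2\alpha t_n}\norm{\nabla\theta^{n-1}}^2$; an index shift together with Theorem~\ref{thm5.4} (which bounds $k\,e^{-2\alpha t_N}\sum_{n=1}^N e^{2\alpha t_n}\norm{\nabla\theta^n}^2$) closes the estimate, with no discrete Gronwall inequality needed. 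Collecting all contributions and using $\norm{\nabla\widetilde u(t_n)}\le\norm{\nabla u(t_n)}$ yields the asserted bound.
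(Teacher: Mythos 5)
Your proposal is correct and follows essentially the same route as the paper's own proof: testing the error equation with an exponentially weighted discrete time derivative of $\theta^n$, splitting the monotone term via the Ritz projection so the quadratic part telescopes, controlling the non-telescoping leftover $\bar{\partial}_t\norm{\nabla U^{n}}^2\norm{\nabla\widehat\theta^{n-1}}^2$ through Remark \ref{rm4.1} and the already-established bound of Theorem \ref{thm5.4}, and reusing the consistency estimates \eqref{ex1}--\eqref{ex2}. The only differences (weighting by $e^{2\alpha t_n}$ and testing with $\bar{\partial}_t\theta^n$ rather than weighting by $e^{\alpha t_n}$ and testing with $\bar{\partial}_t\widehat\theta^n$, and handling the resulting cross term by $\norm{a-b}^2\ge\tfrac12\norm{a}^2-\norm{b}^2$ instead of Young's inequality) are cosmetic.
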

\begin{proof}
Multiply the equation \eqref{eq7.3} by $e^{\alpha t_n}$ and then putting $\varphi_h=\bar{\partial_t}\widehat\theta^n$, we obtain
\begin{align}
 (e^{\alpha t_n}\bar{\partial_t}\theta^n,\bar{\partial_t}\widehat\theta^n)+e^{\alpha t_n}&\Big((1+\norm{\nabla U^n}^2)\nabla U^n-(1+\norm{\nabla \widetilde u(t_n)}^2)\nabla \widetilde u(t_n),\nabla\bar{\partial_t}\widehat\theta^n\Big)\notag\\
 &=e^{\alpha t_n}(\bar{\partial_t}\eta^n,\bar{\partial_t}\widehat\theta^n)+e^{\alpha t_n}(u_t(t_n)-\bar{\partial_t}u(t_n),\bar{\partial_t}\widehat\theta^n)\notag\\
 &+e^{2\alpha t_n}\Big((1+\norm{\nabla u(t_n)}^2)\nabla u(t_n)-(1+\norm{\nabla\widetilde u(t_n)}^2)\nabla\widetilde u(t_n),\nabla\bar{\partial_t}\widehat\theta^n\Big).\label{eq9.2}
\end{align}
Using \eqref{eq7.5} in \eqref{eq9.2}, we find that
\begin{align}
 e^{\alpha k}\norm{\bar{\partial_t}\widehat\theta^n}^2+e^{\alpha t_n}&\Big((1+\norm{\nabla U^n}^2)\nabla U^n-(1+\norm{\nabla \widetilde u(t_n)}^2)\nabla \widetilde u(t_n),\nabla\bar{\partial_t}\widehat\theta^n\Big)\notag\\
 &=\frac{(e^{\alpha k}-1)}{k}(\widehat\theta^n,\bar{\partial_t}\widehat\theta^n)+e^{\alpha t_n}(\bar{\partial_t}\eta^n,\bar{\partial_t}\widehat\theta^n)+e^{\alpha t_n}(u_t(t_n)-\bar{\partial_t}u(t_n),\bar{\partial_t}\widehat\theta^n)\notag\\
 &+e^{2\alpha t_n}\Big((1+\norm{\nabla u(t_n)}^2)\nabla u(t_n)-(1+\norm{\nabla\widetilde u(t_n)}^2)\nabla\widetilde u(t_n),\nabla\bar{\partial_t}\widehat\theta^n\Big).\label{eq9.3}
\end{align}
 For the second term on the left hand side of \eqref{eq9.3}, use Ritz projection to rewrite it as
\begin{align}
 e^{\alpha t_n}\Big((1+\norm{\nabla U^n}^2)\nabla U^n-&(1+\norm{\nabla \widetilde u(t_n)}^2)\nabla \widetilde u(t_n),\nabla\bar{\partial_t}\widehat\theta^n\Big)\notag\\
 &=(\nabla\widehat\theta^n,\nabla\bar{\partial_t}\widehat\theta^n)+\norm{\nabla U^n}^2(\nabla\widehat\theta^n,\nabla\bar{\partial_t}\widehat\theta^n)\notag\\
 &-e^{\alpha t_n}\Big((\norm{\nabla U^n}^2-\norm{\nabla\widetilde u(t_n)}^2)\Delta u(t_n),\bar{\partial_t}\widehat\theta^n\Big).\label{eq9.4}
\end{align}
Note that
\begin{align}
 (\nabla\widehat\theta^n,\nabla\bar{\partial_t}\widehat\theta^n)=(\nabla\widehat\theta^n,\frac{\nabla\widehat\theta^n-\nabla\widehat\theta^{n-1}}{k})\geq\frac{1}{2}\bar{\partial_t}\norm{\nabla\widehat\theta^n}^2.\label{eq9.5}
\end{align}
The fourth term on the right hand side of \eqref{eq9.3}, can be bounded using Ritz projection as
\begin{align}
 e^{\alpha t_n}\Big((1+\norm{\nabla u(t_n)}^2)\nabla u(t_n)-&(1+\norm{\nabla\widetilde u(t_n)}^2)\nabla\widetilde u(t_n),\nabla\bar{\partial_t}\widehat\theta^n\Big)\notag\\
 &=e^{\alpha t_n}\Big(\norm{\nabla u(t_n)}^2\nabla u(t_n)-\norm{\nabla\widetilde u(t_n)}^2\nabla\widetilde u(t_n),\nabla\bar{\partial_t}\widehat\theta^n\Big)\notag\\
 &=e^{\alpha t_n}\Big((\norm{\nabla u(t_n)}^2-\norm{\nabla\widetilde u(t_n)}^2)\nabla\widetilde u(t_n),\nabla\bar{\partial_t}\widehat\theta^n\Big)\notag\\
 &=-e^{\alpha t_n}\Big((\norm{\nabla u(t_n)}^2-\norm{\nabla\widetilde u(t_n)}^2)\Delta u(t_n),\bar{\partial_t}\widehat\theta^n\Big)\notag\\
 &\leq Ce^{\alpha t_n}\norm{\nabla u(t_n)}\norm{\nabla\eta^n}\Delta u(t_n)\norm{\bar{\partial_t}\widehat\theta^n}.\label{eq9.6}
\end{align}
Therefore, using \eqref{eq9.4}-\eqref{eq9.6} in \eqref{eq9.3} and then multiplying the resulting inequality by $e^{-\alpha k}$, we obtain
\begin{align}
 \norm{\bar{\partial_t}\widehat\theta^n}^2+\frac{1}{2}e^{-\alpha k}(1+\norm{\nabla U^n}^2)\bar{\partial_t}\norm{\nabla\widehat\theta^n}^2&\leq\frac{(1-e^{-\alpha k})}{k}\norm{\widehat\theta^n}\norm{\bar{\partial_t}\widehat\theta^n}+e^{-\alpha k}e^{\alpha t_n}\norm{\bar{\partial_t}\eta^n}\norm{\bar{\partial_t}\widehat\theta^n}\notag\\
 &+e^{-\alpha k}e^{\alpha t_n}\norm{u_t(t_n)-\bar{\partial_t}u(t_n)}\norm{\bar{\partial_t}\widehat\theta^n}\notag\\
 &+Ce^{-\alpha k}e^{\alpha t_n}\norm{\nabla u(t_n)}\norm{\Delta u(t_n)}\norm{\nabla\eta^n}\norm{\bar{\partial_t}\widehat\theta^n}\notag\\
 &+e^{-\alpha k}e^{\alpha t_n}(\norm{\nabla U^n}+\norm{\nabla u(t_n)})\norm{\nabla\theta^n}\norm{\Delta u(t_n)}\norm{\bar{\partial_t}\widehat\theta^n}.\label{eq9.7}
\end{align}
Apply Young's inequality $ab\leq\frac{\epsilon}{2}a^2+\frac{1}{2\epsilon}b^2$ with $\epsilon=5$ and also $\frac{(1-e^{-\alpha k})}{k}=\alpha e^{-\alpha k^*}$ for some $k^*\in(0,k)$ and then multiply the resulting inequality by $2$ to obtain
\begin{align}
 \norm{\bar{\partial_t}\widehat\theta^n}^2+e^{-\alpha k}(1+\norm{\nabla U^n}^2)\bar{\partial_t}\norm{\nabla\widehat\theta^n}^2&\leq C(\alpha)\Big(e^{-2\alpha k^*}\norm{\widehat\theta^n}^2+e^{-2\alpha k}e^{2\alpha t_n}\norm{\bar{\partial_t}\eta^n}^2\notag\\
 &+e^{-2\alpha k}e^{2\alpha t_n}\norm{u_t(t_n)-\bar{\partial_t}u(t_n)}^2\notag\\
 &+e^{-2\alpha k}e^{2\alpha t_n}\norm{\nabla u(t_n)}^2\norm{\Delta u(t_n)}^2\norm{\nabla\eta^n}^2\notag\\
 &+e^{-2\alpha k}e^{2\alpha t_n}(\norm{\nabla U^n}^2+\norm{\nabla u(t_n)}^2)\norm{\nabla\theta^n}^2\norm{\Delta u(t_n)}^2\Big).\label{eq9.8}
\end{align}
On multiplying \eqref{eq9.8} by $ke^{-2\alpha t_N}$ and summing over $n=1$ to $N$, using \eqref{ex1}, \eqref{ex2} and the estimate $\norm{\nabla \eta}$ we arrive at
\begin{align}
 ke^{-2\alpha t_N}\sum_{n=1}^{N}&\norm{\bar{\partial_t}\widehat\theta^n}^2+e^{-\alpha k}(1+\norm{\nabla U^N}^2)\norm{\nabla\theta^N}^2\notag\\
 &\leq e^{-\alpha k}e^{-2\alpha t_N}(1+\norm{\nabla U^1}^2)\norm{\nabla \theta^0}^2+e^{-\alpha k}e^{-2\alpha t_N}k\sum_{n=1}^{N-1}\bar{\partial}_t\norm{\nabla U^{n+1}}^2\norm{\nabla\widehat\theta^n}^2\notag\\
 &+C(\alpha)e^{-2\alpha t_N}k\sum_{n=1}^{N}\norm{\widehat\theta^n}^2+C(\alpha)h^2e^{-2\alpha t_N}\int_{0}^{t_N}e^{2\alpha s}\norm{\nabla u_t(s)}^2 ds\notag\\
 &+C(\alpha)e^{-2\alpha t_N}k^2\int_{0}^{t_N}e^{2\alpha s}\norm{u_{tt}(s)}^2 ds\notag\\
 &+e^{-2\alpha k}e^{-2\alpha t_N}h^2k\sum_{n=1}^{N}e^{2\alpha t_n}\norm{\nabla u(t_n)}^2\norm{\Delta u(t_n)}^4\notag\\
 &+e^{-2\alpha k}e^{-2\alpha t_N}k\sum_{n=1}^{N}e^{2\alpha t_n}(\norm{U^n}^2+\norm{u(t_n)}^2)\norm{\Delta u(t_n)}^2\norm{\nabla\theta^n}^2\label{eq9.9}
\end{align}
From remark \ref{rm4.1}, Lemma \ref{4.1}, $e^{-2\alpha t_N}\Big(k\sum_{n=1}^{N}e^{2\alpha t_n}\Big)\leq C$ 
and the previous theorem \ref{thm5.4}, the first term, second term, third term and last term on the right hand side of equation \eqref{eq9.9} are bounded. From Lemmas \ref{2.2}, \ref{2.4}, \ref{2.5}, \ref{nl2} the other terms on the right hand side of equation \eqref{eq9.9} are bounded.
Therefore, we arrive at
\begin{align}
 ke^{-2\alpha t_N}\sum_{n=1}^{N}&\norm{\bar{\partial_t}\widehat\theta^n}^2+e^{-\alpha k}(1+\norm{\nabla U^N}^2)\norm{\nabla\theta^N}^2\notag\\
 &\leq C(\alpha,K)e^{-2\alpha t_N}(h^2+k^2)\Big(\norm{u_0}^2_{H^3}+\left((1+\norm{\nabla u_0}^2)^2\norm{u_0}^2_{H^2}+\norm{f_0}^2\right)\notag\\
&+\int_{0}^{t_N}e^{2\alpha s}\norm{f_s}^2_{-1} ds\Big).
\end{align}
This completes the rest of the proof.
\end{proof}
Since at each time level, we need to solve the system of nonlinear equation, below, we discuss modified backward Euler method which gives rise to a system of linear equations at each time step.
\subsection{ Modified Backward Euler Method}
For $n\geq 1$ and given $U^{n-1}$, the fully discrete linear scheme based on backward Euler method is to seek $U^n \in V_h$  as a solution of
\begin{align}
&(\bar{\partial}_tU^n,\varphi)+(1+\norm{\nabla U^{n-1}}^2)(\nabla U^n,\nabla \varphi)=(f^n,\varphi) \quad \forall \varphi \in V_h,\label{x1}\\
  &U^0=u_{0h}.\notag
 \end{align}
 At each time level using {\it{a priori}} bound of $U^n$, this system of linear equation has a unique solution. Now for the error analysis, split the error
 $e^n=u(t_n)-U^n=\big(u(t_n)-\widetilde u(t_n)\big)-\big(U^n-\widetilde u(t_n)\big)=\eta^n-\theta^n$, the equation in $\theta^n$ becomes
\begin{align}
 (\bar{\partial_t}\theta^n,\varphi_h)+\Big((1+\norm{\nabla U^{n-1}}^2)\nabla U^n-&(1+\norm{\nabla \widetilde u(t_{n-1})}^2)\nabla \widetilde u(t_n),\nabla\varphi_h\Big)\notag\\
 =&(\bar{\partial_t}\eta^n,\varphi_h)+(u_t(t_n)-\bar{\partial_t}u(t_n),\varphi_h)\notag\\
 &+\Big((1+\norm{\nabla u(t_n)}^2)\nabla u(t_n)-(1+\norm{\nabla\widetilde u(t_{n-1})}^2)\nabla\widetilde u(t_n),\nabla \varphi_h\Big)\label{x2}.
\end{align}
\begin{theorem}\label{thm5.7}
 Assume that $0<\alpha<\dfrac{\lambda_1}{2}$ and choose $k_0>0$ such that for $0<k\leq k_0$ \eqref{er1} is true.
 Then, there exists a positive constant $C=C(\lambda_1, K_1)$ independent of $h$ and $k$ such that
 \begin{align}
\norm{\theta^N}^2+&k\beta e^{-2\alpha t_N}\sum_{n=1}^{N}e^{2\alpha t_n}\norm{\nabla\theta^n}^2\notag\\ 
&\leq C(\lambda_1,K_1)e^{-2\alpha t_N}(k^2+h^2)\Big(\norm{u_0}^2_{H^3}+\left((1+\norm{\nabla u_0}^2)^2\norm{u_0}^2_{H^2}+\norm{f_0}^2\right)\notag\\
&+\int_{0}^{t_N}e^{2\alpha s}\norm{f_s}^2_{-1}ds\Big),
\end{align}
 where $\beta=\Big(e^{-\alpha k}-\dfrac{2}{k\lambda_1}(1-e^{-\alpha k})\Big)>0$.
\end{theorem}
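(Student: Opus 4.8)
The plan is to follow, step for step, the proof of Theorem~\ref{thm5.4}, isolating the one new contribution caused by evaluating the diffusion coefficient at $t_{n-1}$ rather than at $t_n$. As there, multiply the error equation \eqref{x2} by $e^{\alpha t_n}$, take $\varphi_h=e^{\alpha t_n}\theta^n=\widehat\theta^n$, use the algebraic identity $e^{\alpha t_n}\bar{\partial_t}\theta^n=e^{\alpha k}\bar{\partial_t}\widehat\theta^n-\tfrac{e^{\alpha k}-1}{k}\widehat\theta^n$ together with $(\bar{\partial_t}\widehat\theta^n,\widehat\theta^n)\ge\tfrac12\bar{\partial_t}\norm{\widehat\theta^n}^2$ and the Poincar\'e inequality $\norm{\widehat\theta^n}^2\le\lambda_1^{-1}\norm{\nabla\widehat\theta^n}^2$. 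The terms $e^{\alpha t_n}(\bar{\partial_t}\eta^n,\widehat\theta^n)$ and $e^{\alpha t_n}(u_t(t_n)-\bar{\partial_t}u(t_n),\widehat\theta^n)$ are bounded exactly as in \eqref{eq7.10}--\eqref{eq7.11}, and the truncation estimates \eqref{ex1}--\eqref{ex2} for $\norm{\bar{\partial_t}\eta^n}$ and $\norm{u_t(t_n)-\bar{\partial_t}u(t_n)}$ are unchanged.

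The new point is the principal term on the left of \eqref{x2}. Writing $\nabla U^n=\nabla\widetilde u(t_n)+\nabla\theta^n$, it splits, after multiplication by $e^{\alpha t_n}$, as
\begin{align*}
 &e^{\alpha t_n}\Big((1+\norm{\nabla U^{n-1}}^2)\nabla U^n-(1+\norm{\nabla\widetilde u(t_{n-1})}^2)\nabla\widetilde u(t_n),\nabla\widehat\theta^n\Big)\\
 &\qquad=(1+\norm{\nabla U^{n-1}}^2)\norm{\nabla\widehat\theta^n}^2+e^{\alpha t_n}(\norm{\nabla U^{n-1}}^2-\norm{\nabla\widetilde u(t_{n-1})}^2)(\nabla\widetilde u(t_n),\nabla\widehat\theta^n).
\end{align*}
Since \eqref{x1} is linear in $U^n$, the first piece is already coercive, $\ge\norm{\nabla\widehat\theta^n}^2$, so the monotonicity Lemma~\ref{2.6} is not needed here. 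For the lag term, testing \eqref{x1} with $U^n$ and with $-\Delta_hU^n$ (as in Lemma~\ref{4.1} and Remark~\ref{rm4.1}) gives $\norm{\nabla U^{n-1}}\le C$ uniformly in $n,h,k$, while $\norm{\nabla\widetilde u(t_{n-1})}\le\norm{\nabla u(t_{n-1})}\le C$; hence $\bigl|\norm{\nabla U^{n-1}}^2-\norm{\nabla\widetilde u(t_{n-1})}^2\bigr|\le C\norm{\nabla\theta^{n-1}}$, and the lag term is dominated by $Ce^{\alpha t_n}\norm{\nabla\theta^{n-1}}\norm{\nabla\widehat\theta^n}=Ce^{\alpha k}\norm{\nabla\widehat\theta^{n-1}}\norm{\nabla\widehat\theta^n}\le\varepsilon\norm{\nabla\widehat\theta^n}^2+C_\varepsilon\norm{\nabla\widehat\theta^{n-1}}^2$.

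On the right of \eqref{x2}, the nonlinear consistency term reduces, via the Ritz projection, to $e^{\alpha t_n}(\norm{\nabla u(t_n)}^2-\norm{\nabla\widetilde u(t_{n-1})}^2)(\nabla\widetilde u(t_n),\nabla\widehat\theta^n)$; splitting $\norm{\nabla u(t_n)}^2-\norm{\nabla\widetilde u(t_{n-1})}^2=\bigl(\norm{\nabla u(t_n)}^2-\norm{\nabla\widetilde u(t_n)}^2\bigr)+\bigl(\norm{\nabla\widetilde u(t_n)}^2-\norm{\nabla\widetilde u(t_{n-1})}^2\bigr)$ produces the $O(h)$ Ritz contribution, treated as in \eqref{eq7.12}, plus the extra term $\norm{\nabla\widetilde u(t_n)}^2-\norm{\nabla\widetilde u(t_{n-1})}^2=\int_{t_{n-1}}^{t_n}2(\nabla\widetilde u_s,\nabla\widetilde u)\,ds$, which is $O(k)$ and, after Young's inequality, squaring, and summation with the weights $ke^{2\alpha t_n}$, contributes $O(k^2)$ (bounded by Lemma~\ref{nl1} applied to $\int_0^{t_N}e^{2\alpha s}\norm{\nabla u_s}^2\,ds$). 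Now collect all estimates, multiply by $2e^{-\alpha k}$ and then by $k$, sum over $n=1,\dots,N$ and multiply by $e^{-2\alpha t_N}$, exactly as in \eqref{eq7.13}--\eqref{eq7.14}: the left side supplies $\norm{\theta^N}^2$ and $k\beta e^{-2\alpha t_N}\sum_{n=1}^Ne^{2\alpha t_n}\norm{\nabla\theta^n}^2$ with $\beta=e^{-\alpha k}-\tfrac{2}{k\lambda_1}(1-e^{-\alpha k})$, while the right side consists of the $O(h^2+k^2)$ data terms of Theorem~\ref{thm5.4} (controlled by Lemmas~\ref{2.2}, \ref{2.4}, \ref{nl1}, \ref{nl2} and by \eqref{ex1}--\eqref{ex2}), the starting term $\norm{\theta^0}^2+C_\varepsilon k\norm{\nabla\theta^0}^2$ (which is $0$ when $U^0=\widetilde u(0)$ and otherwise $O(h^2)$), and the feedback term $C_\varepsilon k\sum_{n=1}^N\norm{\nabla\widehat\theta^{n-1}}^2$.

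The main obstacle, and the only place where the smallness of $k$ is genuinely exploited beyond \eqref{er1}, is to dispose of this feedback term without invoking a discrete Gronwall inequality. One re-indexes, $\sum_{n=1}^N\norm{\nabla\widehat\theta^{n-1}}^2=\norm{\nabla\widehat\theta^0}^2+\sum_{n=1}^{N-1}\norm{\nabla\widehat\theta^n}^2\le\norm{\nabla\theta^0}^2+\sum_{n=1}^N\norm{\nabla\widehat\theta^n}^2$. Since $\beta=\beta(k)\to1-\tfrac{2\alpha}{\lambda_1}>0$ as $k\to0$, one may further shrink $k_0$ so that for $0<k\le k_0$ both \eqref{er1} holds and $C_\varepsilon k<\tfrac12\beta(k)$; then $C_\varepsilon k\sum_{n=1}^N\norm{\nabla\widehat\theta^n}^2$ is absorbed into $k\beta\sum_{n=1}^N\norm{\nabla\widehat\theta^n}^2$ on the left (after restoring the $e^{-2\alpha t_N}$ weight), and the small $\varepsilon\norm{\nabla\widehat\theta^n}^2$ pieces are absorbed into the coercive $\norm{\nabla\widehat\theta^n}^2$ exactly as the $\tfrac16$-terms in the proof of Theorem~\ref{thm5.4}. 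Undoing the exponential weight, $\norm{\theta^N}=e^{-\alpha t_N}\norm{\widehat\theta^N}$, and combining with $\norm{\nabla\eta^n}\le Ch\norm{u(t_n)}_{H^2}$ via the triangle inequality yields the asserted estimate.
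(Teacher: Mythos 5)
Your argument is correct in substance, but it handles the one genuinely new term --- the lagged coefficient $1+\norm{\nabla U^{n-1}}^2$ --- by a different route than the paper, and the difference has a small consequence for the hypothesis. The paper does not linearize around $\widetilde u(t_n)$ as you do; instead it adds and subtracts so as to recover the same-time-level monotone expression $\bigl((1+\norm{\nabla U^{n}}^2)\nabla U^n-(1+\norm{\nabla\widetilde u(t_{n})}^2)\nabla\widetilde u(t_n),\nabla\theta^n\bigr)$, to which Lemma~\ref{2.6} applies, and then treats the two leftover differences $\norm{\nabla U^{n-1}}^2-\norm{\nabla U^{n}}^2$ and $\norm{\nabla\widetilde u(t_n)}^2-\norm{\nabla\widetilde u(t_{n-1})}^2$ as $O(k)$ consistency errors, the first via the a priori bound \eqref{eqx4.1} on $\bar{\partial}_t\norm{\nabla U^n}^2$ from Remark~\ref{rm4.1} and the second via \eqref{ex5}. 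The payoff is that no term involving $\nabla\theta^{n-1}$ ever appears, so the only restriction on the time step is \eqref{er1}. Your version instead gets coercivity for free from the linearity of \eqref{x1} in $\nabla U^n$ (correctly observing that Lemma~\ref{2.6} is not needed there) but pays for it with the feedback term $C_\varepsilon k\sum_n\norm{\nabla\widehat\theta^{n-1}}^2$, which you absorb by imposing the additional condition $C_\varepsilon k<\tfrac12\beta(k)$. That absorption is legitimate and does avoid a discrete Gronwall argument, but note that it establishes the estimate only under a possibly smaller, data-dependent threshold $k_0$ than the one fixed by \eqref{er1} alone, which is what the statement asserts; the paper's decomposition is precisely what removes this extra constraint. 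Everything else in your proof --- the identity $e^{\alpha t_n}\bar{\partial}_t\theta^n=e^{\alpha k}\bar{\partial}_t\widehat\theta^n-\tfrac{e^{\alpha k}-1}{k}\widehat\theta^n$, the truncation and Ritz-projection estimates, and the splitting of the consistency term into an $O(h)$ piece plus the $O(k)$ piece $\norm{\nabla\widetilde u(t_n)}^2-\norm{\nabla\widetilde u(t_{n-1})}^2$ --- matches the paper's treatment as in Theorem~\ref{thm5.4}.
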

\begin{proof}
 Multiplying \eqref{x2} by $e^{\alpha t_n}$ and putting $\varphi_h=e^{\alpha t_n}\theta^n=\widehat\theta^n$, we obtain
 \begin{align}
 (e^{\alpha t_n}\bar{\partial_t}\theta^n,\widehat\theta^n)+e^{\alpha t_n}&\Big((1+\norm{\nabla U^{n-1}}^2)\nabla U^n-(1+\norm{\nabla\widetilde u(t_{n-1})}^2)\nabla\widetilde u(t_n),\nabla\widehat\theta^n\Big)\notag\\
 &=e^{\alpha t_n}(\bar{\partial_t}\eta^n,\widehat\theta^n)+e^{\alpha t_n}(u_t(t_n)-\bar{\partial_t}u(t_n),\widehat\theta^n)\notag\\
 &+e^{\alpha t_n}\Big((1+\norm{\nabla u(t_n)}^2)\nabla u(t_n)-(1+\norm{\nabla\widetilde u(t_{n-1})}^2)\nabla \widetilde u(t_n),\nabla\widehat\theta^n\Big)\label{x4}.
\end{align}
The second term on the left hand side can be written as
\begin{align}
 e^{2\alpha t_n}\Big((1+\norm{\nabla U^{n-1}}^2)&\nabla U^n-(1+\norm{\nabla\widetilde u(t_{n-1})}^2)\nabla\widetilde u(t_n),\nabla\theta^n\Big)\notag\\
 &=e^{2\alpha t_n}\Big((1+\norm{\nabla U^{n}}^2)\nabla U^n-(1+\norm{\nabla\widetilde u(t_{n})}^2)\nabla\widetilde u(t_n),\nabla\theta^n\Big)\notag\\
 &+e^{\alpha t_n}\Big((\norm{\nabla U^{n-1}}^2-\norm{\nabla U^n}^2)\nabla U^n,\nabla\widehat\theta^n\Big)\notag\\
 &+e^{\alpha t_n}\Big((\norm{\nabla\widetilde u(t_n)}^2-\norm{\nabla\widetilde u(t_{n-1})}^2)\nabla\widetilde u(t_n),\nabla\widehat\theta^n\Big)\notag\\
 &\geq\norm{\nabla\widehat\theta^n}^2-Ce^{2\alpha t_n}k^2(\bar{\partial_t}\norm{\nabla U^n}^2)^2\norm{\nabla U^n}^2-\frac{1}{12}\norm{\nabla\widehat\theta^n}^2\notag\\
 &-e^{\alpha t_n}\Big((\norm{\nabla\widetilde u(t_n)}^2-\norm{\nabla\widetilde u(t_{n-1})}^2)\nabla\widetilde u(t_n),-\nabla\widetilde\theta^n\Big).\label{x5}
\end{align}
Note that
\begin{align}
 \norm{\nabla\widetilde u(t_n)-\nabla\widetilde u(t_{n-1})}^2&=\norm{\int_{t_{n-1}}^{t_n}(\nabla\widetilde u(t))_t dt}^2\notag\\
 &\leq C(k\int_{t_{n-1}}^{t_n}\norm{\nabla u_t}^2 dt+h^2k\int_{t_{n-1}}^{t_n}\norm{\Delta u_t}^2 dt)\label{ex5}.
\end{align}
The third term on the right hand side is bounded by
\begin{align}
e^{\alpha t_n}\Big((1+\norm{\nabla u(t_n)}^2)\nabla u(t_n)-&(1+\norm{\nabla\widetilde u(t_{n-1})}^2)\nabla \widetilde u(t_n),\nabla\widehat\theta^n\Big)\notag\\
&=e^{\alpha t_n}\Big((\norm{\nabla u(t_n)}^2-\norm{\nabla\widetilde u(t_{n})}^2)\nabla u(t_n),\nabla\widehat\theta^n\Big)\notag\\
&+e^{\alpha t_n}\Big((\norm{\nabla\widetilde u(t_n)}^2-\norm{\nabla\widetilde u(t_{n-1})}^2)\nabla u(t_n),\nabla\widehat\theta^n\Big)\notag\\
&\leq Ce^{2\alpha t_n}h^2\norm{\nabla u(t_n)}^4\norm{\Delta u}^2+\frac{1}{6}\norm{\nabla\widehat\theta^n}^2\notag\\
&+\Big(Ce^{2\alpha t_n}(\norm{\nabla u(t_n)}^2+\norm{\nabla u(t_{n-1})}^2)\norm{\nabla u(t_n)}^2&\notag\\
&\Big(k\int_{t_{n-1}}^{t_n}\norm{\nabla u_t}^2 dt+h^2k\int_{t_{n-1}}^{t_n}\norm{\Delta u_t}^2 dt\Big)\Big).\label{x6}
\end{align}
Now, continuing as before in Theorm \ref{thm5.4} we arrive at
 \begin{align}
 \bar{\partial_t}\norm{\widehat\theta^n}^2+\Big(e^{-\alpha k}-\dfrac{2}{k\lambda_1}(1-e^{-\alpha k})\Big)\norm{\nabla\widehat\theta^n}^2&\leq\dfrac{3}{2\lambda_1}e^{-\alpha k}e^{2\alpha t_n}(\norm{\bar{\partial_t}\eta^n}^2+\norm{u_t(t_n)-\bar{\partial_t}u(t_n)}^2)\notag\\
 &+Ce^{2\alpha t_n}k^2(\bar{\partial_t}\norm{\nabla U^n}^2)^2\norm{\nabla U^n}^2+Ce^{2\alpha t_n}h^2\norm{\nabla u(t_n)}^4\norm{\Delta u}^2\notag\\
 &+\frac{1}{6}\norm{\nabla\widehat\theta^n}^2+\Big(Ce^{2\alpha t_n}(\norm{\nabla u(t_n)}^2+\norm{\nabla u(t_{n-1})}^2)\norm{\nabla u(t_n)}^2\notag\\
 &\Big(k\int_{t_{n-1}}^{t_n}\norm{\nabla u_t}^2 dt+h^2k\int_{t_{n-1}}^{t_n}\norm{\Delta u_t}^2 dt\Big)\Big).\label{x7}
\end{align}
Mutiply \eqref{x7} by $k$, and summing over $n=1$ to $N$, we arrive at
\begin{align}
 \norm{\widehat\theta^N}^2+k\beta\sum_{n=1}^{N}\norm{\nabla\widehat\theta^n}^2&\leq\norm{\theta^0}^2+C(\lambda_1)(h^2+k^2)\int_{0}^{t_N}e^{2\alpha s}\norm{\nabla u_t(s)}^2 ds
 +C(\lambda_1)k^2\int_{0}^{t_N}e^{2\alpha s}\norm{u_{tt}(s)}^2 ds\notag\\
 &+C(\lambda_1)h^2k^2\int_{0}^{t_N}e^{2\alpha s}\norm{\Delta u_{t}(s)}^2 ds+C(K_1)k^2e^{-\alpha k}\norm{f^n}^2_{L^\infty(L^2)}\Big(k\sum_{n=1}^{N}e^{2\alpha t_n}\Big).\label{x8}
\end{align}
Proceed as in theorm \ref{thm5.4} to complete the rest of the proof.
\end{proof}
\begin{theorem}\label{thm5.8}
Assume that $0<\alpha<\dfrac{\lambda_1}{2}$ and choose $k_0>0$ be such that for $0<k\leq k_0$, \eqref{er1} is true.
Then, there exists a positive constant $C=C(\alpha, K)$ holds
 \begin{align}
 ke^{-2\alpha t_N}\sum_{n=1}^{N}&\norm{\bar{\partial_t}\widehat\theta^n}^2+e^{-\alpha k}(1+\norm{\nabla U^{N-1}}^2)\norm{\nabla\theta^N}^2\notag\\
 &\leq C(\alpha,K_1)e^{-2\alpha t_N}(h^2+k^2)\Big(\norm{u_0}^2_{H^3}+\left((1+\norm{\nabla u_0}^2)^2\norm{u_0}^2_{H^2}+\norm{f_0}^2\right)\notag\\
&+\int_{0}^{t_N}e^{2\alpha s}\norm{f_s}^2_{-1}ds\Big),
\end{align}
where $\beta=\Big(e^{-\alpha k}-\dfrac{2}{k\lambda_1}(1-e^{-\alpha k})\Big)>0$.
\end{theorem}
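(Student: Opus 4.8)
The plan is to reproduce, for the linearised scheme \eqref{x1}, the argument used in Theorem~\ref{thm5.6}, the only new ingredient being the defect generated by freezing the Kirchhoff coefficient at $t_{n-1}$, which will be absorbed exactly as in Theorem~\ref{thm5.7}. First I would multiply the error equation \eqref{x2} by $e^{\alpha t_n}$ and test with $\varphi_h=\bar{\partial_t}\widehat\theta^n$, where $\widehat\theta^n=e^{\alpha t_n}\theta^n\in V_h$. Using \eqref{eq7.5} the time-difference term becomes $e^{\alpha k}\norm{\bar{\partial_t}\widehat\theta^n}^2-\tfrac{e^{\alpha k}-1}{k}(\widehat\theta^n,\bar{\partial_t}\widehat\theta^n)$, the lower-order piece being moved to the right and later absorbed through Young's and Poincare's inequalities and \eqref{eq7.9}, just as in \eqref{eq9.7}.

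The left-hand nonlinear term of \eqref{x2} I would split as $A_n+B_n+C_n$ with $A_n=(1+\norm{\nabla U^n}^2)\nabla U^n-(1+\norm{\nabla\widetilde u(t_n)}^2)\nabla\widetilde u(t_n)$, $B_n=(\norm{\nabla U^{n-1}}^2-\norm{\nabla U^n}^2)\nabla U^n$, $C_n=(\norm{\nabla\widetilde u(t_n)}^2-\norm{\nabla\widetilde u(t_{n-1})}^2)\nabla\widetilde u(t_n)$. The $A_n$-contribution is handled precisely as in \eqref{eq9.4}--\eqref{eq9.5}: since $\nabla U^n-\nabla\widetilde u(t_n)=\nabla\theta^n$ and $\widetilde u(t_n)$ is the Ritz projection, $e^{\alpha t_n}(A_n,\nabla\bar{\partial_t}\widehat\theta^n)=(1+\norm{\nabla U^n}^2)(\nabla\widehat\theta^n,\nabla\bar{\partial_t}\widehat\theta^n)-e^{\alpha t_n}(\norm{\nabla U^n}^2-\norm{\nabla\widetilde u(t_n)}^2)(\Delta u(t_n),\bar{\partial_t}\widehat\theta^n)$, and by \eqref{eq9.5} the first piece dominates $\tfrac{1}{2}(1+\norm{\nabla U^n}^2)\bar{\partial_t}\norm{\nabla\widehat\theta^n}^2$. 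For $B_n$ I write $\norm{\nabla U^{n-1}}^2-\norm{\nabla U^n}^2=-k\,\bar{\partial_t}\norm{\nabla U^n}^2$, integrate by parts via the discrete Laplacian \eqref{temp6}, and use Young's inequality so that $e^{\alpha t_n}(B_n,\nabla\bar{\partial_t}\widehat\theta^n)\le\tfrac{1}{12}\norm{\bar{\partial_t}\widehat\theta^n}^2+Ce^{2\alpha t_n}k^2(\bar{\partial_t}\norm{\nabla U^n}^2)^2\norm{\Delta_h U^n}^2$; the $C_n$-contribution is controlled through \eqref{ex5}. The three right-hand terms of \eqref{x2} are treated exactly as in Theorem~\ref{thm5.6}: $\bar{\partial_t}\eta^n$ via \eqref{ex1} gives $O(h^2)\!\int e^{2\alpha s}\norm{\nabla u_t}^2ds$, the truncation error $u_t(t_n)-\bar{\partial_t}u(t_n)$ via \eqref{ex2} gives $O(k^2)\!\int e^{2\alpha s}\norm{u_{tt}}^2ds$, and the nonlinear consistency term, after the time-lag splitting already carried out in \eqref{x6} and the Ritz step \eqref{eq9.6}, gives an $O(h^2)$ term together with $O(k^2+h^2k^2)\!\int e^{2\alpha s}(\norm{\nabla u_t}^2+\norm{\Delta u_t}^2)ds$, all of the required form by Lemmas~\ref{2.2}, \ref{2.4}, \ref{2.5}, \ref{nl1}, \ref{nl2}.

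Assuming \eqref{er1}, so that $\beta=\bigl(e^{-\alpha k}-\tfrac{2}{k\lambda_1}(1-e^{-\alpha k})\bigr)>0$, I would then multiply the per-step inequality by $ke^{-2\alpha t_N}$ and sum over $n=1,\dots,N$. A discrete summation by parts on $\tfrac{1}{2}e^{-\alpha k}k\sum_n(1+\norm{\nabla U^n}^2)\bar{\partial_t}\norm{\nabla\widehat\theta^n}^2$ produces (up to a constant) the left-hand quantity $e^{-\alpha k}(1+\norm{\nabla U^N}^2)\norm{\nabla\widehat\theta^N}^2$ plus a cross term of the shape $k\sum_n(\bar{\partial_t}\norm{\nabla U^{n}}^2)\norm{\nabla\widehat\theta^{n-1}}^2$. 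Writing $\norm{\nabla\widehat\theta^{n-1}}^2=e^{2\alpha t_{n-1}}\norm{\nabla\theta^{n-1}}^2$, this cross term and the $B_n$-term $ke^{-2\alpha t_N}\sum_n e^{2\alpha t_n}k^2(\bar{\partial_t}\norm{\nabla U^n}^2)^2\norm{\Delta_h U^n}^2$ are absorbed by invoking (i) $\norm{\nabla U^n}^2\le K_1$ and a uniform (in $n$) bound for $\norm{\Delta_h U^n}$ — a discrete counterpart of Lemmas~\ref{2.3}--\ref{2.5}, obtained by testing the time-differenced version of \eqref{eq5.1} with $\bar{\partial_t}U^n$ and \eqref{eq5.1} with $-\Delta_h U^n$; (ii) a weighted form of Remark~\ref{rm4.1} giving $ke^{-2\alpha t_N}\sum_n e^{2\alpha t_n}\norm{\bar{\partial_t}\nabla U^n}^2\le C$; and (iii) the already-established bound $ke^{-2\alpha t_N}\sum_n e^{2\alpha t_n}\norm{\nabla\theta^n}^2\le C(k^2+h^2)(\cdots)$ of Theorem~\ref{thm5.7} — so that, as advertised, no discrete Gronwall inequality is needed. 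Using $e^{-2\alpha t_N}k\sum_n e^{2\alpha t_n}\le C$ and $\norm{\theta^0}\le Ch\norm{\nabla u_0}$ (or $\theta^0=0$ for $U^0=\widetilde u(0)$), the remaining terms combine into the asserted estimate.

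The hardest part will be steps (i)--(iii): namely, showing that the linearisation defect $B_n$ — which, pointwise, is only controlled through $\bar{\partial_t}\norm{\nabla U^n}^2=O(1/k)$ — nevertheless sums to a genuine $O(k^2)$ contribution. This hinges on having a uniform-in-$n$ a priori bound for $\norm{\Delta_h U^n}$ and the weighted $\ell^2$-in-time bound for $\norm{\bar{\partial_t}\nabla U^n}$; once these discrete regularity estimates are in place, the remaining bookkeeping is routine and parallels Theorems~\ref{thm5.4}, \ref{thm5.6} and \ref{thm5.7}.
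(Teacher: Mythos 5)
Your overall template---multiply \eqref{x2} by $e^{\alpha t_n}$, test with $\bar{\partial_t}\widehat\theta^n$, sum, and use Theorem~\ref{thm5.7} for the accumulated $\norm{\nabla\theta}$ terms---is the same as the paper's, but you genuinely diverge in how you treat the lagged Kirchhoff coefficient on the left-hand side, and the divergence matters. You peel off $B_n=(\norm{\nabla U^{n-1}}^2-\norm{\nabla U^n}^2)\nabla U^n$ as a separate defect so as to work with the coefficient at time $t_n$; since the only quantity you can absorb after testing with $\bar{\partial_t}\widehat\theta^n$ is $\norm{\bar{\partial_t}\widehat\theta^n}^2$, you are then forced to pass $B_n$ through the discrete Laplacian and to invoke two discrete regularity estimates that the paper never proves (a uniform-in-$n$ bound on $\norm{\Delta_h U^n}$ and a weighted $\ell^2$-in-time bound on $\norm{\bar{\partial_t}\nabla U^n}$); you correctly flag these as the hardest part, but as written they remain assertions, and note that Remark~\ref{rm4.1} only gives a one-sided bound on $\bar{\partial_t}\norm{\nabla U^n}^2$ and a summed (not pointwise) bound on $\norm{\Delta_h U^n}$, so genuine extra lemmas (testing the time-differenced scheme, possibly with a $\tau$-weight near $t=0$) would be needed. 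The paper sidesteps all of this: in \eqref{y2} it keeps the lagged coefficient attached to the error, writing the nonlinear term as $(1+\norm{\nabla U^{n-1}}^2)(\nabla\widehat\theta^n,\nabla\bar{\partial_t}\widehat\theta^n)$ minus $e^{\alpha t_n}\bigl((\norm{\nabla U^{n-1}}^2-\norm{\nabla\widetilde u(t_{n-1})}^2)\Delta u(t_n),\bar{\partial_t}\widehat\theta^n\bigr)$, so the leading part directly yields $(1+\norm{\nabla U^{n-1}}^2)\bar{\partial_t}\norm{\nabla\widehat\theta^n}^2$---which is precisely why the theorem's left-hand side carries $(1+\norm{\nabla U^{N-1}}^2)$ rather than $(1+\norm{\nabla U^{N}}^2)$---and the only new right-hand contribution is of size $\norm{\nabla\theta^{n-1}}\norm{\Delta u(t_n)}\norm{\bar{\partial_t}\widehat\theta^n}$, absorbed via Theorem~\ref{thm5.7} exactly as the $\norm{\nabla\theta^n}$ term was in Theorem~\ref{thm5.6}. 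The Ritz-projection time-lag (your $C_n$, via \eqref{ex5}), the consistency terms \eqref{ex1}--\eqref{ex2}, and the summation-by-parts cross term are handled identically in both arguments; the paper's decomposition simply buys freedom from your steps (i)--(ii), which would have to be written out in full before your version of the proof is complete.
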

\begin{proof}
Multiply the equation \eqref{x2} by $e^{\alpha t_n}$ and then putting $\varphi_h=\bar{\partial_t}\widehat\theta^n$, we obtain
\begin{align}
  e^{\alpha k}\norm{\bar{\partial_t}\widehat\theta^n}^2+e^{\alpha t_n}&\Big((1+\norm{\nabla U^{n-1}}^2)\nabla U^n-(1+\norm{\nabla \widetilde u(t_{n-1})}^2)\nabla \widetilde u(t_n),\nabla\bar{\partial_t}\widehat\theta^n\Big)\notag\\
 &=\frac{(e^{\alpha k}-1)}{k}(\widehat\theta^n,\bar{\partial_t}\widehat\theta^n)+e^{\alpha t_n}(\bar{\partial_t}\eta^n,\bar{\partial_t}\widehat\theta^n)+e^{\alpha t_n}(u_t(t_n)-\bar{\partial_t}u(t_n),\bar{\partial_t}\widehat\theta^n)\notag\\
 &+e^{2\alpha t_n}\Big((1+\norm{\nabla u(t_n)}^2)\nabla u(t_n)-(1+\norm{\nabla\widetilde u(t_{n-1})}^2)\nabla\widetilde u(t_n),\nabla\bar{\partial_t}\widehat\theta^n\Big).\label{y1}
\end{align}
The second term on the left hand side of \eqref{y1}, can be bounde by
\begin{align}
 e^{\alpha t_n}\Big((1+\norm{\nabla U^{n-1}}^2)\nabla U^n-&(1+\norm{\nabla \widetilde u(t_{n-1})}^2)\nabla \widetilde u(t_n),\nabla\bar{\partial_t}\widehat\theta^n\Big)\notag\\
 &=(1+\norm{\nabla U^{n-1}}^2)(\nabla\widehat\theta^n,\nabla\bar{\partial_t}\widehat\theta^n)\notag\\
 &-e^{\alpha t_n}\Big((\norm{\nabla U^{n-1}}^2-\norm{\nabla\widetilde u(t_{n-1})}^2)\Delta u(t_n),\bar{\partial_t}\widehat\theta^n\Big).\label{y2}
\end{align}
The fourth term on the right hand side is bounded by
\begin{align}
 e^{\alpha t_n}\Big((1+\norm{\nabla u(t_n)}^2)\nabla u(t_n)-&(1+\norm{\nabla\widetilde u(t_n)}^2)\nabla\widetilde u(t_n),\nabla\bar{\partial_t}\widehat\theta^n\Big)\notag\\
 &=-e^{\alpha t_n}\Big((\norm{\nabla u(t_n)}^2-\norm{\nabla\widetilde u(t_{n})}^2)\Delta u(t_n),\bar{\partial_t}\widehat\theta^n\Big)\notag\\
 &-e^{\alpha t_n}\Big((\norm{\nabla\widetilde u(t_n)}^2-\norm{\nabla\widetilde u(t_{n-1})}^2)\Delta u(t_n),\bar{\partial_t}\widehat\theta^n\Big)\notag\\
 &\leq Che^{\alpha t_n}\norm{\nabla u(t_n)}\norm{\Delta u(t_n)}^2\norm{\bar{\partial_t}\widehat\theta^n}\notag\\
 &+e^{\alpha t_n}(\norm{\nabla u(t_n)}+\norm{\nabla u(t_{n-1})})(\norm{\nabla\widetilde u(t_n)-\nabla\widetilde u(t_{n-1})})\norm{\Delta u(t_n)}\norm{\bar{\partial_t}\widehat\theta^n}.\label{y3}
\end{align}
Therefore proceeding as in theorem \ref{thm5.6} we obtain
\begin{align}
 \norm{\bar{\partial_t}\widehat\theta^n}^2+e^{-\alpha k}(1+\norm{\nabla U^{n-1}}^2)\bar{\partial_t}\norm{\nabla\widehat\theta^n}^2&\leq C(\alpha)\Big(e^{-2\alpha k^*}\norm{\widehat\theta^n}^2+e^{-2\alpha k}e^{2\alpha t_n}\norm{\bar{\partial_t}\eta^n}^2\notag\\
 &+e^{-2\alpha k}e^{2\alpha t_n}\norm{u_t(t_n)-\bar{\partial_t}u(t_n)}^2\notag\\
 &+e^{-2\alpha k}(\norm{\nabla U^{n-1}}^2+\norm{\nabla u(t_{n-1})}^2)\norm{\Delta u(t_n)}^2\norm{\nabla\theta^{n-1}}^2\notag\\
 &+h^2e^{-2\alpha k}e^{2\alpha t_n}\norm{\nabla u(t_n)}^2\norm{\Delta u(t_n)}^4\notag\\
 &+\Big(e^{-2\alpha k}e^{2\alpha t_n}(\norm{\nabla u(t_n)}^2+\norm{\nabla u(t_{n-1})}^2)\notag\\
 &(\norm{\nabla\widetilde u(t_n)-\nabla\widetilde u(t_{n-1})})\norm{\Delta u(t_n)}\Big)\Big).\label{y4}
\end{align}
The rest of the proof is same as in Theorem \ref{thm5.6} which also uses the estimate in Theorem \ref{thm5.7}. This completes the rest of the proof.
\end{proof}
\section{Numerical Experiment}
In this section, we discuss fully discrete finite element formulation of \eqref{eq1}-\eqref{eq3} using modified backward Euler method. 
Now time variable is discritized by replacing the time derivative by difference quotient.
Let $k$ be the time step and $U^n$ be the approximation of $u(t)$ in $V_h$ at $t=t_n=nk.$ 
We now apply  modified backward Euler approximation to  \eqref{eq4}. 
\begin{example}
Here, we choose the right hand side function $f$ in such a way so that  the exact solution is $u=x(1-x)y(1-y)e^{-t}$ in $\Omega=(0, 1)\times(0, 1)$ and time $t=[0, 1]$, which satisfy the Dirichlet boundary condition.
\\In Table \ref{table:1}, the convergence rates are given for $t=1$. Observe that $\norm{\nabla u^n-\nabla U^n}$ is of order one as predicted by the theory. It is also observed numerically that the convergence rate for
$L^2$- error is of order $2$, but we still do not have a theory to back this claim.

\begin{table}[ht]
\centering
 \caption{Errors and convergence rate for modified backward Euler method}
 \begin{tabular}{c c c c c}
  \hline
  $h$ & $\norm{u(t_n)-U^n}_{L^2}$ & Conv. Rate & $\norm{u(t_n)-U^n}_{H^1}$ & Conv. Rate
 \\  \hline \hline
  $\frac{1}{2}$ & 0.002493 & & 0.010547 & \\ \hline
  $\frac{1}{4}$ & 0.000715 & 1.801077 & 0.006027 & 0.807341 \\ \hline
  $\frac{1}{8}$ & 0.000287 & 1.931655 & 0.003242 & 0.931121 \\ \hline
  $\frac{1}{16}$ & 0.000048 & 1.956815 & 0.001605 & 0.977676 \\ \hline
  $\frac{1}{32}$ & 0.000012 & 1.904400 & 0.000806 & 0.993114 \\
  \hline 
  \label{table:1}
 \end{tabular}
\end{table}

\begin{figure}
 \centering
 \includegraphics[height=7cm]{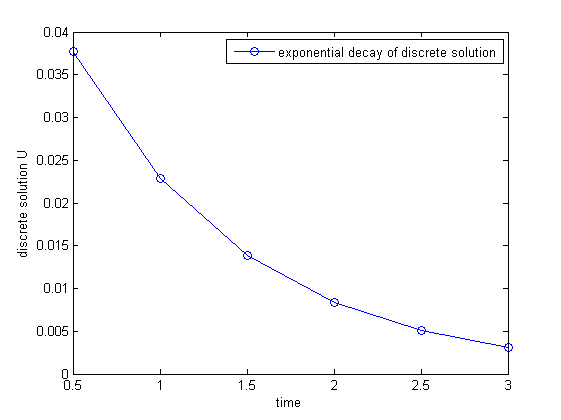}
 \caption{exponential decay of solution }
  \label{fig:1}
 \end{figure}

Since $f=O(e^{- t})$, it is further observed in Fig \ref{fig:1} that discrete solution $\norm{U}_{L^\infty(H^1)}$ decays exponentially as predicted by the theory.
\end{example}
\begin{example}
Here, we choose the right hand side function $f$ in such a way so that  the exact solution is $u=tsin(\pi x)sin(\pi y)$ in $\Omega=(0, 1)\times(0, 1)$ and time $t=[0, 1]$, which satisfy the Dirichlet boundary condition. 
\\In Table \ref{table:2}, the convergence rates are given for $t=1$. Observe that $\norm{\nabla u^n-\nabla U^n}$ is of order one as predicted by the theory.
\begin{table}[ht]
\centering
 \caption{Errors and convergence rate for modified backward Euler method}
 \begin{tabular}{c c c c c}
  \hline
  $h$ & $\norm{u(t_n)-U^n}_{L^2}$ & Conv. Rate & $\norm{u(t_n)-U^n}_{H^1}$ & Conv. Rate
 \\  \hline \hline
  $\frac{1}{2}$ & 0.091805 & & 0.470744 & \\ \hline
  $\frac{1}{4}$ & 0.024631 & 1.898054 & 0.268189 & 0.811695 \\ \hline
  $\frac{1}{8}$ & 0.006364 & 1.952374 & 0.141461 & 0.922836 \\ \hline
  $\frac{1}{16}$ & 0.001576 & 2.013456 & 0.072073 & 0.972865 \\ \hline
  $\frac{1}{32}$ & 0.000358 & 2.135151 & 0.036255 & 0.991270 \\
  \hline 
  \label{table:2}
 \end{tabular}
\end{table}
\end{example}

\begin{example}
 Now in this example we have taken right hand side $f=0$. We do not know the exact form of exact solution. We have chosen $u_0$(exact solution at $t=0$) as $u_0=x(1-x)y(1-y)sin(x+y)$.

 \begin{figure}
 \centering
 \includegraphics[height=7cm]{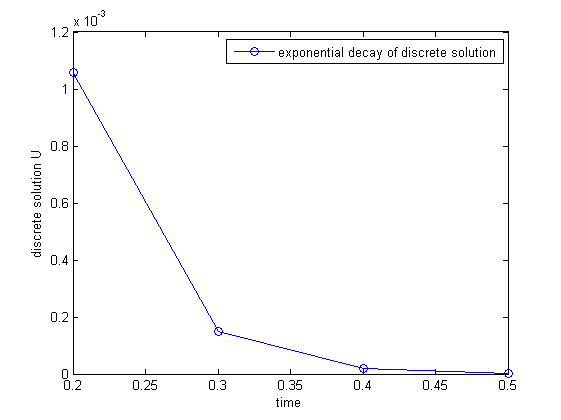}
 \caption{Exponential decay of solution when f=0}
  \label{fig:2}
 \end{figure}

Here we have observed in Fig \ref{fig:2} that discrete solution decays exponentially as time increase as predicted by the theory when $f=0$.
\end{example}
% \pagebreak
% % \newpage
% \newpage
 \bibliographystyle{amsplain}
 
%\bibliographystyle{plain}
%\bibliography{kirchhoff}

\end{document}